\newtheorem{definition}{Definition}
\newtheorem{theorem}{Theorem}
\newtheorem{lemma}[theorem]{Lemma}
\newtheorem{corollary}[theorem]{Corollary}
\newtheorem{remark}{Remark}
\newtheorem{proposition}{Proposition}[section]
\newtheorem{example}[theorem]{Example}
\newcommand{\R}{\mathbb{R}}
\newcommand{\Z}{\mathbb{Z}}
\newcommand{\ve}{\varepsilon}
\newcommand{\eps}{\epsilon}
\newcommand{\lo}{\left( 1+ o(1) \right)}
\newcommand{\network}{graph\xspace}
\newcommand{\networks}{graphs\xspace}
\newcommand{\networkss}{graph's\xspace}
\newcommand{\dist}{\mu}
\newcommand\given[1][]{\:#1\vert\:}
\DeclarePairedDelimiter{\defaultDelim}{[}{]}
\DeclareMathOperator{\capPr}{\sf Pr}
\renewcommand{\Pr}[2][]{\capPr_{#1}\defaultDelim*{#2}}
\DeclareMathOperator{\capE}{\sf E}
\newcommand{\E}[2][]{\capE_{#1}\defaultDelim*{#2}}
\DeclareMathOperator{\capVar}{\sf Var}
\newcommand{\Var}[2][]{\capVar_{#1}\defaultDelim*{#2}}
\newcommand{\brac}[1]{\left(#1\right)}
\newcommand{\bfrac}[2]{\left(\frac{#1}{#2}\right)}
\DeclareMathOperator*{\argmax}{arg\,max}
\begin{document}

\title{Approximating Sparse Graphs: The Random Overlapping Communities Model}
\author{Samantha Petti\thanks{Harvard University, spetti@fas.harvard.edu. This work was done while the first author was at Georgia Tech. This material is based upon work supported by the National Science Foundation Graduate Research Fellowship under Grant No. DGE-1650044.}  \and Santosh S. Vempala\thanks{Georgia Tech, vempala@gatech.edu. Both authors were supported in part by NSF awards CCF-1563838 and
CCF-1717349.}}

\maketitle

\begin{abstract} 
How can we approximate sparse graphs and sequences of sparse graphs (with  unbounded average degree)? We consider convergence in the first $k$ moments of the graph spectrum (equivalent to the numbers of closed $k$-walks) appropriately normalized.  We introduce a simple random graph model that captures the limiting spectra of many sequences of interest, including the sequence of hypercube graphs. The Random Overlapping Communities (ROC) model is specified by a distribution on pairs $(s,q)$, $s \in \mathbb{Z}_+, q \in (0,1]$. A graph on $n$ vertices with average degree $d$ is generated by repeatedly picking pairs $(s,q)$ from the distribution, adding an Erd\H{os}-R\'{e}nyi random graph of edge density $q$ on a subset of vertices chosen by including each vertex with probability $s/n$, and repeating this process so that the expected degree is $d$. 
We also show that ROC graphs exhibit an inverse relationship between degree and clustering coefficient, a characteristic of many real-world networks.
\end{abstract}


\tableofcontents

\section{Introduction}

What is a good summary of a very large graph? Besides simple statistics like its size and edge density, one would like to know the chance of finding small subgraphs (e.g., triangles), to estimate global properties (e.g., the size of the minimum or maximum cut), and to be able to produce a smaller graph of desired size with similar properties as the original. 
One approach is to define a random graph model, a simple description of a probability distribution over all graphs, such that a graph drawn from the model will likely have similar properties as the graph of interest. Since the introduction of the Erd\H{o}s-R\'enyi random graph $G_{n,p}$ \cite{Erd59, Erd60} developing and analyzing random graphs has become a rich field with many interesting emergent phenomena.

A more powerful approach to graph approximation is via Szemer\'edi's regularity lemma \cite{sze76}, which guarantees the existence of a partition the vertices of a graph into a small number of blocks such that the distribution of edges within a block and between most pairs of blocks resembles a random graph of prescribed density. 
Frieze and Kannan's weak regularity theorem \cite{Fri96} is a simpler version of Szemer\'edi's fundamental theorem which states the number of partition classes (and hence the total size of the description of the approximation) required to produce an approximation with error $\eps>0$ in the cut norm is only a function of $\eps$, independent of the size of the original graph. Applying this version of the regularity lemma produces a much smaller description, leading to to efficient approximation algorithms.
The regularity lemma's consequences are striking --- one can approximate the homomorphism density of any fixed size graph (from the left or right), the size of any cut to within additive error; the partition itself can be constructed algorithmically and is easy to sample. This has lead to effective Stochastic Block Models for large dense graphs. The closely related theory of  graph limits shows that any sequence of dense graphs has a convergent subsequence, whose limit captures the limit of homomorphism densities and normalized cuts of the graphs, and is itself a probability distribution over the unit square (called a {\em graphon}). Moreover, if two graphs are close in the cut metric, then their homomorphism densities are also close. At a qualitative level, these theorems give an essentially complete theory for the approximation of {\em dense} graphs, where the number of edges is $\Omega(n^2)$. 

Such a theory is missing for sparse graphs (with $o(n^2)$ edges). All the approximations for the dense case produce the trivial approximation of the empty graph. While there is an intricately developed theory for bounded-degree graphs that allows one to describe the limits of sequences of such graphs\footnote{One can approximate a bounded-degree graph as a distribution over local neighborhood structures, i.e., the probability that the $r$-neighborhood of a vertex is a particular graph. For any $r$, this is a finite description and it captures homomorphism densities and appears as a limit of a bounded-degree graph sequence.}, it is not algorithmically tractable, and it does not extend to graph sequences when the degree can grow with the size of the graph. Moreover, as we will presently see, the known objects for approximating dense graphs (block models, regularity decompositions, graphons) are inherently unable to approximate sparse graphs. The main motivation for this paper is to understand what properties of sparse graphs (resp. graph sequences) can be succinctly approximated and to provide a model for them. Existing theories are limited in what they can achieve for families of graphs which are neither dense nor bounded-degree. In particular, they seem unable to answer the following representative question \cite{Lov14}:\\

{\em What is a useful limiting representation of the sequence of hypercube graphs? }\\

A full limit theory for sparse graphs would include: (i) a metric that meaningfully describes the closeness of sparse graphs, (ii) a corresponding metric space on unlabelled graphs that provides a way to compare graphs of different sizes and yields a natural notion of convergence, and (iii) limit object that succinctly represents the properties of a convergent sequence and can be used to generate graphs of varying sizes that all have properties similar to the graph sequence. 
We do not achieve this lofty goal. Instead, we explore the extent to which it is possible to succinctly model one important property of sparse graphs--- closed walk and cycle counts. Our goal is to define a random graph model, a simple description of a probability distribution over all graphs, such that a graph drawn from the model will likely have the same normalized closed walk counts as a graph or graph sequence of interest. 

 We focus on approximating the simple cycle and closed walk counts of sparse graphs and graph sequences appropriately normalized. These counts encode information about the local structure of the graph and are related to its spectral properties; the number of closed $k$-walks in a graph is equal to the $k^{th}$ moment of the graph's eigenspectrum. For dense graphs, stochastic block models and graphons approximate both homomorphism densities and cut norm. However the standard cut norm is not useful for sparse graphs as the norm tends to zero. Moreover, natural normalizations do not seem to work either, i.e., they either go to zero or distinguish hypercubes of different sizes. 
 
 Another reason we focus on cycle and walk counts rather than cuts is that approximating local structure is of particular interest in practice. A widely-used technique for inferring the structure and function of a real-world \network is to observe overrepresented motifs, i.e., small subgraphs that appear frequently. Recent work describes the overrepresented motifs of a variety of \networks including transcription regulation \networks, protein-protein interaction \networks, the rat visual cortex, ecological food webs, and the internet (WWW),  \cite{Yeg04, Alo07,Son05,Mil02}. The type of overrepresented motifs has been shown to be correlated with the \networkss function \cite{Mil02}. A model that produces \networks with high motif counts is necessary for approximating \networks whose function depends on the abundance of a particular motif.

\paragraph{Limitations of previous approaches for capturing the cycle and walk counts.}

Previous approaches do not provide a meaningful way to approximate the small cycle counts of sparse graphs. A regularity style partition or stochastic block model inherently cannot approximate the number of triangles unless the rank of the block model grows nearly linearly with the size of the graph as shown in the following simple observation.

\begin{proposition}\label{4bound}
		Let $M$ be an $n \times n$ symmetric matrix with entries in $[0,1]$ such that each row sum is at most $d$. Then the expected number of simple $k$-cycles in a graph on $n$ vertices obtained by uniformly sampling\footnote{Formally, a graph $G$ on $n$ vertices is constructed by adding an edge between each pair of vertices $i$ and $j$ independently with probability $M_{ij}$.} $M$ is at most $d^k rank(M)$.
\end{proposition}
\noindent The proposition follows by observing that the expected number of simple $k$-cycles is at most the trace of $M^k$ (see formal proof in \Cref{sec:lopa}). In particular, any rank $r$ approximation of the $d$-dimensional hypercube where each vertex has degree $O(d)$ has fewer than $O(rd^4)$ simple four-cycles, whereas the hypercube has $2^{d} d^2$ of them. 

The Benjamini-Schramm local neighborhood distribution approach requires graphs' degrees to be bounded so that the resultant metric space of local profiles is compact \cite{ben01}. Other methods designed for the sparse but not bounded degree setting do not produce a satisfactory limit object for the sequence of hypercubes.  

The theory of $L^{p}$ graphons generalizes the graphon to a range of sparse settings \cite{bor14}. While the $L^{p}$ graphon gives approximations for a generalized notion of cut metric for sparse graphs, graphs sampled from the $L^p$ graphon limit of the sequence may have very different normalized subgraph counts than the sequence (i.e. no ``counting lemma" is possible). Recently Backhausz and Szegedy developed a theory of operator convergence and applied it to operators on graphs \cite{backhausz2018action}. They show that dense graphon convergence, $L_p$ graphon convergence, and local-global convergence (which implies Benjamini-Schramm  convergence) are all special cases of operator convergence arising from different graph operators.  With respsect to one notion of convergence in their framework, the limit of the sequence of hypercube graphs is a Cayley graph of $\Z_2^\infty$. In order to model the limit of the hypercube sequence, Frenkel redefined homomorphism density with a different normalization based on the size of the subgraph, but this notion does not help distinguish the limiting number of non-tree structures for sequences of graphs with degree tending to infinity \cite{fre16}.  
The recently developed notion of {\em graphex} \cite{vei16, bor17} is the limit object for sequences of sampling convergent graphs.  However, any sequence of nearly $d$-regular graphs with $d=o(n)$ is sampling convergent since the sampled object according to this notion is a set of isolated edges with high probability. Therefore the graphex cannot distinguish between different graph sequences that are nearly regular.


\paragraph{Normalizing closed walk and cycle counts.} In order to meaningfully compare the closed walk counts and cycle counts between graphs of different sizes, it is necessary to normalize the counts. For dense graphs, homomorphism density of a subgraph $H$ in a graph on $n$ vertices is the number of copies of a subgraph $H$ divided by $n^{|v(H)|}$. This normalization is natural because it gives the probability $H$ is present on a random subset of vertices. For the sparse case, this normalization causes the homomorphism density of all subgraphs tend to zero, and so we must define a different normalization.

When considering a sequence of graphs, we can find a proper normalization of the closed walk counts by looking at the rate of growth of the counts. A graph that locally looks like a $d$-ary tree has approximately $d^{k/2}$ closed $k$-walks at each vertex for $k$ even. Therefore the appropriate normalization of the closed $k$-walk counts for a sequence of such graphs is $nd^{k/2}$. We will see in \Cref{convergence examples} that this normalization is also natural for the sequence of hypercubes. A sequence of sparse graphs in which each vertex's local neighborhood is dense (e.g., a collection of $d$ cliques of size $d$), the appropriate normalization for the walk counts is $n d^{k-1}$. We define the {\em sparsity exponent} of a sequence to measure the rate of growth of the number of closed $k$-walks in the sequence.
Let $W_k(G)$  be the number of closed walks of length $k$ in a graph $G$.
\footnote{Denoting the number of closed $k$-walks by $W_k(G)$, and the eigenvalues of the adjacency matrix of $G$ as $\lambda_1(G) \ge  \lambda_2(G),\dots \ge \lambda_n(G)$, we have $W_k(G) = \sum_{i=1}^n \lambda_i^k.$}

\begin{definition} \label{cwc-defn}
	For $0 < \alpha \le 1$, we define the $\alpha$ normalized closed $k$-walk count as $$W_k(G,\alpha)= \frac{W_k(G)}{nd^{1+\alpha(k-2)}}$$ where $n=|V(G)|$ and $d$ is the average degree of $G$.
\end{definition}

\noindent Throughout we use $(G_i)$ to denote a sequence of finite graphs indexed by the positive integers. 
\begin{definition}[sparsity exponents] \label{sparsity-exponent-defn}
	Let $(G_i)$ be a sequence of graphs. Let	$$\alpha= \inf_{b \in [1/2,1]} \big\{ b \given   \lim_{i \to \infty } W_j(G_i,b)  \text{ exists for all $j$} \big\}$$ be the sparsity exponent of the sequence.
	For $k\geq 3$, let
	$$\alpha_k= \inf_{b \in [1/2,1]} \big\{ b \given   \lim_{i \to \infty } W_j(G_i,b)  \text{ exists for all $j \leq k$} \big\}$$ be the $k$-sparsity exponent of the sequence. 
\end{definition}

 We define the minimum of the sparsity exponent to be $1/2$ because all $d$-regular graphs have at least $\sf{Cat}_{k/2}$$ nd^{k/2}$ closed $k$-walks obtained from tracing trees, where $\sf{Cat_n}$$={ 2n \choose n} /(n+1)$ denotes the $n^{th}$ Catalan number.  For two sequences of graphs with matching degrees, a higher sparsity exponent indicates denser local neighborhoods and therefore more closed walks.

 While the sparsity exponent gives a natural way to normalize the closed walk counts for a sequence of sparse graphs, it does not help determine the appropriate normalization factor for approximating an individual graph (not contextualized in a sequence). When approximating an individual graph, we instead choose to focus on the number of simple $k$-cycles denoted $C_k(G)$ and normalize by the number of edges in the graph.\footnote{ In \Cref{sec:lopa} we show that the naive approach of adding cycles on random vertices does not produce graphs with high cycle-to-edge ratios}  Throughout this paper, for convenience we refer to a simple $k$-cycle as a $k$-cycle. For example, under this convention each triangle is counted 6 times because there are 6 closed walks that traverse a triangle.

 As described in the previous section, approximating $W_k(G)$ and $C_k(G)$ for sparse graphs is already out-of-reach for known methods that work well in the dense and bounded-degree settings. The main contribution of this paper is the following model which can approximate the normalized closed walk and cycle counts for a large class of graphs.

\paragraph{The Random Overlapping Communities Model.}
We introduce a simple generalization of the Erd\H{o}s-R\'enyi model that can approximate the normalized cycle and walk counts of a wide range of sparse graphs. The {\em Random Overlapping Communities (ROC)} model generates \networks that are the union of many relatively dense random communities. A {\em community} is an instance of an Erd\H{o}s-R\'enyi graph $G_{s,q}$ (or a bipartite Erd\H{o}s-R\'enyi graph $G_{s/2,s/2,q}$) on a set of $s$ randomly chosen vertices. A ROC \network is the union of many randomly selected communities that overlap, so  every vertex is a member of multiple communities.  The number, size and density of communities are drawn from a distribution.  \Cref{ROC picture} illustrates this construction.


\begin{figure}[h] 
	\centering
	\includegraphics[scale=.5]{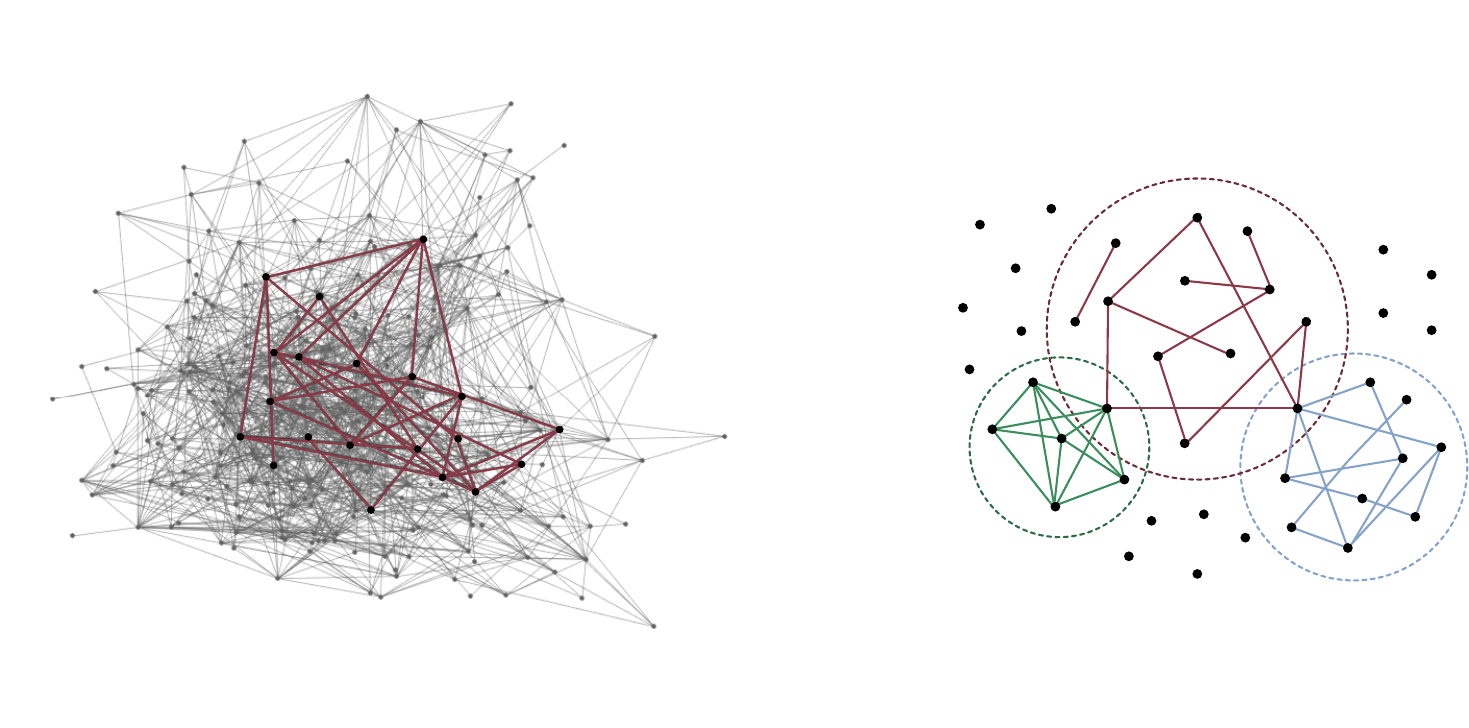}
	\vspace{-10pt}
	\caption{Left: in each step of the construction of a ROC($n,d,s,q$) \network, an instance of $G_{|S|,q}$ is added on a set $S$ of randomly selected vertices. Right: three communities of a ROC graph.}
	\label{ROC picture}
\end{figure}

The distribution $\mathcal{D}$ of a ROC family is specified by a number $a \in [0,1]$ and a distribution $\mu$ on triples $(m_i, q_i,\beta_i)$ with probability $\mu_i$ for the $i^{th}$ triple. Communities are generated by repeatedly picking a triple from the distribution $\mu$. When $\beta_i=0$, the community has expected size $s=m_id^a$ and density $q_i$. If $\beta_i=1$, indicating balanced bipartiteness, the community is defined on a bipartite graph with $m_id^a$ vertices expected in each class.  \\


\begin{centering}
	\fbox{\parbox{0.98\textwidth}{
			{\bf ROC($n,d,\mathcal{D}$)}.  \\
			\textit{Input:} number of vertices $n$, degree $d$, $\mathcal{D}= (a, \mu)$ where $a \in [0,1]$, and $\mu$ a distribution on a finite set of triples $(m_i, q_i, \beta_i)$ where $(m_i, q_i, \beta_i)$ is selected with probability $\mu_i$, $ m_i>0$, $\sum_i \mu_i =1$, $\beta_i \in \{0,1\}$, $0 \leq q_i \leq 1$, and $\max m_i d^a \leq n$. Let $B$ be the set of indices $i$ such that $\beta_i=1$ and $B^c$ be the set of indices $i$ such that $\beta_i=0$. Let  $x= 1/ \brac{\sum_{i\in B^c} \mu_i m_i^2 q_i+2\sum_{i\in B} \mu_i m_i^2 q_i}$.\\
			\\
			\textit{Output:} a \network on $n$ vertices with expected degree $d$ (when $xnd^{1-2a} $ is integer).\\
			\\
			Repeat $\lfloor xnd^{1-2a} \rfloor$ times:	
			\begin{enumerate}
				\item Randomly select a pair $(m_i, q_i, \beta_i)$ from $\dist$ with probability $\mu_i$. 
				\item If $\beta_i=0$
				\begin{enumerate}
					\item Pick a random subset $S$ of vertices (from $\{1,2,\ldots,n\}$) by selecting each vertex independently with probability $m_id^a/n$. 
					
					\item Add the random graph $G_{|S|,q_i}$ on $S$, i.e., for each pair in $S$, add the edge between them independently with probability $q_i$; if the edge already exists, do nothing.   
				\end{enumerate}
				If $\beta_i=1$
				\begin{enumerate}
					\item Pick a random subset $S$ of vertices (from $\{1,2,\ldots,n\}$) by selecting each vertex independently with probability $2m_id^a/n$. For each vertex that is in $S$ randomly assign it to either $S_1$ or $S_2$. 
					
					\item Add the bipartite random graph $G_{|S_1|,|S_2|,q_i}$ on $S$, i.e., for each pair $u \in S_1$ and $v \in S_2$, add the edge between them independently with probability $q_i$; if the edge already exists, do nothing.   
				\end{enumerate}
				
			\end{enumerate}
		
		 	\textit{Note:} For certain applications, we consider a simple version of the model in which one community type is added. For ease of notation, in this case we denote the model $ROC(n,d,s,q)$ where $s$ is the community size and $q$ is the density. Such a parameterization equivalent to $ROC(n,d, \mu, a)$ when $a=0$ and $\mu$ is a probability distribution with supported on the triple $(s,q,0)$. 
		}
	}
\end{centering}
\bigskip

The parameters $(n,d, \mathcal{D})$ are valid ROC parameters if the conditions described under {\em input} in the box hold. A ROC family $\mathcal{D}=(\mu, a)$ refers to the set of ROC models with parameters $\mu$ and $a$ and any valid $n$ and $d$. The sparsity exponent of a sequence determines the parameter $a$ of the ROC family that achieves the limit vector. 
If a vector is achievable with sparsity exponent $\alpha$ then the ROC family that achieves the vector will have parameter $a=\alpha$ unless $\alpha=1/2$ and vector is the Catalan vector ($w_j=0$ for $j$ odd and $w_j=\sf{Cat_{j/2}}$ for $j$ even, where $\sf{Cat_n}$$={ 2n \choose n} /(n+1)$ denotes the $n^{th}$ Catalan number.) In this case any ROC family with $a<1/2$ achieves the vector. 

\begin{remark}
	The ROC model defines a probability distribution over multi-graphs since it is possible for the same edge to be placed multiple times. In a ROC graph with $\Theta(nd)$ edges, the expected number of multi-edges is $\Theta(d^2)$.
\end{remark}

\begin{remark}
In this paper we will study the behavior of the ROC model as $n$ and $d$ tend to infinity. For this reason we ignore the floor function in our asymptotic computations. 
\end{remark}

\paragraph{ROC as a model for real-world networks.} 
We focus on approximating walk and cycle counts with the ROC model in the main text. In \Cref{sec:rw} we demonstrate that the ROC model may be of further use in real-world contexts. The model can be tuned to exhibit high clustering coefficient (the probability two randomly selected neighbor of a random vertex are adjacent). Moreover, ROC graphs exhibit an inverse relationship between degree and clustering coefficient, a phenomenon often observed in real-world network (\Cref{sec:cc}). In addition, we introduce a variant of the model that produces graphs with varied degree distributions (\Cref{sec:droc}). Finally, we compare the ROC model to existing models used in practice and discuss the utility of the model in practice (\Cref{other models,sec:rw dis}).

\paragraph{Comparison to block models.} 
Mixed membership stochastic block models have traditionally been applied in settings with overlapping communities  \cite{Air08}, \cite{Kar11}, \cite{Air06}. The ROC model differs in two key ways. First, unlike low-rank mixed membership stochastic block models, the ROC model can produce sparse \networks with high triangle and four-cycle ratios. As discussed in the introduction, the over-representation of particular motifs in a \network is thought to be fundamental for its function, and therefore modeling this aspect of local structure is important. Second, in both stochastic block models and the community affiliation graphs studied in \cite{yang2012community} (which exhibit overlapping communities), the size and density of each community and the density between communities are all specified by the model. 
As a result, the size of the model must grow with the number of communities, but the ROC model maintains a succinct description even as the number of communities grows. This observation suggests that the ROC model may be better suited for \networks in which there are many communities that are similar in structure, whereas the stochastic block model is better suited for \networks with a small number of communities with fundamentally different structures. In \Cref{other models} we  compare ROC and its extension that allows for varied degree distributions to more random graph models. 

\paragraph{Notation.} Throughout the paper, $C_k(G)$ denotes the number of simple cycles in a graph $G$, and $W_k(G)$ denotes the number of closed walks in a $G$. The lowercase $c_k$ and $w_k$ refer to normalized simple cycle and closed walk counts respectively. The normalization is context dependent. We use $(G_i)$ to denote a sequence of finite graphs indexed by the positive integers. Finally $\sf{Cat_n}$$={ 2n \choose n} /(n+1)$ denotes the $n^{th}$ Catalan number. 

\paragraph{Organization.} The paper has two main objectives: to show that ROC is an effective approximation for individual sparse graphs (\Cref{ratios} and \Cref{sec:rw}) and to develop a notion of convergence for which the ROC model approximates the limit (\Cref{convergence examples,sec:approx,achievable vectors}). We end with a discussion of limitations of the model, possible extensions, and open questions (\Cref{discussion}). In the remainder of this section we summarize the results. 

\subsection{Approximating $k$-cycle-to-edge ratios}\label{ratios}


First we give conditions for when a vector of $k$-cycle-to-edge ratios can be achieved by ROC family and show that almost all pairs of triangle-to-edge and four-cycle-to-edge ratio can be approximated with a ROC graph with one community type. 

The expected simple cycle counts of a ROC graph are the moments of a distribution determined by the ROC parameters. Therefore, determining which vectors of $k$-cycle-to-edge ratios can be achieved by ROC families is closely related to the the following variants of the Stieltjes' moment problem \cite{stieltjes1894recherche}: given a sequence (or truncated sequence) determine whether there exists a discrete distribution with finite positive support with that moment sequence (or truncated moment sequence)?  The solution is characterized by the definition below (see \Cref{stieltjes,stieltjes classic}).

\begin{definition} [Stieltjes conditions] \label{s con}
	The Hankel matrices of a sequence $\mu$ are $$ H_{2s}^{(0)}=
	\begin{pmatrix} 
	\mu_0 & \mu_1 & \dots && \mu_s \\
	\mu_1 &&&&\\
	\vdots && \ddots && \vdots \\
	&&&&\\
	\mu_s && \dots && \mu_{2s} 
	\end{pmatrix}
	\quad
	\text{  and  }
	\quad
	H_{2s+1}^{(1)}=
	\begin{pmatrix} 
	\mu_1 & \mu_2 & \dots && \mu_{s+1} \\
	\mu_2 &&&&\\
	\vdots && \ddots && \vdots \\
	&&&&\\
	\mu_{s+1} && \dots && \mu_{2s+1} 
	\end{pmatrix}.
	$$
	
	\begin{enumerate}
		\item 	The vector $\mu=(\mu_0, \mu_1, \dots \mu_n)$ satisfies the truncated Stieltjes condition if 
		\begin{itemize}
			\item for $n= 2j+1$ odd, $H_{2j}^{(0)}\succeq 0$, $H_{2j+1}^{(1)} \succeq 0$, and $(\mu_{j+1}, \mu_{j+2}, \dots \mu_{2j+1}) \in $ ${\sf ColSpace}$ $H_{2j}^{(0)}$
			\item for $n= 2j$ even, $H_{2j}^{(0)}\succeq 0$, $H_{2j-1}^{(1)} \succeq 0$, and $(\mu_{j+1}, \mu_{j+2}, \dots \mu_{2j}) \in $ ${\sf ColSpace}$ $H_{2j-1}^{(0)}$,
		\end{itemize}
		where ${\sf ColSpace}$ denotes the span of the columns of the matrix. 
		
		\item The infinite vector $\mu=(\mu_0, \mu_1, \dots )$ satisfies the full Stieltjes condition with parameter $k$ (for $k \in \Z^+$) if  
		\begin{equation*}
		det \brac{H_{2s}^{(0)}} > 0  \quad  \text{ and } \quad  
		det \brac{H_{2s+1}^{(1)}} > 0 \text{ for all } s<k,
		\end{equation*}
		and 
		$$det \brac{H_{2s}^{(0)}} = 0  \quad  \text{ and } \quad  
		det \brac{H_{2s+1}^{(1)}} = 0 \text{ for all } s\geq k.$$
	\end{enumerate}

\end{definition}

\begin{lemma}[Cycle Ratio Achievability]\label{thm:kratios}
	There exists a ROC family such that for $G\sim  ROC(n,d,\mathcal{D})$ with $d=o \brac{n^{1/(k-1)}}$  
	$$\lim_{n \to \infty} \frac{2 \E {C_j(G)}}{nd}= c_j \quad \text{ for } 3 \leq j \leq k$$ if and only if there exists $ \gamma \in [0,1]$, $s_0, s_1, s_2, \dots s_k, t_0, t_2, \dots t_{2 \lfloor \frac{k}{2} \rfloor}  \in \R^{+}$, $s_2, t_2 \leq 1$ such that $(s_0, s_1, s_2,\dots s_k)$ and  $(t_0, t_2, \dots t_{2 \lfloor \frac{k}{2} \rfloor} )$ satisfy the truncated Stieltjes condition and for $3 \leq j \leq k$
	$$c_j/2 = \begin{cases} \gamma s_j & j \text{ odd}\\ \gamma s_j +(1-\gamma) t_j & j \text{ even}. \end{cases} $$ 
\end{lemma}

We can use this lemma to show that for almost all pairs of triangle-to-edge and four-cycle-to-edge ratios arising from some \network, there exists a ROC family that produces graphs with the matching cycle-to-edge ratios {\em simultaneously}. Instead, we directly prove a stronger statement (\Cref{real network bounds}), which gives an explicit construction of the ROC family with a single community size $s$ and density $q$ that produces \networks with the pair of ratios. Moreover, we show that the vanishing set of triangle and four-cycle ratio pairs not achievable exactly can be approximated to within a small error.

\begin{theorem}\label{real network bounds}
	Let $H$ be a graph.
	\begin{enumerate} 
		\item  Let $c_i=C_i(H)/|E(H)|$ for $i=3,4$. Then $c_3(c_3/2-1) \le c_4$. 
		\item \label{match} For any $c_3$ and $c_4$ such that $c_3^2\le 2c_4$, and $d= o(n^{1/3})$, the random \network\\
		$G \sim ROC\left(n,d, s,q\right)$ where $s=\frac{2c_4^2}{c_3^3}$ and $q=\frac{c_3^2}{2c_4}$  has $$\lim_{n \to \infty} \frac{2\E{C_3(G)}}{nd}= c_3 \quad \text{ and } \quad  \lim_{n \to \infty} \frac{2\E {C_4(G)}}{nd}= c_4.$$
	\end{enumerate}
\end{theorem}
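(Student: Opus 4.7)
The proof splits naturally into the two parts of the theorem.

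\textbf{Part 1.} I would argue by double-counting pairs of triangles that share an edge. Let $t$ and $f$ denote the numbers of triangles and simple 4-cycles in $H$, and for each edge $e$ let $t(e)$ be the number of triangles containing $e$, so that $\sum_e t(e)=3t$. An unordered pair of distinct triangles $uvw_1, uvw_2$ sharing the edge $uv$ determines a 4-cycle $u\,w_1\,v\,w_2$ in which $uv$ is a diagonal. Since every 4-cycle has exactly two diagonals, each 4-cycle is produced in this way at most twice, yielding $\sum_e \binom{t(e)}{2}\le 2f$. Applying Jensen's inequality to the convex function $x\mapsto \binom{x}{2}$ gives $\sum_e \binom{t(e)}{2}\ge |E(H)|\,\binom{3t/|E(H)|}{2}$. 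Translating both inequalities into the normalization $c_3=6t/|E(H)|$, $c_4=8f/|E(H)|$ (each triangle counted $6$ times as closed walks) reduces precisely to $c_3(c_3/2-1)\le c_4$.

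\textbf{Part 2.} For the ROC graph $G$, I would compute the expected edge, triangle, and 4-cycle counts by linearity of expectation, separating cycles that lie entirely within a single community (call their counts $C_k^{\text{in}}(G)$) from those spanning multiple communities. A single community is a copy of $G_{s,q}$ on a random subset of expected size $s$, contributing in expectation $\binom{s}{2}q$ edges, $\binom{s}{3}q^3$ triangles, and $3\binom{s}{4}q^4$ simple 4-cycles. Matching the total expected degree to $d$ forces $N=(1+o(1))\,nd/(s^2 q)$ communities. Summing and using the convention $C_k=2k\cdot(\text{simple $k$-cycles})$, the in-community contributions satisfy
\[
\frac{2\,\E{C_3^{\text{in}}(G)}}{nd}\to 2sq^2,\qquad \frac{2\,\E{C_4^{\text{in}}(G)}}{nd}\to 2s^2q^3.
\]
A direct substitution then verifies that $s=2c_4^2/c_3^3$ and $q=c_3^2/(2c_4)$ satisfy $2sq^2=c_3$ and $2s^2q^3=c_4$.

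\textbf{Main obstacle: cross-community cycles.} The technical heart of the argument is bounding cycles whose edges come from distinct communities. Each edge has marginal probability $\Theta(d/n)$, and the events corresponding to different communities are independent. Enumerating triples $(A_1,A_2,A_3)$ of community indices supplying the three edges of a potential triangle, split by how many of the indices coincide, shows that the expected number of triangles using at least two distinct communities is $O(d^2 sq + d^3)$, which is $o(nd)$ whenever $d=o(\sqrt{n})$. The analogous enumeration for 4-cycles (four indices $A_1,\dots,A_4$) produces a cross-community contribution of order $O(d^4)$, and demanding this be $o(nd)$ is exactly the hypothesis $d=o(n^{1/3})$. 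Thus the 4-cycle calculation is the binding constraint, and combining the in-community asymptotics above with these negligible cross-community terms completes the proof.
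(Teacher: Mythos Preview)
Your proof is correct and follows essentially the same approach as the paper. Part~1 is identical: the paper also counts, for each edge $e$, the pairs of triangles through $e$, observes that each such pair yields a 4-cycle with $e$ as a diagonal (hence each 4-cycle is counted at most twice), and then minimizes $\sum_e \binom{t_e}{2}$ by taking all $t_e$ equal. For Part~2 the paper simply invokes the lemma $R_k=\lim_{n\to\infty}2\E{C_k(G)}/(nd)=2s^{k-2}q^{k-1}$ for $d=o(n^{1/(k-1)})$ and substitutes; your in-community computation together with the cross-community bound is exactly a direct proof of that lemma for $k=3,4$, with the $O(d^4)$ term from four distinct communities being the binding case that forces $d=o(n^{1/3})$.
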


Here we outline the proof of \Cref{real network bounds} Statement 2 to give further intuition for the ROC model and the proof strategies to come. (See \Cref{4 lim} for the full proof.) The first ingredient in many of our proofs is computing the expected number of closed cycles and walks in ROC graphs (\Cref{sec:approx}). A special case \Cref{exp by class cor} describes the cycle-to-edge ratio as a simple expression of community size $s$ and density $q$. For 
$G \sim ROC(n,d,s,q)$,  \begin{equation} \label{counts}
c_k=\lim_{n \to \infty} \frac{2\E{C_k(G)}}{nd}= 2s^{k-2}q^{k-1} \mbox{ for } d=o(n^{1/(k-1)}).
\end{equation}

Note that by varying $s$ and $q$, we can construct a ROC \network that achieves any ratio of triangles to edges or any ratio of four-cycles to edges. Moreover, it is possible to achieve a given ratio by larger, sparser communities or by smaller, denser communities. For example communities of size 50 with internal density 1 produce the same triangle ratio as communities of size 5000 with internal density 1/10.

When $c_3^2 \leq 2c_4$ we can find $s$ and $q\leq 1$ satisfying \Cref{counts} for $k=3$ and $k=4$ simultaneously.  For every \network with ratios in the narrow range $c_3(c_3/2-1) \le c_4 \le c_3^2/2$, there exists a ROC construction that matches $c_3$ and can approximate $c_4$ by $c_3^2/4$, i.e., up to an additive error $c_3/8$ (or multiplicative error of at most $1/(c_3/2 -1)$ which goes to zero as $c_3$ increases).

\subsection{The Walk Count Achievability Lemma} \label{achievability-lemma}

The walk count achievability lemma characterizes whether a vector can be realized as limiting normalized closed walk counts of ROC graphs. 
To state our results, we first define the convergence of sparse graph sequences and their limits. We consider convergence first for each $k$ and then for all  positive integers $k$, referring to the latter as full convergence. 
Recall the definitions of sparsity exponent and closed walk counts (\Cref{cwc-defn,sparsity-exponent-defn}).

\begin{definition}[$k$-convergent] 
	Let $(G_i)$ be a sequence of graphs with $k$-sparsity exponent $\alpha_k$. The sequence $(G_i)$ is $k$-convergent if $\lim_{i \to \infty } W_j(G_i,\alpha_k)$ exists for all $j\leq k$. We let $w_j= \lim_{i \to \infty } W_j(G_i, \alpha_k)$ and say  $(w_3, w_4, \dots, w_k)$  is the $k$-limit of the graph sequence $(G_i)$.  
\end{definition}

\begin{definition}[fully convergent]
	Let $(G_i)$ be a sequence of graphs with sparsity exponent $\alpha$. We say the sequence is fully convergent if $\lim_{i \to \infty } W_j(G_i,\alpha)$ exists for all $j$. We let $w_j= \lim_{i \to \infty } W_j(G_i,\alpha)$ and say  $(w_3, w_4, \dots)$  is the limit of the graph sequence $(G_i)$.  
\end{definition}

Informally, we say that a ROC family (a distribution on triples) achieves the limit of a convergent sequence of graphs $(G_i)$ if the  normalized expected number of walks in a graph drawn from the ROC family matches the limit of $(G_i)$.  We use {\em achievable} to describe when a ROC family realizes a $k$-limit, {\em fully achievable} to describe when a ROC family realizes a limit, and {\em totally $k$-achievable} to describe the weaker notion that any subsequence of a limit is achievable by a ROC family.

\begin{definition}[$k$-achievable, totally $k$-achievable, fully achievable]\label{achievability defn}\quad
	\begin{enumerate}
		\item The $k$-limit $(w_3, w_4, \dots w_k)$ of a sequence of graphs with sparsity exponent $\alpha$ is {\em $k$-achievable by ROC} if there exists a ROC family $\mathcal{D}$ such that for all $3 \leq j \leq k$, when $d \to \infty$ and $d=o \brac{n^{1/((1-a)k+2a-1)}}$ 
		$$\lim_{n \to \infty} \frac{ \E { W_j\brac{ROC(n,d,\mathcal{D})}}}{nd^{1+\alpha(j-2)}}=w_j.$$ 
	
			\item  The limit of a sequence of graphs {\em totally $k$-achievable by ROC} if every $k$-limit of the sequence is achievable (possibly with a different choice for each $k$).
		
		\item 	The limit $(w_3, w_4, \dots)$ of a sequence of graphs with sparsity exponent $\alpha$ is {\em fully achievable by ROC} if there exists a ROC family $\mathcal{D}$ such that for all $j \geq 3$,
		$$\lim_{n \to \infty} \frac{ \E { W_j \brac{ ROC(n,d,\mathcal{D}) } }}{nd^{1+\alpha(j-2)}}=w_j$$
		whenever (i) $a<1$, $d \to \infty$ and $d=o\brac{n^\ve}$ for all $\ve>0$ or  (ii) $a=1$ and $d= o(n)$.

	\end{enumerate}
\end{definition}

\noindent Roughly speaking, the degree upper bounds ensure that the overwhelming majority of simple cycles are contained entirely in single communities. 
We justify the normalization with respect to  {\em expected} degree in the above definition by showing that the probability that the normalized closed walk counts of $G \sim ROC(n,d,\mathcal{D})$ deviate from the family's limit vanishes as $d \to \infty$ (\Cref{deviate} of \Cref{roc sec sec}). Moreover \Cref{roc seq} gives conditions on $n_i$ and $d_i$ which guarantee that a sequence $(G_i)$ with $G_i \sim ROC(n_i, d_i, \mathcal{D})$ almost surely converges to limit vector achieved by the family.


In ROC families that produce sequences of graphs with sparsity exponent greater than $1/2$, the counts of simple cycles dominate the total closed walk counts. Therefore, a limit is achievable when it is possible to construct a ROC family with normalized simple cycle counts that match the desired normalized closed walk counts. The cycle counts are dominated by the cycles contained entirely in one community. Every community contributes even cycles, but only the non-bipartite communities contribute to the  odd cycle counts. In the following lemma, the parameters $s_i$ and $t_i$ count the number of simple $i$-cycles in non-bipartite and bipartite communities respectively, and the parameter $\gamma$ indicates the expected fraction of communities that are non-bipartite.

Approximating sequences with sparsity exponent $1/2$ is more complicated because the number of simple cycles can be of the same order as the number of closed walks that are not simple cycles in ROC families that produce graph sequences with sparsity exponent $1/2$.
In  \Cref{polynomials} we prove that the polynomial $T$ given in \Cref{relationship} describes the relationship between simple cycle counts and closed walks counts in ROC graphs. Moreover we show that $T$ describes the relationship between simple cycle counts and closed walk counts in {\em locally regular} graphs in which each vertex is in the same number of cycles.  A limit with sparsity exponent 1/2 is achievable when it is possible to construct a ROC family with normalized simple cycle counts that match the inverse  of this  polynomial $T$ applied to the desired normalized closed walk counts.

\begin{lemma}[Walk Count Achievability]\label{ach} \quad
	\begin{enumerate} 
		\item \label{more than half} \emph{(Sparsity exponent $>1/2$)}
		A limit vector $(w_3, w_4, \dots w_k)$ is achievable by ROC with sparsity exponent greater than $1/2$ if and only if there exists $ \gamma \in [0,1]$, $s_0, s_1,  \dots s_k, t_0,$ $ t_2, \dots t_{2 \lfloor \frac{k}{2} \rfloor}  \in \R^{+}$, $s_2, t_2 \leq 1$ such that $(s_0, s_1, s_2,\dots s_k)$ and  $(t_0, t_2, \dots t_{2 \lfloor \frac{k}{2} \rfloor} )$ satisfy the truncated Stieltjes condition and for $3 \leq j \leq k$
		$$w_j = \begin{cases} \gamma s_j & j \text{ odd}\\ \gamma s_j +(1-\gamma) t_j & j \text{ even}. \end{cases} $$ 


		\item \label{half} \emph{(Sparsity exponent $1/2$)}
		Let $T((c_3, c_4, \dots c_k))=(w_3, w_4, \dots w_k)$ be the transformation of a vector given in \Cref{relationship}. The limit vector $(w_3, w_4, \dots w_k)$ is achievable by ROC with sparsity exponent $1/2$ if and only if there exists $ \gamma \in [0,1]$, $s_0, s_1, s_2, \dots s_k, t_0, t_2, \dots t_{2 \lfloor \frac{k}{2} \rfloor}  \in \R^{+}$, $s_2, t_2 \leq 1$ such that $(s_0, s_1, s_2,\dots s_k)$ and  $(t_0, t_2, \dots t_{2 \lfloor \frac{k}{2} \rfloor} )$ satisfy the truncated Stieltjes condition and for $3 \leq j \leq k$
		$$c_j = \begin{cases} \gamma s_j & j \text{ odd}\\ \gamma s_j +(1-\gamma) t_j & j \text{ even}. \end{cases} $$ 
	\end{enumerate}
\end{lemma}


The analogous theorem for the full achievability of limits by ROC require the full Stieltjes condition. See \Cref{full-ach} in \Cref{achieve con}. 
We prove  \Cref{ach,full-ach,thm:kratios} in \Cref{achieve con}

\begin{remark} In the proofs of \Cref{ach,thm:kratios} (given in \Cref{achieve con}), we show that scale of the community sizes needed to match normalized closed walks and cycle-to-edge ratios differ substantially. A ROC family that achieves a normalized closed walk count limit must be parameterized so that the community sizes grow with $d^a$ for some constant $a \in [1/2,1]$ that depends on the sparsity exponent. In contrast, a ROC family that approximates a vector of $k$-cycle-to-edge ratios will have constant community sizes.
\end{remark}

\subsection{Applications of the Walk Count Achievability Lemma to sparse graph sequences.}

We apply the Walk Count Achievability Lemma to show that the ROC model captures the closed walk limits for several interesting sequences of graphs. We begin with the limit of the sequence of hypercube graphs, answering the question raised by \cite{Lov14}.

\begin{theorem}\label{h1}
The limit of the sequence of hypercube graphs is totally $k$-achievable by ROC.
\end{theorem}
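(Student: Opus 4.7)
The plan has three parts: identify the hypercube sequence's $k$-limit, construct a ROC family realizing it, and control the contribution of walks that cross multiple communities.

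First, I would compute the limit. The eigenvalues of $Q_d$ are $d-2i$ with multiplicity $\binom{d}{i}$, so
\[
W_j(Q_d) \;=\; \sum_{i=0}^{d}\binom{d}{i}(d-2i)^j \;=\; 2^d\,\mathbb{E}[(d-2B)^j], \qquad B \sim \mathrm{Binomial}(d,1/2).
\]
Since $d-2B$ has mean zero and variance $d$, the central limit theorem together with uniform integrability of low-degree polynomial moments gives
\[
\frac{W_j(Q_d)}{2^d\,d^{j/2}} \;\longrightarrow\; (j-1)!! \text{ if $j$ is even}, \qquad 0 \text{ if $j$ is odd}.
\]
With $n=2^d$ and average degree $d$, this identifies the sparsity exponent of $(Q_d)$ as $\alpha=1/2$ for every $k \geq 3$ and gives the $k$-limit $w_j = (j-1)!!$ on even $j \leq k$ and $w_j = 0$ on odd $j \leq k$.

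Next, for each fixed $k$ I would build a ROC family hitting these Gaussian moments. To force the odd walks to vanish I use bipartite communities; the cleanest choice is the triple $(s,q,b) = (2a,1,a)$, which drops a copy of $K_{a,a}$ whose eigenvalues are $\pm a$ together with $2a-2$ zeros, so $\mathbb{E}[W_j(K_{a,a})] = 2a^j$ for even $j$ and $0$ for odd $j$. Take $a = \lceil \sqrt{d}\,Y \rceil$ with $Y$ drawn from a probability distribution $\nu$ on finitely many positive reals, and set the number of communities to $T = nd/(2\mathbb{E}_\nu[a^2])$ so that the expected average degree of the ROC graph equals $d$. The single-community contribution to $\mathbb{E}[W_j(\mathrm{ROC}(n,d,\mathcal{D}_k))]$ is then
\[
T \cdot 2\,\mathbb{E}_\nu[a^j] \;=\; n\,d^{j/2}\,\frac{\mathbb{E}_\nu[Y^j]}{\mathbb{E}_\nu[Y^2]}\,(1+o(1)),
\]
so after dividing by $nd^{j/2}$ it suffices to choose $\nu$ so that $\mathbb{E}_\nu[Y^j]/\mathbb{E}_\nu[Y^2] = (j-1)!!$ for every even $j \leq k$. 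Normalizing $\mathbb{E}_\nu[Y^2] = 1$, this is the truncated Stieltjes moment problem with solution $|Z|$ for $Z \sim N(0,1)$, and standard Gauss quadrature produces a probability distribution on $O(k)$ positive points realizing the same first $k$ moments.

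The main obstacle is controlling walks that use edges from two or more distinct communities. I would bound the expected contribution of such walks by a combinatorial sum over walk decompositions, where each transition between two communities costs an extra factor of order $s/n = O(\sqrt{d}/n)$ (the probability that a given vertex lies in both communities). Under the theorem's degree hypothesis $d = o(n^{1/((1-\alpha)k+2\alpha-1)}) = o(n^{2/k})$ at $\alpha = 1/2$, every multi-community configuration is of strictly smaller order than the single-community scale $nd^{j/2}$, so the normalized walk counts converge to $w_j$ for every $j \leq k$. Combined with the computation of the previous paragraph, this shows that the hypercube limit is totally $k$-achievable by ROC.
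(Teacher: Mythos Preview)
Your identification of the hypercube limit and the idea of using bipartite communities of size $\Theta(\sqrt{d})$ are both correct and match the paper. The fatal gap is in the last paragraph: for sparsity exponent $\alpha=1/2$, walks that visit several communities are \emph{not} lower order. A vertex $v$ lies in $\Theta(\sqrt{d})$ communities, so a closed $k$-walk at $v$ can string together cycles from different communities, and each such configuration contributes at the full scale $nd^{k/2}$. Concretely, for $k=4$ the walk $v\to u_1\to v\to u_2\to v$ with $(v,u_1)$ in one community and $(v,u_2)$ in another gives $\Theta(nd^2)$ walks that your single-community sum $T\cdot W_4(K_{a,a})$ misses. More generally, in the paper's Lemma~\ref{exp by class} the contribution of a walk whose cycle permutation has $t$ cycles is $\Theta\!\big(nd^{(1-2a)t+ak}\big)$, which for $a=1/2$ is $\Theta(nd^{k/2})$ for \emph{every} $t$.

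Because of this, the paper does not match the walk counts $(j-1)!!$ directly to community moments. Instead it inverts the cycle--walk transform $T$ of Definition~\ref{relationship}: one must realise the \emph{cycle} vector $(c_3,c_4,\dots,c_k)=T^{-1}(w_3,\dots,w_k)$ as moments, and for the hypercube this vector is $c_{2j}=s_j$ where $s_j$ counts irreducible link diagrams (OEIS A000699), not the Gaussian moments. For instance $w_4=3$ forces $c_4=w_4-2=1$, whereas your construction with $\mathbb{E}[Y^4]/\mathbb{E}[Y^2]=3$ would produce $w_4=2+3=5$. The remaining work --- showing that $(s_1,s_2,\dots,s_{\lfloor k/2\rfloor})$ satisfies the truncated Stieltjes condition --- is what the paper carries out via the log-convexity estimates of Lemma~\ref{stein and everett} and Lemma~\ref{char}.
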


To prove theorem, we first compute the closed $k$-walk limit of the hypercube sequence (\Cref{hypercube}). The most challenging part of the proof is establishing that the $k$-cycle limit satisfies the Stieltjes condition described in the Walk Count Achievability Lemma for sparsity exponent $1/2$ (\Cref{ach}). To do so, we establish a simple criterion that describes when a vector can be extended in a way that satisfies the truncated Stieltjes condition (\Cref{char}). (This criterion may be of independent interest.) Then we derive a recursive formula for the limiting $k$-cycle counts of the hypercube sequence (\Cref{cube cycles}) and show that this sequence satisfies our simple criterion (\Cref{s prop}). \Cref{h1} generalizes to sequences of Hamming cubes and Cayley graphs of $(\Z \mod k\Z)^d$ (\Cref{hc generalizations} of \Cref{gen hyper}). These sequences have the same limit as the hypercube sequence and therefore are achieved by the same ROC family. 

The next theorem is about a sequence of strongly regular graphs called rook's graphs (the Cartesian product of two complete graphs, see \Cref{rook}).  We prove \Cref{r1} in \Cref{sec:hc and rook} by giving a parameterization of the ROC model that achieves the limit.

\begin{theorem}\label{r1} 
The limit of the sequence of rook's graphs is fully achievable by ROC. 
\end{theorem}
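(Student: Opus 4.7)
The plan is to first compute the $k$-limits $w_k$ of the rook's graph sequence from its spectrum, and then exhibit a single ROC family that realizes all of them simultaneously via the achievability framework established in \Cref{sec:approx,achievable vectors}.

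For the spectral step, I will use that $R_n = K_n \square K_n$ has eigenvalues given by pairwise sums of the eigenvalues of $K_n$; since $K_n$ has spectrum $\{n-1,\,-1,\dots,-1\}$, this gives that $R_n$ has eigenvalues $2(n-1),\, n-2,\, -2$ with multiplicities $1,\, 2(n-1),\, (n-1)^2$ respectively. Substituting into the trace formula yields
\[
W_k(R_n) = (2(n-1))^k + 2(n-1)(n-2)^k + (n-1)^2(-2)^k,
\]
whose leading term for $k \geq 3$ is $2(n-1)(n-2)^k \sim 2 n^{k+1}$. With $|V(R_n)| = n^2$ and average degree $d = 2(n-1) \sim 2n$, one checks directly that $W_k(R_n)/(n^2 d^{1+\alpha(k-2)}) \sim 2^{2-k}\, n^{(k-2)(1-\alpha)}$, which has a finite nonzero limit for every $k \geq 3$ precisely when $\alpha = 1$. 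The sparsity exponent is therefore $\alpha = 1$ and the full-limit vector is $w_k = 2^{2-k}$ for every $k \geq 3$.

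For the ROC step, observe that $w_k = 4 \cdot (1/2)^k$ is, up to a multiplicative constant, the $k$-th moment of the atomic measure $4\,\delta_{1/2}$ on $(0,1]$, so the Stieltjes moment condition underlying the convergence theory in \Cref{sec:approx} is trivially satisfied. I will then use the correspondence from \Cref{achievable vectors} between admissible limiting measures and ROC families to exhibit a concrete $\mathcal{D}$ encoding the single atom at $1/2$ by a triple $(s,q,b)$ whose normalized top eigenvalue is $1/2$; this mirrors the fact that each vertex of $R_n$ lies in exactly two dense cliques of size $\sim d/2$.

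Finally, I will invoke the main achievability theorem of \Cref{sec:approx} to conclude that $\E[W_k(\mathrm{ROC}(n,d,\mathcal{D}))]/(nd^{k-1}) \to w_k$ under the joint limit $n \to \infty$, $d \to \infty$, $d = o(n)$ forced by $\alpha = 1$. The main obstacle I anticipate is upgrading per-$k$ convergence to the simultaneous convergence for all $k$ that distinguishes full achievability from the weaker total $k$-achievability in \Cref{h1}: this requires uniform-in-$k$ control on the contribution of closed walks that traverse more than one community. Because the limiting measure is a single atom in the compact interval $(0,1]$, the moment generating estimates of \Cref{sec:approx} are uniformly bounded in $k$, and combined with the degree constraint $d = o(n)$ this bounds the multi-community tail by $o(nd^{k-1})$ simultaneously in $k$, giving the full achievability claim.
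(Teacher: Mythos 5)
Your proof is essentially the same as the paper's: compute the spectrum of the rook's graph, use the trace formula to get $W_k(R_n)$, identify the sparsity exponent $\alpha=1$ and the limit $w_k = 2^{2-k}$, then realize this vector via a ROC family with a single atom at $1/2$ (in the paper's parameterization, $a=1$, $m=1/2$, $q=1$). Your route to the spectrum via the Cartesian-product eigenvalue rule is equivalent to the paper's use of the strongly-regular-graph eigenvalue formula; both immediately give the three eigenvalues $2(n-1)$, $n-2$, $-2$ with the stated multiplicities. One small remark on your final paragraph: the distinction between full achievability and total $k$-achievability is not a matter of uniform-in-$k$ convergence rates but simply of whether a single ROC family $\mathcal{D}$ can produce the right expected moment for every $j$ separately — the hypercube fails full achievability because $(j-1)!!$ grows faster than $x(2m)^j$ for any fixed bounded community size $m$, not because of a tail-control issue. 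For the rook sequence, the vector $w_j = 2^{2-j}$ is exactly the normalized moment sequence of a single-atom measure, so \Cref{polynomials} (case $a=1$) immediately gives full achievability without any further uniformity argument; your observation that each vertex of $R_n$ lies in exactly two cliques of size $\sim d/2$ already gives the right $\mathcal{D}$.
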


In \Cref{sec:random seq} we define convergence for sequences of random graphs, and show that the limit of some sequences of  Erd\H{o}s-R\'{e}nyi random graphs (\Cref{er ex}), cannot be achieved exactly by ROC familes, but they can be approximated to arbitrarily small error.

The following theorem establishes that all $4$-limits can be achieved by a ROC model (proved in \Cref{4 lim}). However not all $k$-limits can be achieved. In \Cref{not achievable} we give an example of a graph sequence with a 6-limit that cannot be achieved by a ROC family. 

\begin{theorem}\label{three four}
	 The limit $(w_3, w_4)$ of any convergent sequence of graphs with increasing degree is achieved by a ROC family.
\end{theorem}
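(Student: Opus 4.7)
My strategy is to distill the necessary constraints on the $4$-limit $(w_3, w_4)$ of a convergent graph sequence and then exhibit explicit ROC parameters realizing any such pair via \Cref{more than half} or \Cref{half}. Two essential constraints hold: non-negativity $w_3, w_4 \ge 0$ (immediate from $W_3 = 6\cdot(\text{number of triangles}) \ge 0$ and $W_4 = \sum_j \lambda_j^4 \ge 0$) and a Cauchy--Schwarz bound on eigenvalues,
\[
W_3(G)^2 = \Bigl(\sum_j \lambda_j(G) \cdot \lambda_j(G)^2\Bigr)^2 \le \Bigl(\sum_j \lambda_j(G)^2\Bigr) \Bigl(\sum_j \lambda_j(G)^4\Bigr) = nd \cdot W_4(G),
\]
which, after dividing by $(nd^{1+\alpha_4})^2$ and passing to the limit, yields $w_3^2 \le w_4$.

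For $\alpha_4 > 1/2$ I would apply \Cref{more than half} with a single community type. If $w_3 > 0$, set $\gamma = 1$ and take $s_i = c a^i$ with $a = w_4/w_3$ and $c = w_3^4/w_4^3$: this is the moment sequence of a single atom, so the Stieltjes condition is automatic, $s_3 = w_3$, $s_4 = w_4$, and $s_2 = w_3^2/w_4 \le 1$ by the Cauchy--Schwarz bound. If $w_3 = 0$, set $\gamma = 0$ and realize $t_4 = w_4$ by a single atom at $\sqrt{w_4}$ of weight $1$ when $w_4 < 1$ (giving $t_2 = \sqrt{w_4} \le 1$) or by two atoms at $\{0, w_4\}$ with weights $\{1 - 1/w_4,\; 1/w_4\}$ when $w_4 \ge 1$ (giving $t_2 = 1$); both choices make the single nontrivial Hankel determinant $t_0 t_4 - t_2^2$ non-negative.

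For $\alpha_4 = 1/2$ I would apply \Cref{half}: first invert the polynomial transformation $T$ of \Cref{relationship} to obtain the normalized simple-cycle limits $(c_3, c_4)$ from $(w_3, w_4)$ (so $c_3 = w_3$ and $c_4 = w_4$ minus a tree-walk offset, since $T$ adds lower-order corrections from closed walks that are not simple cycles), then realize $(c_3, c_4)$ by a two-type ROC family with mixing weight $\gamma \in (0, 1)$: the bipartite community absorbs the tree-walk offset through its $t_4$, and the non-bipartite community carries the residual triangle content, with $\gamma$ and the two atom locations providing the degrees of freedom needed to satisfy Stieltjes positivity and the density constraints $s_2, t_2 \le 1$.

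The main obstacle is this $\alpha_4 = 1/2$ subcase: a pure non-bipartite one-atom construction would require $c_3^2 \le c_4$, which is strictly stronger than the bound $c_3^2 = w_3^2 \le w_4 = c_4 + (\text{offset})$ supplied by Cauchy--Schwarz. The resolution is to exploit the extra freedom afforded by a bipartite community and the mixing weight; I would write down the resulting system of algebraic inequalities and verify that the feasible set is non-empty throughout the region $\{w_3^2 \le w_4\}$, so every $4$-limit arising from a convergent sequence is indeed achievable.
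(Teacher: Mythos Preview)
Your treatment of the case $\alpha_4 > 1/2$ is correct and matches the paper's approach (a single-atom moment sequence when $w_3>0$, a bipartite community when $w_3=0$). The gap is in the case $\alpha_4 = 1/2$.

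First, the ``tree-walk offset'' in the transform $T$ of \Cref{relationship} is exactly $2$: one has $w_4 = c_4 + 2$ and $w_3 = c_3$. Your Cauchy--Schwarz bound $w_3^2 \le w_4$ therefore only yields $c_3^2 \le c_4 + 2$, not $c_3^2 \le c_4$. You propose to close this gap using a bipartite/non-bipartite mixture, but this cannot work: the inequality $c_3^2 \le c_4$ is \emph{necessary} for ROC achievability. Indeed, in the setup of \Cref{half} one has $c_3 = \gamma s_3$ and $c_4 = \gamma s_4 + (1-\gamma)t_4$; the Stieltjes condition forces the principal minor $\begin{pmatrix} s_2 & s_3 \\ s_3 & s_4\end{pmatrix}$ to be positive semidefinite, so $s_3^2 \le s_2 s_4 \le s_4$ (using $s_2 \le 1$), whence $c_4 \ge \gamma s_4 \ge \gamma s_3^2 = c_3^2/\gamma \ge c_3^2$. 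Adding a bipartite component only increases $c_4$ and contributes nothing to $c_3$, so no amount of mixing can realize a pair with $c_3^2 > c_4$.

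What is missing is the sharper inequality $w_4 \ge w_3^2 + 2$, which the paper obtains by a refined count: it bounds simple $4$-cycles via $C_4(G) \ge C_3(G)\bigl(C_3(G)/(nd) - 1\bigr)$ (two triangles sharing an edge yield a $4$-cycle) and separately counts the $2nd(d-1)$ closed $4$-walks tracing paths of length two plus $nd$ walks on single edges. Summing and normalizing gives $W_4(G,1/2) \ge W_3(G,1/2)^2 + 2 + O(1/\sqrt{d})$, and this is where the increasing-degree hypothesis is actually used. With $c_4 \ge c_3^2$ in hand, a single non-bipartite (or bipartite, if $c_3=0$) community suffices exactly as in your $\alpha_4>1/2$ case.
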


\section{Convergent sequences of sparse graphs} \label{convergence examples}

We give sequences that illustrate the notions of convergence. \Cref{random seq} focuses on the convergence of sequences of random graphs. Not all sequences of graphs converge or contain a convergent subsequence according to our defintion; see \Cref{non convergent} for an example. 
We begin with the hypercube sequence, which directly motivates this paper.
\begin{lemma}[hypercube limit]\label{hypercube}  The $d$-dimensional hypercube is a graph on $2^d$ vertices, each labeled with a string in $\{0,1\}^d$. Two vertices are adjacent if the Hamming distance of their labels is $1$. Let $(G_d)$ be the sequence of $d$-dimensional hypercubes.
	The sparsity exponent of the sequence $(G_d)$ is $1/2$ and the sequence is fully convergent with limit  $(w_3, w_4, \dots)$ where\footnote{ Throughout $n!!$ denotes the double factorial, $n!!=n\cdot (n-2) (n-4) \dots 2$ for even $n$ and $n!!=n\cdot (n-2) (n-4) \dots 1$ for odd $n$.} 
	$$w_k = \begin{cases} 
	(k-1)!! & \text{for $k$ even}\\
	0 & \text{for $k$ odd}.
	\end{cases}
	$$ 
	The $k$-limit of the sequence is $(w_3, w_4, \dots w_k)$. 
\end{lemma}

\begin{proof} We claim that for $k$ even $W_k(G_d)= (k-1)!! n d^{k/2} + o\brac{ n d^{k/2}}$ where $n=2^d$. Each hypercube edge $(u,v)$ corresponds to a one coordinate difference between the labels of $u$ and $v$. We think of  $k$-walks on the hypercube as length $k$ strings where the $i^{th}$ character indicates which of the $d$ coordinates is changed on the $i^{th}$ edge of the walk. In closed walks each coordinate that is changed is changed back, so every coordinate appearing in the corresponding string appears an even number of times. Therefore at most $k/2$ coordinates appear in the string.  Let $Y_i$ be the number of length $k$ strings with $i$ distinct characters that correspond to a closed $k$-walk. Since there are $d$ possible coordinates, there are ${d \choose i}$ ways to select the $i$ characters and so $Y_i = \Theta(d^i)$. Therefore $$W_k(G_d)= nY_{k/2} + o\brac{ nd^{k/2}}.$$ There are ${ d \choose k/2}$ ways to select the coordinates to change and $k!/2^{k/2}$ length $k$ strings where $k/2$ characters  appear twice. Thus $$Y_{k/2}= d^{k/2} \frac{k!}{2^{k/2} \bfrac{k}{2}!}+ o\brac{d^{k/2}}= (k-1)!! d^{k/2}+ o\brac{d^{k/2}},$$
	and the claim follows. 
	
	Note that there are no odd closed walks in the hypercube because it is bipartite. Therefore $W_k(G)=0$ for $k$ odd. It follows that the sparsity exponent is $1/2$ and the limit vector is as stated. \end{proof}
Our second example is a strongly regular family with a different sparsity exponent. The lemma follows directly from the characterization of the spectra of strongly regular graphs given in \cite{bro12}, see \Cref{sec:rook}.

\begin{lemma}[rook's graph limit]\label{rook}
	The rook graph $G_k$ on $k^2$ vertices is the Cartesian product \footnote{The Cartesian graph product is defined ``the graph product" on p. 104 of \cite{beineke2004topics}.} of two cliques of size $k$. (Viewing the vertices as the squares of a $k \times k$ chessboard, the edges represent all legal moves of the rook.) Let $(G_k)$ be the sequence on rook graphs. The sparsity exponent of $(G_k)$ is $1$ and the sequence is fully convergent with limit $(w_3, w_4, \dots )$ where $w_j= 2^{2-j}$.
\end{lemma}

\Cref{h1,r1} establish that the closed walk limits of the hypercube and rook graph sequences can be achieved by the ROC model. Proofs of these theorems are given in \Cref{sec:hc and rook}.

\section{Closed walk counts of ROC graphs}\label{sec:approx}
We compute the expected closed walk counts for ROC graphs, which determine the limit vector each ROC family achieves. We use this to prove \Cref{polynomials} which describes the limit vector each ROC family achieves.

We begin by describing a combinatorial relationship between closed walk counts and simple cycle counts that appears in graphs in which each vertex is in approximately the same number of simple cycles (\Cref{relationship}). 

\begin{definition}[cycle-walk transform] \label{relationship}
	Let $$\mathcal{S}_k= \left\{\{ (a_1, t_1), (a_2, t_2), \dots, (a_j, t_j) \} \given \sum_{i=1}^j a_i t_i=k, a_i \not = a_j \text{ for } i \not=j, a_i, t_i \in \Z^+, a_i >1 \right\}.$$ Define $T((c_3, c_4, \dots c_n))=(w_3, w_4, \dots w_n)$ as the transform
	$$w_k=\sum_{S \in \mathcal{S}_k }  \frac{k!}{(\prod t_i!)(k+1- \sum t_i)!}\prod_{i=1}^j (c_{a_i})^{t_i}.$$
	The transform $T$ is analogously defined for infinite count vectors. 
\end{definition}

\begin{remark}
 The first few terms of $T$ are illustrated below:
	\begin{align*}
	w_3&=c_3\\
	w_4&=2+c_4\\
	w_5&=c_5+5c_3\\
	w_6&=c_6+6c_4+3c_3^2+5\\
	w_7&=21c_3+7c_3c_4+c_7+7c_5\\
	w_8&=8c_3c_5+28c_3^2+c_8+8c_6+28c_4+4c_4^2+14\\
	w_9&=9c_3c_6+84c_3+12c_3^3+9c_7+36c_5+9c_4c_5+c_9+72c_3c_4\\
	w_{10}&=42+5c_5^2+180c_3^2+45c_3^2c_4+10c_4c_6+90c_3c_5+10c_8+c_{10} +45c_6+10c_3c_7+120c_4+45c_4^2.
	\end{align*}
		We see that $T$ is invertible by using induction to show that each $c_j$ is completely determined by the vector $(w_3, \dots w_j)$. Note $c_3=w_3$ is completely determined. Assume $c_3, \dots c_j$ have been completely determined. Note that $w_{j+1}=c_{j+1}+ f((c_3, c_4, \dots c_{j}))$ for some function $f$. Since $w_{j+1}$ is given and $f$ is a function of values that are already determined, there is only one choice for $c_{j+1}$.
\end{remark}

In \Cref{basics} we derive the coefficient of $\prod_{i=1}^j (c_{a_i})^{t_i}$ in $T$ by counting the number of walk structures that can be decomposed into $t_1, t_2, \dots t_j$ cycles of lengths $a_1, a_2, \dots a_j$ respectively. In \Cref{locally reg section}, we define class of {\em locally regular} graphs in which each vertex is in the same number of cycles, and then show this class of graphs exhibits the relationship between cycles and closed walks given in \Cref{relationship}.

In \Cref{ROC relation}, we prove \Cref{polynomials}, which describes the limit achieved by a ROC family $\mathcal{D}=(\mu, a)$. The parameter $a$ plays an important role. When $a<1/2$, the closed walks that trace trees dominate the closed walk count, so the limit is the Catalan sequence.  When $a>1/2$ the closed walks that trace simple cycles dominate the closed walk count, so the limit is the normalized number of expected simple cycles.  However, when $a=1/2$, cycles, trees, and other walk structures are all of the same order, and so the relationship given in \Cref{relationship} appears in the limit.

\begin{theorem} \label{polynomials}
	Let $(\dist,a)$ be a ROC family. Let $B$ be the set of all $i$ such that $\beta_i=1$, let $B^c$ be the set of all $i$ such that $\beta_i=0$, and let $x= 1/ \brac{\sum_{i\in B^c} \mu_i m_i^2 q_i+2\sum_{i\in B} \mu_i m_i^2 q_i}$. Define $$ c(k)=\begin{cases} 
	1 & k=2\\
	x \sum_{i \in B^c} \mu_i (m_i q_i)^{k} & k \text{ odd and } k\geq 3\\
	x \sum_{i \in B^c} \mu_i (m_i q_i)^{k} + 2x \sum_{i \in B} \mu_i (m_i q_i)^{k}& k \text{ even and } k\geq 4. \end{cases}$$
	Let $\sf{Cat_n}$ $=\frac{1}{n+1} { 2n \choose n}$ denote the $n^{th}$ Catalan number, and let $T$ be as given in $\Cref{relationship}$.
	\begin{enumerate}
		\item If $a<1/2$,  the ROC family fully achieves the limit $(0, $$\sf{Cat_2}$ $, 0, $ $\sf{Cat_3}$ $, \dots)$ with sparsity exponent $1/2$.
		\item If $a=1/2$, the ROC family fully achieves the limit $T((c(3), c(4), c(5), \dots ))$ with sparsity exponent $1/2$.
		\item If $a>1/2$,  the ROC family fully achieves the limit $(c(3), c(4), c(5), \dots )$ with sparsity exponent $a$.
	\end{enumerate}
	The ROC family achieves the corresponding length $k-2$ prefix as its $k$-limit with the same $k$-sparsity exponent for $k \geq 4$.
\end{theorem}

In \Cref{roc sec sec}, we show that the probability the normalized walk count $W_j(G, \alpha)$ of a ROC graph $G \sim ROC(n,d,\mathcal{D})$ deviates from $w_j$ in the limit  achieved by the family vanishes as $d$ grows (\Cref{deviate}). \Cref{roc seq} gives conditions that guarantee that a sequence of graphs drawn from a common ROC family converges almost surely to the limit achieved by the family.

\subsection{The cycle structure of closed walks } \label{basics}

In order to count the number of closed walks in a multi-graph, we divide the closed walks into classes based on the structure of the cycles appearing in the closed walk and then count the number of closed walks in each class. Each class is defined by a ``cycle permutation" 
 in which each non-zero character represents the first step of a cycle within the walk and each zero represents a step in a cycle that has already begun (\Cref{cycle permutation}). Here both (i) a closed walk from $u$ to $v$ and back along a single edge and (ii) a closed walk from $u$ to $v$ on an edge and back from $v$ to $u$ on a different multi-edge are refered to as 2-cycles.
In \Cref{perm bijection}, we show that the number of cycle permutations corresponding to a walk made of $t_1, t_2, \dots t_j$ cycles of lengths $a_1, a_2, \dots a_j$ respectively is the coefficient of $\prod_{i=1}^j \brac{c_{a_i}}^{t_i}$ in the cycle-walk transform.

\begin{figure}[h]
	\centering
	\includegraphics{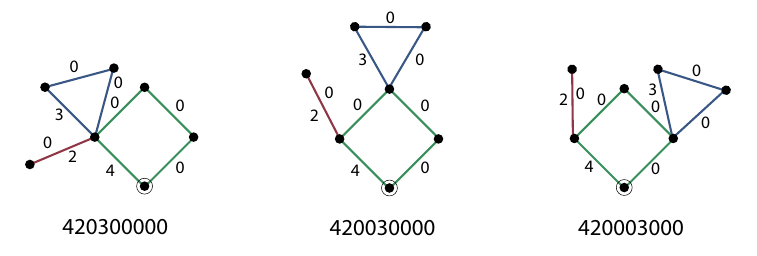}
	\caption{The above walks begin and end at the circled vertex and proceed left to right. Each is labeled with its cycle permutation. }
\end{figure}

\begin{definition}[cycle permutation] \label{cycle permutation}
	Follow the procedure below to label each step of a closed $k$-walk $\mathcal{W}=(r_1, r_2, \dots r_k)$ with a label and define the ``cycle permutation" $P$ of $\mathcal{W}$ as  the labels of the steps in order of traversal. 
	\begin{enumerate}
		\item Repeat until all steps are labeled:\smallskip\\
		Traverse $\mathcal{W}$ skipping a step $r_i$ if it has already been labeled. Let $u$ be the first repeated vertex on this traversal. The modified walk must have traversed a cycle $r_i, r_{i+1}, \dots r_{i+j-1}$ starting at $u$. Label the first step of the cycle (which traverses the edge $r_i$) with the length of the cycle. Label all other steps with zero. 
		\item Traverse $\mathcal{W}$ and let $P$ be the string of labels of the steps as they are traversed. 
	\end{enumerate}
\end{definition}

The following lemma enumerates the cycle permutations using bijection between cycle permutations and generalized Dyck paths (\Cref{dyck d}).

\begin{lemma}\label{perm bijection} Let $\mathcal{S}_k$ be as given in \Cref{relationship}.
	For each $S \in \mathcal{S}_k$, let $M_s$ be the multiset where $a_i$ appears $t_i$ times and there are $k-\sum_i t_i$ zeros. Let $P_S$ be the set of all permutations of $M_s$ such that the following property holds for all $2 \leq i \leq k+1$:
	$$\sum_{\ell \in N(s)} (\ell-1 )\geq z_i$$ where $N(s)$ is the multiset of non-zero labels that appear before the $i^{th}$ label of the permutation and $z_i$ is the number of times zero occurs before the $i^{th}$ label of the permutation.
	The set of cycle permutations is $\bigcup_{s\in S} P_S$ and $$|P_S|=  \frac{k!}{(\prod t_i!)(k+1- \sum t_i)!}.$$ 
\end{lemma}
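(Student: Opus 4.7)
The plan has two parts: first I would characterize cycle permutations as exactly the elements of $\bigcup_{S \in \mathcal{S}_k} P_S$, then count $|P_S|$ by bijection with ordered plane trees.

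For the characterization, I would argue both inclusions constructively. Given a closed $k$-walk and its cycle permutation $P$ produced by \Cref{cycle permutation}, each identified cycle of length $\ell$ contributes exactly one $\ell$-label and $\ell - 1$ zeros, so $P$ permutes the multiset $M_S$ for the $S$ recording the multiplicities of cycle lengths; the Dyck-like inequality at position $i$ encodes that every zero among positions $1,\ldots,i-1$ must belong to a cycle opened strictly earlier, and each cycle of length $\ell$ can absorb at most $\ell-1$ zeros. For the reverse inclusion, given $\pi \in P_S$ I would construct an ambient walk by reading $\pi$ left to right: a non-zero entry $\ell$ opens a new cycle on $\ell$ fresh vertices, and a zero extends the most recently opened still-incomplete cycle by one edge; the Dyck condition guarantees an incomplete cycle is always available, and applying the labeling procedure to the constructed walk recovers $\pi$.

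For the count, I would biject $P_S$ with the set $\mathcal{T}_S$ of ordered plane trees on $k+1$ nodes having exactly $t_i$ internal nodes of out-degree $a_i$ for each $i$ (hence $k+1-m$ leaves, where $m = \sum t_i$). Given such a tree, list out-degrees in preorder to obtain $(d_1,\ldots,d_{k+1})$; the last node visited in preorder is necessarily a leaf, so $d_{k+1}=0$, and the truncation $(d_1,\ldots,d_k)$ is a length-$k$ word whose multiset of entries is exactly $M_S$. The standard preorder validity condition $\sum_{l \leq j} d_l \geq j$ for $1 \leq j \leq k$ is, after using the identity $\#\{l \leq j : d_l > 0\} + \#\{l \leq j : d_l = 0\} = j$ and decomposing each non-zero entry $\ell$ as contributing $\ell-1$ beyond its own counted position, equivalent to the Dyck-like inequality defining $P_S$. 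The inverse simply appends a trailing zero to any $\pi \in P_S$ and reconstructs the unique plane tree whose preorder produces this word. I would then conclude by invoking the classical formula (a consequence of the cycle lemma) that the number of plane trees on $n$ nodes with $n_d$ nodes of out-degree $d$ is $\frac{(n-1)!}{\prod_d n_d!}$, valid whenever $\sum_d d\, n_d = n-1$; setting $n=k+1$, $n_0 = k+1-m$, and $n_{a_i} = t_i$ yields $|\mathcal{T}_S| = \frac{k!}{(k+1-m)!\,\prod_i t_i!}$.

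The main obstacle will be keeping the indexing consistent across the three settings (cycle permutations of length $k$, preorder sequences of length $k+1$, and their truncations) while cleanly verifying the equivalence of the Dyck-like inequality with tree validity. The reverse direction of the characterization also requires carefully confirming that \Cref{cycle permutation} applied to the constructed walk reproduces the input $\pi$; this is cleanest to phrase in terms of the cycle-nesting tree implicitly built along the way.
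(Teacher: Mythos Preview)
Your proposal is correct, and both halves differ from the paper's route in interesting ways.

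For the characterization, the paper handles the reverse inclusion by induction on $T(S)=\sum t_i$: given $P\in P_S$, it locates the last non-zero entry $a_j$, deletes it together with $a_j-1$ trailing zeros to obtain a shorter valid permutation $P'$, applies the inductive hypothesis to get a walk $\mathcal{W}'$, and splices an $a_j$-cycle back in. Your direct left-to-right construction with a LIFO stack of incomplete cycles is equivalent but more constructive; the advantage of the inductive argument is that verifying \Cref{cycle permutation} reproduces $P$ is immediate (the spliced cycle is the last one the procedure removes), whereas in your version this verification is exactly the ``main obstacle'' you flag and requires the observation that the procedure always peels off the most recently opened cycle first.

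For the count, the paper bijects $P_S$ with generalized sub-diagonal Dyck paths on a $(k-m)\times(k-m)$ grid having $t_i$ horizontal steps of length $a_i-1$, then cites \cite{ruk11} (\Cref{dyck}) for the enumeration. Your bijection with plane trees via the \L{}ukasiewicz/preorder degree word is a genuinely different encoding: you append a trailing zero rather than removing the non-zero markers, and you invoke the cycle lemma rather than a Dyck-path count. Your route is arguably more self-contained, since the cycle-lemma formula $\tfrac{(n-1)!}{\prod_d n_d!}$ is textbook, whereas the paper relies on an external reference. The equivalence you need---that the $P_S$ inequality $\sum_{\ell\in N(s)}(\ell-1)\ge z_i$ rewrites as $\sum_{l\le j}\pi_l\ge j$---is exactly what you sketch, and it goes through cleanly once one notes both sides are just partitioning $j$ into ``non-zero positions'' and ``zero positions.''
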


To compute the size of $P_S$ in the above lemma we use a bijection between permutations in $P_S$ and generalized sub-diagonal Dyck paths, whose cardinality is given in \Cref{dyck}. 

\begin{definition}[generalized Dyck path, see \cite{ruk11}] \label{dyck d}
	A generalized Dyck path $p$ is a sequence of $n$ vertical steps of height one and $k \leq n$ horizontal steps with positive integer lengths $\ell_1, \ell_2, \dots \ell_k$ satisfying $\sum_{i=1}^k \ell_i =n$ on a $n\times n$ grid such that no vertical step is above the diagonal. 
\end{definition}

\begin{lemma}[from \cite{ruk11}]  \label{dyck} 
	Let $D$ the set of generalized Dyck paths on a $(k-\sum {t_i}) \times (k-\sum {t_i})$ grid that are made up of $t_i$ horizontal steps of length $a_i-1$ and $k-\sum t_i$ vertical steps of length $1$. Then $$|D|=\frac{k!}{(\prod t_i!)(k+1- \sum t_i)!}.$$
\end{lemma}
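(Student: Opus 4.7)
My plan is to derive the formula by translating generalized Dyck paths into Lukasiewicz paths and applying the Cycle Lemma. First I would encode a generalized Dyck path as a step sequence $X_1, \ldots, X_k$: a horizontal step of length $a_i - 1$ contributes $+(a_i - 1)$, and a vertical step contributes $-1$, so that the partial sums $S_j = X_1 + \cdots + X_j$ track the quantity $x_j - y_j$ along the path. Writing $n = k - \sum_i t_i$, the condition ``no vertical step above the diagonal'' becomes $S_j \geq 0$ for all $j$, while the path endpoint $(n,n)$ forces $S_k = 0$. Thus $|D|$ equals the number of rearrangements of the multiset consisting of $n$ copies of $-1$ and $t_i$ copies of $a_i - 1$ whose partial sums stay nonnegative.

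Next, I would append one extra $-1$ at the end of every such sequence, producing a length-$(k+1)$ sequence with $n + 1$ copies of $-1$ whose partial sums satisfy $S_j \geq 0$ for $j \leq k$ and $S_{k+1} = -1$ --- exactly the Lukasiewicz condition. Since the appended step is forced, this gives a bijection between $D$ and the set of Lukasiewicz sequences on the enlarged multiset. Enumeration reduces to counting these Lukasiewicz sequences, which I handle with the Cycle Lemma: for any integer sequence of length $k + 1$ with entries at least $-1$ summing to $-1$, exactly one of its $k+1$ cyclic rotations is Lukasiewicz --- namely the rotation starting just after the first position at which the running minimum is attained. Moreover, because $\gcd(-1, k+1) = 1$, no such sequence has a nontrivial cyclic period, so every cyclic equivalence class contains exactly $k+1$ distinct representatives.

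Combining these ingredients yields the count by double counting over cyclic classes: the total number of labeled arrangements of the enlarged multiset is the multinomial $\frac{(k+1)!}{(n+1)!\prod_i t_i!}$, and exactly a $\frac{1}{k+1}$ fraction of them are Lukasiewicz, giving
\[
|D| = \frac{k!}{(n+1)!\prod_i t_i!} = \frac{k!}{(k + 1 - \sum_i t_i)!\prod_i t_i!},
\]
as claimed. The only delicate step is justifying the Cycle Lemma in the ``first-hit-minimum'' form above; this is the standard Dvoretzky--Motzkin argument, and I would execute it by directly checking that the chosen shift satisfies strict inequalities before the shift point and weak inequalities afterward, and that no other shift does.
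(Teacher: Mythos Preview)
Your argument is correct. The encoding of a generalized Dyck path as a step sequence with values in $\{-1\}\cup\{a_i-1\}$, the appended $-1$ to pass to Lukasiewicz sequences, and the application of the Cycle Lemma are all sound; the aperiodicity argument (total sum $-1$ forces trivial cyclic period) is exactly what is needed to conclude that each cyclic class has $k+1$ distinct representatives. One small point worth making explicit in a write-up: the inverse of the ``append $-1$'' map requires that every Lukasiewicz sequence on the enlarged multiset necessarily ends in $-1$, which follows because $S_k\ge 0$ and $S_{k+1}=-1$ together force the last step to equal $-1$.

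As for comparison with the paper: the paper does not prove this lemma at all --- it is quoted from \cite{ruk11} and used as a black box. Your Cycle Lemma / Dvoretzky--Motzkin proof is the standard route to such counts and is presumably close in spirit to what the cited reference does, so there is nothing to contrast.
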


\begin{proof}(of \Cref{perm bijection})
	First we show the set of cycle permutations is $\bigcup_{s\in S} P_S$. Let $P$ be a cycle permutation of some closed walk $\mathcal{W}$ of length $k$. Let $s=\{(a_1, t_1), (a_2,t_2), \dots (a_j,  t_j)\}\in S$ where $\{a_1, a_2, \dots a_j\}$ are the non-zero labels of $P$ and each $a_i$ appears $t_i$ times in $P$ (so $\sum a_i t_i=k)$. To see that $P$ is in $P_S$ we show that  for all $2 \leq i \leq k+1$:
	$$\sum_{\ell \in N(s)} (\ell-1 )\geq z_i$$ where $N(s)$ is the multiset of non-zero labels that appear before the $i^{th}$ label of P and $z_i$ is the number of times zero occurs before the $i^{th}$ label of P. Before the $i^{th}$ step of the walk suppose non-zero labels $\ell_1, \dots \ell_k$ have been traversed. The only steps labeled with a zero that have been traversed must be part of a cycle corresponding to one of the labels $\ell_1, \dots \ell_k$. Since $\ell_i$ labels a cycle of length $\ell_i$ at most $\sum (\ell_i -1)$ zero steps have been traversed. Thus $P \in P_S$.
	
	Next we claim that any $P \in \bigcup P_S$ corresponds to a closed walk $\mathcal{W}$.  Let $T(s)= \sum t_i$ be the number of non-zero values in each permutation in $P_S$. We show that for any $k =\sum a_i t_i$ all permutations in $P_S$ correspond to closed walks by induction on $T(s)$. Note for any $k$ and $T(s)=t_1=1$, there is one permutation in $P_S$, $k=a_1$ followed by $k-1$ zeros. This permutation corresponds to a $k$-cycle. Assume that if $T(s')<T(s)$ then each string in $P_{s'}$ corresponds to a closed walk. We show each $P \in P_S$ corresponds to a closed walk. Consider the last non-zero value of the permutation $P$. Without loss of generality, suppose this value is $a_j$ and that it occurs at the $i^{th}$ coordinate of $P$. Since $P \in P_S$, there must be at least $a_j-1$ zeros to the right of $a_j$. Removing $a_j$ and $a_j-1$ zeros to its right in $P$ yields a valid sequence $P' \in P_{s'}$  where $s'=(a_1, \dots a_j, t_1, \dots t_{j-1}, t_j-1)$ and $k' = k- a_j$. By the inductive hypothesis, $P'$ corresponds to a closed walk $\mathcal{W}'$ of length $\sum a_i t_i- a_j$. Add a cycle of length $a_j$ in between the $(i-1)^{st}$ and $i^{th}$ steps of $\mathcal{W}'$ to obtain a closed walk $\mathcal{W}$ of length $k=\sum a_i t_i$. We have shown that $\bigcup P_S$ is the set of all cycle permutations for closed walks. 
	
	We compute the size of $|P_S|$ by constructing a bijection between permutations in $P_S$ and a set of subdiagonal generalized Dyck paths. Let $s=\{(a_1, t_1), (a_2,t_2), \dots (a_j,  t_j)\}  \in S$.  Let $D$ the set of subdiagonal generalized Dyck paths on a $(k-\sum {t_i}) \times (k-\sum {t_i})$ grid that are made up of $t_i$ horizontal steps of length $a_i-1$ and $k-\sum t_i$ vertical steps of length $1$. Each non-zero label $a_i$ of $P_S$ corresponds to a horizontal step of length $a_i-1$ and each zero label of $P_S$ corresponds to a vertical step of length one. Consider the map between permutations and generalized Dyck paths based on this correspondence. The condition that for all $2 \leq i \leq k+1$ $\sum_{\ell \in N(s)} (\ell-1 )\geq z_i$  translates to the generalized Dyck path not crossing the diagonal. Thus, the correspondence is a bijection between $P_S$ and $D$. \Cref{dyck} implies that $|P_S|=|D|=\frac{k!}{(\prod t_i!)(k+1- \sum t_i)!}$.
\end{proof}

\subsection{Walk and cycle counts in locally regular graphs} \label{locally reg section}
We show that the polynomial relating cycles and closed walks given in \Cref{relationship} governs the relationship between cycles and closed walks in graphs where each vertex is in approximately the same number of cycles. 

\begin{definition}[$k$-locally regular, essentially $k$-locally regular] Let $C_k(G, v)$ denote the the number of $k$-cycles at vertex $v$ in $G$. \begin{enumerate}
		\item 	A multi-graph $G$ is $k$-locally regular if it is regular and $C_j(G, v)= C_j(G, u)$ for all $u, v \in V(G)$ and $j\leq k$.
		\item A sequence of multi-graphs $(G_i)$ with $d_i \to \infty$ and $k$-sparsity exponent $a$ is essentially $k$-locally regular if $C_j(G_i,v)- C_j(G_i,u)= o\brac{d_i^{j/2}}$ for all $u, v \in V(G)$ and $j\leq k$.
	\end{enumerate}
\end{definition}

\begin{theorem}\label{local reg thm}
	Let $G$ be a $k$-locally regular multi-graph on $n$ vertices with degree $d$. Let $c_k=C_k(G)/(nd^{k/2})$ and $w_k=W_k(G)/(nd^{k/2})=W_k(G,1/2)$ where $W_k(G)$ and $C_k(G)$ denote the number of closed $k$-walks and simple $k$-cycles in $G$ respectively. Then 
	$$w_k= T( (c_3, c_4, \dots c_k )  ).$$
\end{theorem}

\begin{proof}
	We count the number of closed walks in $G$ at a vertex $v$ by partitioning the closed walks into sets based on their cycle permutation and computing the size of each partition class. Let $S = \{(a_1,t_1),  (a_2,t_2), \dots ,(a_j, t_j)\}\in \mathcal{S}_k$ and $P \in P_S$ as defined in \Cref{perm bijection}. Define $X_P$ as the number of walks with cycle permutation $P$ at $v$ in $G$. Let $t= \sum t_i$ be the number of non-zero values in $P$ and let $N(P,i)$ denote the $i^{th}$ non-zero value of the string $P$. Let $C_k(G,u)$ denote the number of $k$-cycles at $u$. Since  $G$ is locally regular $C_k(G,u)= c_k d^{k/2}$ for all $k$. It follows
	
	\begin{equation}\label{xp} 
	X_P = \prod_{\ell=1}^t C_{N(P,\ell)}(G,u) = \prod_{i=1}^j  \brac{c_{a_i} d^{a_i/2}}^{t_i}= d^{k/2}\prod_{i=1}^j  \brac{c_{a_i}}^{t_i}.
	\end{equation}
	Summing over all $P \in P_S$ and all vertices we obtain
	$$W_k(G)=n d^{k/2}\sum_{S \in \mathcal{S}} |P_S| \prod_{i=1}^j  \brac{c_{a_i}}^{t_i}, \quad \text{ equivalently } \quad w_k= \sum_{S \in \mathcal{S}} |P_S| \prod_{i=1}^j  \brac{c_{a_i}}^{t_i}.$$
	The statement follows directly from \Cref{perm bijection}. 
\end{proof}

\begin{theorem}\label{locally reg}
	Let $(G_r)$ be a sequence of essentially $k$-locally regular multi-graphs with $n_r$ vertices and degree $d_r \to \infty$, sparsity exponent $1/2$, and $k$-limit $(w_3, w_4, \dots w_k)$. Let $C_j(G_r)$ be the number of $j$-cycles in $G_r$.  
	Then $(w_3, w_4, \dots w_k )=T((c_3, c_4, \dots c_k ) )$ where $$c_j= \lim_{r \to \infty }\frac{C_j(G_r)}{n_r d_r^{j/2}}.$$
\end{theorem}

\begin{proof} 
	We follow the proof of \Cref{local reg thm} until line \Cref{xp}. Since the $G_r$ is approximately locally regular rather than locally regular, we have the weaker guarantee that $C_k(G_r, u)= \frac{C_k(G_r)}{n_i}+ o\brac{d_i^{j/2}}.$ It follows that for $G=G_r$, $n=n_r$, and $d=d_r$,
	
		\begin{equation*} 
		X_P = \prod_{\ell=1}^t C_{N(P,\ell)}(G,u) = \prod_{i=1}^j  \brac{\frac{C_{a_i}(G)}{n }+ o\brac{d^{a_i/2}}}^{t_i}= d^{k/2}\prod_{i=1}^j  \brac{\frac{C_{a_i}(G)}{n d^{a_i/2}} }^{t_i} +o\brac{d^{k/2}}.
		\end{equation*}
		Summing over all $P \in P_S$ and all vertices we obtain
		$$W_k(G_r)=  n_r d_r^{k/2}\sum_{S \in \mathcal{S}} |P_S| \prod_{i=1}^j \brac{\frac{C_{a_i}(G_r)}{n_r d_r^{a_i/2}} }^{t_i} +o\brac{n_rd_r^{k/2}}.$$
		Therefore
			$$w_k=\lim_{r \to \infty} \frac{W_k(G_r)}{n_r d_r^{k/2}}= \lim_{r \to \infty} \sum_{S \in \mathcal{S}} |P_S| \prod_{i=1}^j \brac{\frac{C_{a_i}(G_r)}{n_r d_r^{a_i/2}} }^{t_i} +o_r\brac{1}= \sum_{S \in \mathcal{S}} |P_S| \prod_{i=1}^j \brac{c_{a_i}}^{t_i},$$
		and the statement follows directly from \Cref{perm bijection}. 
\end{proof}

\subsection{Limits achieved by ROC families} \label{ROC relation}
We prove \Cref{polynomials}, which describes the limits of ROC families. The following lemma gives the expected number of closed walks by permutation type.

\begin{lemma}\label{exp by class}
	Let $S  = \{(a_1,t_1),  (a_2,t_2), \dots ,(a_j, t_j)\}\in \mathcal{S}_k$ as defined in \Cref{perm bijection}, and  let $t= \sum_{i=1}^j t_i$. Let $X_S(G)$ be the random variable for the number of walks with a permutation type in $P_S$ in $G \sim ROC(n, d,\mathcal{D})$ where $d=o \brac{n^{1/((1-a)k+2a-1)}}$. Then for the function $c$ as given in \Cref{polynomials}
	\begin{enumerate}
		\item For $a<1$, $\E {X_S(G)}=    |P_S| \brac{\prod_{i=1}^j c(a_i)^{t_i}} nd^{(1-2a)t+ak }+o\brac{nd^{(1-2a)t+ak}}.$
		\item For $a=1$ and $t=1$, $\E {X_S(G)}=   |P_S|\brac{\prod_{i=1}^j c(a_i)^{t_i}} nd^{k-1} +o\brac{nd^{k-1}}.$
		\item For $a=1$ and $t>1$, $\E {X_S(G)}=  \Theta \brac{ nd^{k-t}}.$ 
	\end{enumerate} 
\end{lemma}

Taking $S= \{ (k,1)\}$ in the above lemma gives the number of simple $k$-cycles in a ROC graph. The following corollary describes the cycle counts when the community size is a constant independent of $d$.  The following corollary follows directly from Case 1 of Lemma 17.
\begin{corollary} \label{exp by class cor}
	Let $G \sim ROC(n,d,\mu,0)$ Then for $d= o \brac{ n^\frac{1}{k-1}}$,
	$$\E { C_k(G)}= c(k) nd+ o(nd).$$ 
\end{corollary}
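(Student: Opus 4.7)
The plan is to specialize Lemma~\ref{exp by class} to the single choice $S = \{(k,1)\}$, exploiting the fact that the closed walks counted by $X_S(G)$ for this $S$ are precisely those that traverse a single simple $k$-cycle, which by the paper's convention is the quantity $C_k(G)$.

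First I would nail down the combinatorial identification $X_{\{(k,1)\}}(G) = C_k(G)$. By Lemma~\ref{perm bijection}, the set $P_{\{(k,1)\}}$ consists of permutations of the multiset containing a single entry $k$ and $k-1$ zeros satisfying the partial-sum condition, so the unique such element is $(k, 0, \ldots, 0)$, giving $|P_S| = k!/(1! \cdot k!) = 1$. By the definition of cycle permutation, a closed $k$-walk whose label string equals $(k, 0, \ldots, 0)$ is a walk that immediately enters and completes a simple $k$-cycle; summing over starting vertices and orientations, the collection of such walks is exactly what the paper calls $C_k(G)$ (each simple $k$-cycle is counted once per traversing closed walk).

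Next I would substitute $a = 0$, $j = t = 1$, $a_1 = k$, $t_1 = 1$ into part~(1) of Lemma~\ref{exp by class}. The exponent $(1-2a)t + ak$ collapses to $1$, the product $\prod_{i=1}^j c(a_i)^{t_i}$ reduces to $c(k)$, and the lemma's degree hypothesis $d = o(n^{1/((1-a)k + 2a - 1)})$ becomes the corollary's assumption $d = o(n^{1/(k-1)})$. Plugging in yields $\E{C_k(G)} = c(k)\,nd + o(nd)$, as claimed.

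Since the heavy combinatorial lifting is already absorbed in Lemmas~\ref{exp by class} and~\ref{perm bijection}, there is no substantive obstacle here. The only step requiring any care is the identification $X_{\{(k,1)\}}(G) = C_k(G)$, which is essentially tautological once one reads off the cycle permutation definition and remembers the counting convention for $C_k$.
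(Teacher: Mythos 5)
Your proof is correct and follows exactly the route the paper intends: specialize Lemma~\ref{exp by class} to $S=\{(k,1)\}$, note $|P_S|=1$ from Lemma~\ref{perm bijection}, identify $X_{\{(k,1)\}}(G)$ with $C_k(G)$ via the cycle-permutation definition, and plug in $a=0$ so the exponent $(1-2a)t+ak$ becomes $1$ and the degree hypothesis becomes $d=o(n^{1/(k-1)})$. The paper gives only a one-line remark in place of a proof, so your write-up is a more explicit version of the same argument.
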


\begin{proof}(of \Cref{exp by class}) Let $P \in P_S$, and let $X_P(G)$ be the random variable for the number of walks in $G$ with permutation type $P$. We show that $\E{X_P(G)}$ is the same for each $P \in P_S$ and so
	\begin{equation} \label{a and b} \E {X_S(G)}= \sum_{P \in P_S} \E {X_P(G)}= |P_S| \E{X_P(G)}. \end{equation}

	To compute the expectation of $X_P(G)$, we apply linearity of expectation to indicator random variables representing each possible walk. We define a possible walk as  (i) an ordered set of vertices $(v_1, \dots v_k)$ such that the walk $v_1, v_2, \dots v_k$ is closed and has cycle permutation $P$ and (ii)  an ordered set of communities $(u_1, \dots u_k)$, $u_i \in [xnd^{1-2a}]$. (Recall that each a ROC graph is the union of Erd\H{o}s R\'enyi graphs. Each such Erd\H{o}s R\'enyi graph is called a community.) The walk exists if for each $1 \leq i \leq k-1$, the vertices $v_i$ and $v_{i+1} $ are adjacent by an edge that was added in the $(u_i)^{th}$ community in the construction of $G$. The probability a possible walk exists in $G$ depends on how often the community labels $(u_1, \dots u_k)$ change between adjacent vertices. 
	
	Let $A$ be the set of possible walks in which each cycle is assigned a distinct community, each edge is labeled with the community assigned to its cycle, and there are $k-t+1$ distinct vertices.
	We write $X_P(G)= A_P(G)+ B_P(G)$ where $A_P(G)$ is the random variable for the number of walks in $A$ that appear in $G$ and $B_P(G)$ is the random variable for the number of walks that appear in $G$ and are not in $A$.  We compute $\E {A_P(G)}$ and show that $\E { B_P(G)}= o \brac{\E { A_P(G)}}$ in cases (1) and (2) and $\E { B_P(G)}= O \brac{\E { A_P(G)}}$ in case (3).
	
	\underline{Claim 1:} $\E {A_P(G)}=\brac{\prod_{i=1}^j c(a_i)^{t_i}} nd^{(1-2a)t+ak }+o\brac{nd^{(1-2a)t+ak}}$. 
	
	We write $A_P(G)$ as the sum of random variables $A_{W}(G)$ that indicate if a walk $W\in A$ is in $G$. We show that $\E{ A_W(G)}$ is the same for all $W$ in $A$ and so 
	$$\E{A_P(G)}= |A|  \E{ A_W(G)}= |A|  \Pr{ A_W(G)}.$$
	
	 We now compute $\Pr{A_W(G)}$. Let $z_1, \dots z_t$ be the non-zero characters of $P$ ordered by first appearance. Let $A_\ell$ be the event that all edges in the cycle corresponding $z_\ell$ were added in the community assigned to $z_\ell$, which we denote $y_\ell$. The probability of $A_\ell$ depends on the community type $(m_i, q_i, \beta_i)$ of $y_\ell$. We compute $$\Pr{ A_\ell}= \sum_{i} \Pr{ \text{ specified cycle appears in community $y_{\ell}$}  \given \text{ $y_{\ell}$ is type $i$} } \Pr{\text{ $y_{\ell}$ is type $i$} }.$$
	It follows that 
	$$
	\Pr{ A_\ell}=
	\begin{cases}
\psi_\ell	\sum_{i \in B} 2\bfrac{m_id^a}{n}^{z_{\ell}} q_i^{z_{\ell}} \mu_i + \sum_{i \in B^c} 
	\bfrac{m_id^a}{n}^{z_{\ell}} q_i^{z_{\ell}} \mu_i & z_{\ell}\geq 3\\
	& \\
	\sum_{i \in B} 2\bfrac{m_id^a}{n}^{2} q_i \mu_i + \sum_{i \in B^c} \bfrac{m_id^a}{n}^{2} q_i \mu_i & z_{\ell}=2.
	\end{cases} 
	$$
	where $\psi_\ell$ is zero when $z_\ell$ is odd and one when $z_\ell$ is even. 
	Equivalently, $\Pr{A_\ell}=d^{az_\ell} c(z_\ell)/ (xn^{z_\ell})$. 
The event that the walk $W$ appears in $G$ is the intersection of the events $A_1, \dots, A_t$. Note that these events are independent because the communities $y_1, y_2, \dots y_t$ are distinct.
	It follows $$\Pr{A_W(G)}= \prod_{\ell=1}^t \Pr{A_\ell}= \frac{ d^{ak} \prod_{i=1}^j c(a_i)^{t_i}}{ n^k x^t}.$$ 

Next we compute the size of $A$.  There are $\frac{(xnd^{1-2a})!}{(xnd^{1-2a}-t) !}= (xnd^{1-2a})^{t}+o\brac{ (xnd^{1-2a})^{t}}$ ways to select $t$ distinct communities and $\frac{n!}{n-(k-t+1)!}=n^{k-t+1}+o \brac{n^{k-t+1}}$ ways to select the vertices for $W \in A$. The claim follows,
\begin{align}
\E{A_P(G)} &=\brac{(xnd^{1-2a})^{t}+o\brac{ (xnd^{1-2a})^{t}}} \brac{n^{k-t+1}+o \brac{n^{k-t+1}} } \brac{ \frac{ d^{ak} \prod_{i=1}^j c(a_i)^{t_i}}{ n^k x^t}} \nonumber \\
&=n d^{(1-2a)t+ak} \brac{\prod_{i=1}^j c(a_i)^{t_i}} + o\brac{nd^{(1-2a)t+ak}}.\label{a walks}
\end{align}

\underline{Claim 2:} In cases (1) and (2), $\E {B_P(G)}= o\brac{nd^{(1-2a)t+ak}}$, and in case (3) $\E {B_P(G)}=$ $O \brac{nd^{(1-2a)t+ak}}$.

Before computing $\E{B_P(G)}$, we introduce notation to describe the features of possible walks that are not in $A$. Let $z_1, \dots, z_t$ be the non-zero characters of $P$, so the walk is composed of cycles of lengths $z_1, \dots z_t$. Let $m_\ell$ be the number of vertices in the cycle corresponding $z_\ell$ that do not appear in the cycles corresponding to $z_1, \dots z_{\ell-1}$. We label each edge with the community it belongs to and call these labels ``community edge labels." Let $\lambda_i$ be the number communities that appear as community edge labels in the cycle corresponding to $z_{\ell}$ and do not appear as community edge labels in any cycle corresponding to $z_1, \dots z_{\ell-1}$. If the $i^{th}$ edge $(v_i,v_{i+1})$ is given the community edge label $u_i$, $v_i$ and $v_{i+1}$ must have both been members of community $u_i$ if the possible walk exists in $G$. We say the $i^{th}$ edge ``assigns" the community $u_i$ to the vertices $v_i$ and $v_{i+1}$. Each vertex receives one such ``vertex-community assignment" per adjacent edge. Vertices may receive the same vertex-community assignment multiple times, i.e. if two consecutive edges have the same community edge label, then the common end is assigned to the same community twice. Let $\Gamma_\ell$ be the number of unique vertex-community assignments from the cycle corresponding to $z_\ell$ that are not repeats of vertex-community assignments given by edges  in cycles corresponding to $z_1, \dots z_{\ell-1}$.  Let $m= \sum_i m_i\leq k-t+1$, $\lambda=\sum_i \lambda \leq k$, $\Gamma= \sum \Gamma_i$, and $j$ be the number of indices $i$ such that $\lambda_i \geq 2$. Let $\mathcal{P}$ denote the parameters $\{\lambda_i, m_i, \Gamma_i\}$, and let   $W_{\mathcal{P}}(G)$ be the number of possible walks with the  parameters $\mathcal{P}$. There are $\Theta \brac{ \brac{nd^{1-2a}}^\lambda}$ ways to select the communities, $\Theta \brac{ n^m}$ ways to select the vertices. The probability a vertex is in an assigned community is $\Theta \bfrac{d^a}{n}$. It follows that 
\begin{equation}\label{lower order}
\E{ W_{\mathcal{P}}(G)}= \Theta \brac{   \brac{nd^{1-2a}}^\lambda  n^m  \bfrac{d^a}{n}^\Gamma  }.
\end{equation}

Next we show that for any set of parameters $\mathcal{P}$, $\Gamma \geq m+ \lambda +j-1$. First we describe relationships between $\lambda_\ell$, $\Gamma_\ell$, $z_\ell$, and $m_\ell$ in different settings.
\begin{enumerate}
\item If there are  precisely $\lambda_\ell$ communities labeling the edges in the cycle corresponding to $z_{\ell}$ then 
\begin{align*}
 \Gamma_\ell &\geq z_\ell+ \lambda_\ell  \quad \quad \quad &\lambda_\ell \geq 2\\
  \Gamma_\ell &= z_\ell  \quad \quad \quad & \lambda_\ell =1.
  \end{align*}
  If there are $\lambda_\ell \geq 2$ distinct communities labeling edges in the cycle, then there are at least $\lambda_\ell$ vertices where the adjacent edges are assigned different communities. These vertices contribute $2\lambda_\ell$ vertex-community assignments and the remaining $z_\ell- \lambda_\ell$ vertices are also assigned a community. If $\lambda_\ell=1$, then each vertex is assigned to the one community.
  
  \item If there are more than $\lambda_\ell$ distinct communities labeling the edges in the cycle corresponding to $z_{\ell}$ then 
\begin{align*}
 \Gamma_\ell &\geq m_\ell+ \lambda_\ell  +1 \quad \quad \quad &\lambda_\ell \geq 1 \\
  \Gamma_\ell &\geq m_\ell \quad \quad \quad &\lambda_\ell =0.
    \end{align*}
   The $m_i$ new vertices must be assigned at least one community. When $\lambda_\ell \not=0$, there must be at least $\lambda_\ell +1$ vertices in which (i) both adjacent edges are labeled with two different first appearing communities or (ii) one adjacent edge is labeled with a first appearing community and one adjacent edge is labeled with a community that has already appeared. If such a vertex is a new vertex then this vertex has a total of two community assignments. If such a vertex has appeared before, it has not been previously assigned to a first appearing community, so this contributes one community assignment.
   \end{enumerate}
Since the first vertex of a cycle corresponding to $z_\ell$ for $\ell \geq 2$ has already been visited, $z_\ell \geq m_\ell+1$ for $\ell \geq 2$. Therefore when $\ell \geq 2$
\begin{align*}
 \Gamma_\ell &\geq m_\ell+ \lambda_\ell +1   &\lambda_\ell \geq 2\\
  \Gamma_\ell &= m_\ell +\lambda_\ell  & \lambda_\ell \leq 1,
  \end{align*}
 and for $\ell=1$, 
 \begin{align*}
 \Gamma_1 &\geq m_1+ \lambda_1  &\lambda_1 \geq 2\\
  \Gamma_1 &= m_1   & \lambda_1=1.
  \end{align*}
  Summing the above inequalities over $\ell$ yields the observation that $\Gamma \geq m+ \lambda +j-1$. Note also that $m \leq k-t+1$. Equation \Cref{lower order} becomes
\begin{equation}\label{lower order simplified}
\E{ W_{\mathcal{P}}(G)}= \Theta \brac{   \brac{nd^{1-2a}}^\lambda  n^m  \bfrac{d^a}{n}^{m+ \lambda +j-1} }=O \brac{n^{1-j} d^{(1-a)\lambda + a(k-t+j)}}.
\end{equation}

Next consider a walk that is not in $A$. There must either be (i) a cycle that has at least two new community labels and so $j \geq 1$ or (ii) fewer than $t$ total community labels, so $\lambda \leq t-1$. If $t=1$ (the walk is a simple cycle), case (ii) does not occur because there must be at least one community label. 
In case (i), $j \geq 1$, $\lambda \leq k-t+j$ and $t\geq 1$, and so Equation \Cref{lower order simplified} becomes 
\begin{equation} \label{type i}
\E{ W_{\mathcal{P}}(G)}=O \brac{n^{1-j} d^{k-t+j}}
=
\begin{cases}
O\brac{d^{k-t+1}}& j =1\\
o\brac{d^{k-t}} & j \geq 2.
\end{cases}
\end{equation}
Therefore
$$\E{ W_{\mathcal{P}}(G)}=nd^{ (1-2a) t+ak} O \brac{n^{-1}d^{(1-a)k+2a-1} }= o \brac{nd^{ (1-2a) t+ak} }.$$
 In case (ii),  $\lambda \leq t-1$ and $j \geq 0$, and so Equation \Cref{lower order simplified} becomes 
\begin{equation}\label{type ii}
\E{ W_{\mathcal{P}}(G)}= O \brac{n^{1-j} d^{(1-a)\lambda + a(k-t+j)}}
= nd^{ (1-2a) t+ak} O \brac{d^{a-1} }.
\end{equation}
Therefore for any $ \mathcal{P}$ that is not in $A$,
$$\E{ W_{\mathcal{P}}(G)}=
\begin{cases}
 o \brac{nd^{ (1-2a) t+ak} } & a<1 \text{ or } a=1 \text{ and } t=1\\
 O \brac{nd^{ (1-2a) t+ak} }& a=1 \text{ and } t>1.
\end{cases}
$$

Note $\E {B_P(G)}= \sum_{\mathcal{P}} \E{ W_{\mathcal{P}}(G)}$. Since the number of sets of valid parameters $\mathcal{P}$ is constant,  Claim 2 follows.

The computation of $\E{ X_P(G)}= \E { A_P(G)}+ \E { B_P(G)}$ did not rely on any information about $P$ besides that $P \in P_S$. Therefore, Equation $\Cref{a and b}$ holds and the statement of the lemma follows directly from Claims 1 and 2. 
\end{proof}

\begin{proof} (of \Cref{polynomials})
For $a <1$,  \Cref{exp by class} implies 
\begin{equation}\label{full}
 \E{W_k(G)}=
 \sum_{S \in \mathcal{S}} |P_s|  \brac{\prod_{i=1}^j c(a_i)^{t_i}} nd^{(1-2a)t+ak }+o\brac{nd^{(1-2a)t+ak}} 
 \end{equation}
We now collect the highest order terms  of \Cref{full} for different values of $a$. Recall $\sum a_i t_i =k$ and $a_i \geq 2$,  so $1 \leq \sum t_i \leq \frac{k}{2}$.

\textbf{Case 1: $a \in (0,1/2)$.}
The highest order term of \Cref{full} is from $S \in \mathcal{S}$ with $a_1=2$ and $t_1= \frac{k}{2}$ for $k$ even and $S \in \mathcal{S}$ with $a_1=3$, $a_2=2$, $t_1=1$, $t_2= \frac{k-3}{2}$ for $k$ odd. For even $k$, Equation \Cref{full} becomes
$$ \E{W_k(G)}=n d^{k/2} \frac{k!}{\brac{\frac{k}{2}}! \brac{\frac{k}{2}+1}!}c(2)^{k/2} +  o\brac{nd^{k/2}}=(Cat_{k/2}) nd^{k/2} +o(nd^{k/2}).$$
For odd $k$, Equation \Cref{full} becomes $$ \E{W_k(G)}= O\brac{nd^{ak +(1-2a) \frac{k-1}{2}} }= O\brac{nd^{\frac{k-1}{2}+a}}=o(nd^{k/2}).$$
It follows that the ROC family $\mathcal{D}=(\mu,a)$ achieves the Catalan vector with sparsity exponent $1/2$, and achieves the $k-2$ length prefix with $k$-sparsity exponent $1/2$ for all $k \geq 4$.

\textbf{Case 2: $a=1/2$.}
Each $S \in \mathcal{S}$ contributes a term of order $nd^{k/2}$ to Equation \Cref{full}. Therefore
$$ \E{W_k(G)}= \brac{ \sum_{S \in \mathcal{S}_k}  \frac{k!}{(\prod t_i!)(k+1- \sum t_i)!}\prod_i  c(a_i)^{t_i} }nd^{k/2}+  o\brac{nd^{k/2}}= w(k) nd^{k/2} + o \brac{nd^{k/2}}.$$
It follows that the ROC family $\mathcal{D}=(\mu,a)$ achieves the limit $(w_3, w_4, \dots )$ with sparsity exponent $1/2$, and achieves the $k-2$ length prefix with $k$-sparsity exponent $1/2$ for all $k \geq 3$.

\textbf{Case 3: $a \in (1/2,1)$.} The highest order term of \Cref{full} is from $S \in \mathcal{S}$ with $a_1=k$ and $t_1=1$. Therefore Equation \Cref{full} becomes $$ \E{W_k(G)}=  c(k) nd^{1+a(k-2)} +o\brac{nd^{1+a(k-2)}}.$$
It follows that the ROC family $\mathcal{D}=(\mu,a)$ achieves the limit $(c_3, c_4, \dots )$ with sparsity exponent $a$, and achieves the $k-2$ length prefix with $k$-sparsity exponent $a$ for all $k \geq 3$.

\textbf{Case 4: $a=1$.}
For $S \in \mathcal{S}$ with $t=\sum t_i$, the number of walks with permutation type in the set $P_S$ is $\Theta \brac{ nd^{(1-2a)t +ak}}$. Therefore the walks contributing the highest order terms correspond to $S \in \mathcal{S}$ with $a_1=k$ and $t_1=1$.  By parts 2 and 3 of \Cref{exp by class}, we have $$ \E{W_k(G)}=  c(k) nd^{k-1} +o\brac{nd^{k-1}}.$$
It follows that the ROC family $\mathcal{D}=(\mu,a)$ achieves the limit $(c_3, c_4, \dots )$ with sparsity exponent $1$, and achieves the $k-2$ length prefix with $k$-sparsity exponent $1$ for all $k \geq 3$.
\end{proof}

\section{ROC achievable limits}\label{achievable vectors}
In this section we give conditions for when a limit is achievable (\Cref{achieve conditions}) and show the limits of the hypercube and rook sequences are achievable (\Cref{sec:hc and rook}).

\subsection{Conditions for ROC achievable limits} \label{achieve conditions}
In this section we address the questions: for which vectors $L$ does there exist  a ROC family that achieves limit or $k$-limit $L$ with sparsity exponent $\alpha$? We first show that all $4$-limits are achievable in \Cref{4 lim}, then  describe necessary and sufficient conditions for a limit vector (of any length) to be achievable in \Cref{achieve con}. Finally \Cref{char} in \Cref{sc shortcut} gives a convenient criterion for determining when the Stieltjes condition is satisfied.

\subsubsection{Achievability of $(w_3,w_4)$ and triangle-to-edge and four-cycle-to-edge ratios.} \label{4 lim}

First we show that triangle-to-edge and four-cycle-to-edge ratios can be achieved simultaneously.
\begin{proof} (of \Cref{real network bounds}.) 
	(1) For clarity of this proof we refer to the number triangle and four-cycle structures (not counted as walks). Under this convention, the number of triangles is $T_3= C_3(H)/6$ and the number of four-cycles is $T_4=C_4(H)/8$. Note $T_3= |E(H)|c_3/6$ and $T_4=c_4 |E(H)|/8$.
	For each edge in $H$, let $t_e$ be the number of triangles containing $e$, so $\sum_{e \in E(H)} t_e = 3 T_3=  c_3 |E(H)|/2$. If triangles $abc$ and $abd$ are present, then so is the four-cycle $acbd$. This four-cycle may also be counted via triangles $cad$ and $cdb$. Therefore $T_4 \geq \frac{1}{2} \sum_{ e\in E(H)} {t_e \choose 2}=\frac{1}{2} \sum_{ e\in E(H)} t_e(t_e-1)/2$ . This expression is minimized when all $t_e$ are equal. We therefore obtain $$\frac{c_4|E(H)|}{8}=T_4\geq \frac{|E(H)|}{2} {  c_3/2 \choose 2}  = \frac{c_3(c_3/2-1) |E(H)|}{8}.$$ It follows that $\frac{c_3(c_3/2-1)}{c_4}\leq1$.  
	
	(2) Since the hypothesis guarantees $q \leq 1$, applying the special case of \Cref{exp by class cor} described in \Cref{counts} to $G \sim ROC\left(n,d, \frac{2c_4^2}{c_3^3}, \frac{c_3^2}{2c_4}\right)$ implies the desired statements.  
\end{proof}
 
Next we prove \Cref{three four}, which states that any $(w_3, w_4)$ that is a limit of a sequence of graphs with increasing degree can be achieved by a ROC family. In fact, the requirement that degree increases is only needed for the case in which the $4$-sparsity exponent is $1/2$.

\begin{lemma} \label{34 relationship}
Let $C_j(G)$ and $W_j(G)$ denote the number of simple $j$-cycles and closed $j$-walks of a graph $G$ respectively. For any graph $G$ on $n$ vertices with average $d$ 
$$W_4(G)  \geq \frac{W_3(G)^2}{nd} \quad \text{ and } \quad C_4(G)  \geq C_3(G) \brac{ \frac{C_3(G)}{nd} -1}.$$
\end{lemma}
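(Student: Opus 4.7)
The plan is to prove the two inequalities separately, using the spectral identity $W_k(G)=\sum_i \lambda_i^k$ for the first and the diamond-counting argument from the proof of \Cref{real network bounds}(1) for the second.

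For the walk inequality $W_4(G) \geq W_3(G)^2/(nd)$, I would start from the footnote already noted in the introduction: if $\lambda_1, \dots, \lambda_n$ are the eigenvalues of the adjacency matrix of $G$, then $W_k(G) = \sum_i \lambda_i^k$. In particular $W_2(G) = \sum_i \lambda_i^2 = 2|E(G)| = nd$. Writing $\lambda_i^3 = \lambda_i \cdot \lambda_i^2$ and applying Cauchy--Schwarz gives
$$W_3(G)^2 = \brac{\sum_i \lambda_i \cdot \lambda_i^2}^2 \;\leq\; \brac{\sum_i \lambda_i^2}\brac{\sum_i \lambda_i^4} \;=\; nd \cdot W_4(G),$$
which rearranges to the claim.

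For the cycle inequality, I would reprise the edge-local triangle count from \Cref{real network bounds}(1), but keep it as an exact inequality rather than an asymptotic one. Let $T_3 = C_3(G)/6$ and $T_4 = C_4(G)/8$ denote the numbers of triangle and four-cycle subgraphs, and for each edge $e$ let $t_e$ denote the number of triangles containing $e$, so that $\sum_e t_e = 3T_3$. Any two triangles $abc$ and $abd$ sharing the edge $ab$ determine the four-cycle $acbd$, and conversely each four-cycle arises in this way for at most two of its edges (its two diagonals, whenever they happen to be edges of $G$), yielding
$$T_4 \;\geq\; \tfrac{1}{2}\sum_{e \in E(G)} \binom{t_e}{2}.$$
Since $\binom{x}{2} = x(x-1)/2$ is convex, Jensen's inequality gives $\sum_e \binom{t_e}{2} \geq |E(G)|\binom{3T_3/|E(G)|}{2}$. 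Substituting $|E(G)| = nd/2$, $T_3 = C_3(G)/6$, $T_4 = C_4(G)/8$ and simplifying produces $C_4(G) \geq C_3(G)(C_3(G)/(nd) - 1)$ (noting that the bound is vacuous when $C_3(G)/(nd) < 1$, since $C_4(G) \geq 0$).

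Both arguments are essentially routine and I do not anticipate any serious obstacle; the only subtlety worth being careful about is the factor of $\tfrac{1}{2}$ in the diamond-counting step, which is exactly the over-count coming from four-cycles whose two diagonals both lie in $E(G)$ (i.e., four-cycles sitting inside a copy of $K_4$).
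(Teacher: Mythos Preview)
Your proposal is correct. For the second inequality you reproduce exactly the paper's combinatorial argument (the paper phrases it over directed edges with $t_e$ the number of triangles starting at $e$, you phrase it over undirected edges as in \Cref{real network bounds}(1), but the convexity step and the arithmetic are identical). For the first inequality the paper gives a combinatorial proof via the same construction---from $t_e^2$ pairs of triangles on the directed edge $e$ one builds closed $4$-walks, so $W_4(G)\geq \sum_e t_e^2 \geq (\sum_e t_e)^2/(nd)=W_3(G)^2/(nd)$---and then remarks immediately afterward that the spectral Cauchy--Schwarz argument you give is an alternative. So the only genuine difference is that the paper treats both inequalities with one edge-local construction, while you split off the spectral route for $W_4$; both are equally short.
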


\begin{proof}
	For each directed edge $e=(u,v)$ let $t_e$ be the number of walks that traverse a triangle with first edge $(u,v)$. For each edge $(u,v)$ we can construct $t_e^2$ four walks including ${t_e \choose 2}$ four cycles as follows. Select two triangles $(u,v,a)$ and $(u,v,b)$. The closed walk $(u,b, v,a)$ is a closed four walk. When $a \not=b$ the walk is a four cycle. Note $C_3(G)=W_3(G)$.  It follows that 
	$$W_4(G) \geq \sum_{e\in E(G)} t_e^2 \geq nd \bfrac{W_3(G)}{nd}^2 \quad \text{ and } \quad C_4(G) \geq \sum_{e\in E(G)} {t_e \choose 2} \geq nd \bfrac{C_3(G)}{nd} \brac{\frac{C_3(G)}{nd}-1}.$$
\end{proof}

 Holder's inequality implies that for any finite set of $c_i \in \R^{\geq 0}$,  
\[
\brac{\sum_i c_i^2} \brac{\sum_i c_i^4}  \geq \brac{\sum_i c_i^3}^2.
\]
The first part of the lemma also follows from this observation. 

Using these properties, we now prove \Cref{three four}.

\begin{proof} (of \Cref{three four}) Let $(G_i)$ be a graph sequence with increasing degree, and let $\alpha$ be the 4-sparsity exponent of the sequence. 

\textbf{Case 1: } $\alpha>1/2$. By \Cref{34 relationship} for each graph $G_i$ in the sequence satisfies
$$W_4(G_i,\alpha) \geq W_3(G_i,\alpha)^2.$$
It follows that $w_4\geq w_3$. If $w_3 \not=0$, the ROC family $(\mu, \alpha)$ where  $\mu$ is the distribution with support one on $m= w_4^2/w_3^3$ and $q=w_3^2/w_4 $ achieves the limit $(w_3, w_4)$. If $w_3=0$, the ROC family $(\mu, \alpha)$ where $\mu$ is the distribution with support one on $m=w_4$, $q=1$, and $\beta=1$ achieves the limit $(w_3, w_4)$. 

\textbf{Case 2: } $\alpha=1/2$. It suffices to show that the cycle counts $T((w_3, w_4))=(w_3,w_4-2)$ are the moments of some distribution. 
By \Cref{34 relationship}, $C_4(G) \geq C_3(G) \brac{ \frac{C_3(G)}{nd} -1}=W_3(G) \brac{ \frac{W_3(G)}{nd} -1}$. Let $T_4(G)$ be the number of closed four walks that trace a path of length two. The number of two paths is $\sum_v {deg(v) \choose 2} \geq n {d \choose 2}$, and each two path contributes four closed four walks.  Each edge contributes two closed four walks. It follows that $$W_4(G)= C_4(G) + T_4(G) + nd \geq W_3(G) \brac{ \frac{W_3(G)}{nd} -1} + 2 nd(d-1) + nd,$$
and so $$W_4(G_i, 1/2) \geq W_3(G_i, 1/2)^2 + 2-\Theta\brac{1/d}.$$
Since $d \to \infty$, the term $\Theta\brac{1/d} \to 0$. It follows that  $w_4 \geq w_3^2+2$. 
 If $w_3 \not=0$, the ROC family $(\mu, \alpha)$ where $\mu$ is the distribution with support one on $m=(w_4-2)^2/w_3^3$ and $q=w_3^2/(w_4-2)$ achieves the limit $(w_3, w_4)$. If $w_3=0$, the ROC family $(\mu, \alpha)$ where $\mu$ is the distribution with support one on $m=w_4-2$, $q=1$, and $\beta=1$ achieves the limit $(w_3, w_4)$. 
 \end{proof}

\subsubsection{Achievability of limits of general sequences}\label{achieve con}

In this section we prove \Cref{ach}, which characterizes achievable $k$-limits for sparsity exponent greater than half and half. Additionally, we prove the analogous characterization for full achievability, as stated in the following theorem. 

\begin{lemma}[Full Walk Count Achievability Lemma]\label{full-ach} \quad
	\begin{enumerate}
		\item \label{full more than half} \emph{(Sparsity exponent $>1/2$)} A limit vector $(w_3, w_4, \dots )$ is achievable by ROC with sparsity exponent greater than $1/2$ if and only if there exists $ \gamma \in [0,1]$, $s_0, s_1,  \dots,$ $ t_0, t_2, \dots \in \R^{+}$, $s_2, t_2 \leq 1$ such that $(s_0, s_1, s_2,\dots )$ and  $(t_0, t_2, \dots )$ satisfy the full Stieltjes condition and for all $j \geq 3$
		$$w_j = \begin{cases} \gamma s_j & j \text{ odd}\\ \gamma s_j +(1-\gamma) t_j & j \text{ even}. \end{cases} $$ 
		\item\label{full half} \emph{(Sparsity exponent $1/2$)}
		Let $T((c_3, c_4, \dots c_k))=(w_3, w_4, \dots w_k)$ be the transformation of a vector given in \Cref{relationship}. The limit vector $(w_3, w_4, \dots w_k)$ is achievable by ROC with sparsity exponent $1/2$ if and only if there exists $ \gamma \in [0,1]$, $s_0, s_1,  \dots,$ $ t_0, t_2, \dots \in \R^{+}$, $s_2, t_2 \leq 1$ such that $(s_0, s_1, s_2,\dots )$ and  $(t_0, t_2, \dots )$ satisfy the full Stieltjes condition and for all $j \geq 3$
		$$c_j = \begin{cases} \gamma s_j & j \text{ odd}\\ \gamma s_j +(1-\gamma) t_j & j \text{ even}. \end{cases} $$ 
	\end{enumerate}
	
\end{lemma}

The  question underlying achievability is how to determine when a vector is the vector of normalized cycle counts of some ROC family.  Note that the normalized cycle counts $(c(3), c(4), \dots c(k))$ of the family $ROC(n,d,\mathcal{D})$  are the moments of a discrete probability distribution over values determined by $m_i$, $q_i$ and $\beta_i$ scaled by $x$. The question of whether a vector can be realized as the vector of normalized cycle counts for some ROC family is a slight variant of the Stieltjes moment problem, which gives necessary and sufficient conditions for a sequence to be the moment sequence of some distribution with positive support. 

Our question differs in two key ways.
First, the second moment is not directly specified; instead we obtain an upper bound on the second moment from the restriction that $$x\brac{\sum_{i \in B^c} \mu_i m_i^2 p_i+ 2\sum_{i \in B} \mu_i m_i^2 p_i}=1.$$  Second, for achievability of $k$-limits we are interested in when a vector is the prefix of some moment sequence. 
 

The proof of the Full Walk Count Achievability Lemma (\Cref{full-ach}) relies on the classical solution to the Stieltjes moment problem (\Cref{stieltjes classic}), and the proof of the Walk Count Achievability Lemma (\Cref{ach}) uses a variant for truncated moment vectors (\Cref{stieltjes}). We use these lemmas to show \Cref{plain} and \Cref{bipartite}, which together with \Cref{polynomials}, directly imply the necessary and sufficient conditions given in \Cref{ach,full-ach}. 
Finally we prove \Cref{char} which gives a sufficient local condition to guarantee that a sequence can be extended to satisfy the Stieltjes condition. The proof of this lemma establishes the semi-definiteness of Hankel matrices of sequences satisfying a logconcavity condition.

The Stieltjes moment problem was first studied in \cite{stieltjes1894recherche}. We will apply variants as stated in \cite{sho43} and \cite{schmudgen2017moment}.

\begin{lemma}[Stieltjes moment problem, Theorem 1.3 of \cite{sho43}]\label{stieltjes classic}
A sequence $\mu=(\mu_0, \mu_1, \mu_2, \dots)$ is the moment sequence of a distribution with finite positive support of size $k$ if there exists $\{(x_i, t_i)\}_{1 \leq i \leq k}$ with $x_i, t_i >0$ such that $\sum_{i=1}^k x_i t_i^\ell= \mu_\ell$ for all non-negative integers $\ell$. A vector $\mu$ is a moment sequence with positive support  of size $k$ if and only if the full Stieltjes condition with parameter $k$ given in \Cref{s con} is satisfied.
\end{lemma}

\begin{lemma}[truncated Stieltjes moment problem, Theorems 9.35 and 9.36 of \cite{schmudgen2017moment}]\label{stieltjes}
	A vector $\mu=(\mu_0, \mu_1, \mu_2, \dots, \mu_n)$ is the truncated moment sequence of a distribution with finite positive support of size $k$ if there exists $\{(x_i, t_i)\}_{1 \leq i \leq k}$ with $x_i, t_i >0$ such that $\sum_{i=1}^k x_i t_i^\ell= \mu_\ell$ for all $0\leq \ell \leq n$. A vector $\mu$ is a truncated moment sequence with finite support  if and only if the truncated Stieltjes condition given in \Cref{s con} is satisfied.
\end{lemma}

In order to establish that the hypercube vectors satisfy the truncated Stieltjes condition, we will use the following simple implication. 
\begin{lemma}\label{cond-helper}
	If $det\brac{H_{2s}^{(0)}}>0$ for all $0 \leq 2s \leq n$ and  $det\brac{H_{2s+1}^{(1)} }>0$ for all $0 \leq 2s+1 \leq n$ hold with respect to the vector $\mu=(\mu_0, \mu_1, \dots \mu_n)$, then $\mu$ satisfies the truncated Stieltjes condition.
\end{lemma}

\begin{proof}
	In the case of $n=2j+1$ odd, let $H^{(0)}=H_{2j}^{(0)}$ and $H^{(1)}=H_{2j+1}^{(1)}$.  In the case $n=2j$ even, let
	$H^{(0)}=H_{2j}^{(0)}$ and $H^{(1)}=H_{2j-1}^{(1)}$. The
	hypotheses of the lemma and Slyvester's criterion imply that $H^{(0)}\succ 0$ and $H^{(1)} \succ 0$, and so $H^{(0)}$ and $H^{(1)}$ are full rank. 
\end{proof}

\begin{lemma} \label{plain} 
There exists $s_0, s_1,s_2$ with $s_2 \leq 1$ such that $(s_0, s_1, s_2, a_3, \dots a_n)$ satisfies the Stieltjes condition if and only if there exists $x_i, m_i, q_i$ with $x_i, m_i >0$ and $0\leq q_i \leq 1$ satisfying
	\begin{enumerate}
	\item $\sum x_i m_i^2 q_i= 1$
		\item $\sum x_i (m_i q_i)^j= a_j \quad \text{ for all }  3 \leq j \leq n.$
	\end{enumerate} 
Similarly, there exists $s_0, s_1,s_2$ with $s_2 \leq 1$ such that $(s_0, s_1, s_2, a_3, \dots )$ satisfies the full Stieltjes condition if and only if there exists $x_i, m_i, q_i$ with $x_i, m_i >0$ and $0\leq q_i \leq 1$ satisfying (1) and (2) for all $j\geq 3$. 
\end{lemma}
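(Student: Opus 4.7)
The plan is to use the truncated Stieltjes moment problem (\Cref{stieltjes}) as the bridge between the two sides, with the identification $t_i = m_i q_i$ serving as the dictionary between discrete distributions on $[0,\infty)$ and ROC parameters.

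For the forward direction, given $(x_i, m_i, q_i)$ satisfying (1) and (2), I would define $s_j := \sum_i x_i (m_i q_i)^j$ for $j = 0,1,2$. Since $x_i > 0$ and the support points $m_i q_i \ge 0$, the vector $(s_0, s_1, s_2, a_3, \dots, a_n)$ is the truncated moment sequence of a discrete distribution with finite positive support, so \Cref{stieltjes} immediately delivers the Stieltjes condition. The bound $s_2 \le 1$ comes from
\[
s_2 = \sum_i x_i m_i^2 q_i^2 \le \sum_i x_i m_i^2 q_i = 1,
\]
using $q_i \le 1$ in the first inequality and condition (1) in the equality.

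For the backward direction, suppose $(s_0, s_1, s_2, a_3, \dots, a_n)$ satisfies the Stieltjes condition with $s_2 \in (0,1]$ (the case $s_2 = 0$ forces every $t_i = 0$ and hence $a_j = 0$ for all $j \ge 3$, which corresponds to the empty/degenerate representation). By \Cref{stieltjes} this vector is realized as the moments of some discrete distribution on $(0,\infty)$, producing $x_i > 0$ and $t_i > 0$ with $\sum_i x_i t_i^j$ equal to the $j$-th entry. I then need to recover $(m_i, q_i)$ with $m_i q_i = t_i$, $q_i \in (0,1]$, and $\sum_i x_i m_i^2 q_i = 1$. The key device is to take a single common value $q_i = q$ across all $i$; then $m_i = t_i/q$ and
\[
\sum_i x_i m_i^2 q_i = \frac{1}{q}\sum_i x_i t_i^2 = \frac{s_2}{q},
\]
so choosing $q = s_2$ achieves the normalization. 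This $q$ is valid precisely because $s_2 \in (0,1]$. The infinite-moments version of the lemma is proved identically, with \Cref{stieltjes classic} in place of \Cref{stieltjes}.

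I do not anticipate a serious obstacle: the argument is essentially a reparametrization of the same underlying discrete distribution through the map $(m,q)\mapsto mq$. The one observation that makes everything go through is that condition (1) has exactly one degree of freedom left (the common value $q$) once the moment structure $t_i$ is fixed, and that degree of freedom is exactly what is needed to absorb the slack between $s_2$ and $1$ allowed by the hypothesis $s_2 \le 1$.
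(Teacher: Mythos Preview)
Your proposal is correct and essentially identical to the paper's proof: both directions hinge on the identification $t_i = m_i q_i$, with the common choice $q_i = s_2$ used to invert, and the bound $s_2 \le 1$ derived from $q_i \le 1$ together with the normalization $\sum x_i m_i^2 q_i = 1$. The only cosmetic difference is that the paper bounds $s_2 \le \max_i q_i \le 1$ whereas you use $q_i^2 \le q_i$ directly, which amounts to the same thing.
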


\begin{proof} 
	First assume $\brac{s_0, s_1, s_2, a_3, \dots a_n}$ satisfies the Stieltjes condition (or $\brac{s_0, s_1, s_2, a_3, \dots }$ satisfies the full Stieltjes condition) and $s_2 \leq 1$. By \Cref{stieltjes} (or \Cref{stieltjes classic}) there exists a discrete distribution on $(t_1, t_2, \dots t_k)$ where $t_i$ has mass $x_i$, $t_i >0$, and 
	$$
	\sum x_i t_i^j=
	\begin{cases} 
	a_j & 3 \leq j \leq n \text{ (or $j \geq 3$)} \\
	s_i & 0 \leq j \leq 2.
	\end{cases}
	$$
	Let $q_i=s_2$ for all $i$, and $m_i=t_i/s_2$ for all $i$. 
	Observe
	$$\sum x_i (m_i q_i)^j= \sum x_i \bfrac{t_i s_2}{s_2}^j= a_j
	 \text{ for all } 3 \leq j \leq n \text{ (or for all $j \geq 3$)} $$
	$$\sum x_i m_i^2 q_i= \sum \frac{x_i t_i^2}{s_2}=1.$$
	
	Next assume there exists $x_i, m_i, q_i$ satisfying the given conditions. Let $s_j= \sum x_i t_i^j$ for $j\in \{1,2,3\}$ and $t_i=m_i q_i$. Note
	$$
	\sum x_i t_i^j = \sum x_i(m_i q_i)^j= a_j \text{ for all }  3 \leq j \leq n \text{ (or for all $j \geq 3$)} ,
	$$ and so $\brac{s_0, s_1, s_2, a_3, \dots a_n}$  (or $\brac{s_0, s_1, s_2, a_3, \dots }$) is a moment vector of a finite distribution with positive support. It follows by Lemma \ref{stieltjes} (or \Cref{stieltjes classic}) that the moment vector satisfies the (full) Stieltjes condition. To see that $s_2 \leq 1$, let $q= \max_i q_i$ and observe 
	$$
	s_2= \sum x_i m_i^2 q_i^2 \leq q \sum x_i m_i^2 q_i  = q\leq 1.
	$$
	\end{proof}

\begin{lemma} \label{bipartite}
	There exists $s_0, s_2$ with $s_2 \leq 1$ such that $(s_0, s_2, a_4, \dots a_n)$ satisfies the Stieltjes condition if and only if there exists $x_i, m_i, q_i$ with $x_i, m_i >0$ and $0\leq q_i \leq 1$ satisfying
	\begin{enumerate}
		\item $2 \sum  x_i m_i^2 q_i= 1$
		\item $2 \sum  x_i (m_i q_i)^{2j}= a_{2j} \quad \text{ for all }  2 \leq j \leq n$.
	\end{enumerate} 
	Similarly, there exists $s_0, s_2$ with $s_2 \leq 1$ such that $(s_0, s_2, a_4, \dots a_n)$ satisfies the full Stieltjes condition if and only if there exists $x_i, m_i, q_i$ with $x_i, m_i >0$ and $0\leq q_i \leq 1$ satisfying (1) and (2) for all $j \geq 4$. 
\end{lemma}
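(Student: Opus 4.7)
The plan is to mirror the proof of \Cref{plain}, with the single modification that since only even moments $a_{2j}$ appear in the bipartite setting, I work with the substitution $u = t^2$ (where $t = m_iq_i$) to convert even-moment data in the variable $t$ into an ordinary moment sequence in $u$.

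For the forward direction, suppose $(s_0,s_2,a_4,\dots,a_n)$ satisfies the Stieltjes condition with $s_2 \le 1$. Reading this as a moment vector $(\mu_0,\mu_1,\mu_2,\dots)$, \Cref{stieltjes} supplies a finite set of positive atoms $u_i$ with positive weights $y_i$ such that $\sum y_i = s_0$, $\sum y_i u_i = s_2$, and $\sum y_i u_i^j = a_{2j}$ for each $2 \le j \le \lfloor n/2\rfloor$. Following the recipe from the plain case, I would set $q_i = s_2$ (which is legal because $s_2 \le 1$; if $s_2 = 0$ then all $a_{2j} = 0$ and we may take a trivial zero-atom construction), $m_i = \sqrt{u_i}/s_2$, and $x_i = y_i/2$. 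Then $(m_iq_i)^{2j} = u_i^j$, so condition (2) reduces to $2\sum x_i u_i^j = \sum y_i u_i^j = a_{2j}$, and condition (1) reduces to $2\sum (y_i/2)(u_i/s_2^2)\,s_2 = (1/s_2)\sum y_i u_i = s_2/s_2 = 1$.

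For the reverse direction, given $x_i, m_i, q_i$ satisfying (1) and (2), I would define $s_0 := 2\sum x_i$ and $s_2 := 2\sum x_i(m_iq_i)^2$. Then the vector $(s_0, s_2, a_4, \dots, a_n)$ is exactly the moment sequence of the discrete measure placing mass $2x_i$ at the positive atom $(m_iq_i)^2$, and \Cref{stieltjes} yields the Stieltjes condition. The bound $s_2 \le 1$ follows from $s_2 = 2\sum x_i m_i^2 q_i^2 \le 2(\max_i q_i)\sum x_i m_i^2 q_i = \max_i q_i \le 1$, using (1) in the middle equality. The infinite case is identical, invoking \Cref{stieltjes classic} in place of \Cref{stieltjes}.

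I do not expect a real obstacle here: the squaring substitution cleanly accounts for the only structural difference between the bipartite and plain cases, namely the absence of odd moments $\sum x_i t_i^{2k+1}$ in the former. The only mildly delicate point is the degenerate setting $s_2 = 0$, which forces $a_{2j} = 0$ for all $j \ge 2$ and should be handled by a trivial separate case.
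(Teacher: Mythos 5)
Your proof takes essentially the same route as the paper: reduce the even-moment data to an ordinary Stieltjes moment problem in the squared variable $u = t^2$, recover atoms $u_i$ with weights $y_i$ from \Cref{stieltjes}, and invert via $q_i = s_2$, $m_i = \sqrt{u_i}/s_2$; the paper simply absorbs the factor of $2$ by rescaling the moment vector to $\left(\tfrac{s_0}{2}, \tfrac{s_2}{2}, \tfrac{a_4}{2}, \dots\right)$ before invoking \Cref{stieltjes} rather than halving the weights afterward, which is the same bookkeeping. One small caveat: your proposed handling of the degenerate case $s_2 = 0$ (a ``trivial zero-atom construction'') does not actually work, since condition (1) would then read $0 = 1$; when $a_4 = 0$ the stated equivalence fails outright (condition (2) with $j=2$ forces every $q_i = 0$, contradicting (1)), and the paper's own proof silently presumes $s_2 > 0$ as well.
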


\begin{proof} 
	First assume $\brac{s_0, s_2, a_4, a_6, \dots a_n}$ satisfies the Stieltjes condition (or $\brac{s_0, s_2, a_4, a_6, \dots}$ satisfies the full Stieltjes condition) and $s_2 \leq 1$. It follows that $\brac{\frac{s_0}{2}, \frac{s_2}{2}, \frac{a_4}{2}, \frac{a_6}{2}, \dots \frac{a_n}{2}}$ satisfies the Stieltjes condition (or $\brac{\frac{s_0}{2}, \frac{s_2}{2}, \frac{a_4}{2}, \frac{a_6}{2}, \dots} $ satisfies the full Stieltjes condition) because multiplying all entries of a matrix by a positive number does not change the sign of the determinant. By \Cref{stieltjes} (or \Cref{stieltjes classic}) there exists a discrete distribution on $(t_1, t_2, \dots t_k)$ where $t_i$ has mass $x_i$, $t_i >0$, and
	$$
	2\sum x_i t_i^j=
	\begin{cases} 
	a_{2j} & 2 \leq j \leq n \text{ (or $j \geq 4   t$) }\\
	s_{2j} & 0 \leq j \leq 1.
	\end{cases}
	$$
	Let  $q_i=s_2$, and $m_i=\sqrt{t_i}/s_2$ for all $i$. 
	Observe
	$$2 \sum x_i (m_i q_i)^{2j}= \sum x_i  t_i^j= a_{2j} \text{ for all } 2 \leq j \leq n \text{ (or $j \geq 4$) }$$
	$$2\sum x_i m_i^2 q_i=  \frac{2}{s_2} \sum x_i t_i=1.$$
	
	Next assume there exists $x_i, m_i, q_i$ satisfying the given conditions. Let $s_0= 2 \sum x_i$ and $t_i=(m_i q_i)^2$ for all $i$. Note 
	$$
	\sum x_i t_i^j = \sum \frac{x_i}{s_0}(m_i q_i)^{2j}= \frac{a_j}{2} \text{ for all }  2 \leq j \leq n \text{ (or $j \geq 4$)}.
	$$
	Let $q= \max_i q_i$,  $s_1=2 \sum x_i t_i$, and observe
	$$
	s_1=2 \sum x_i t_i= 2\sum x_i m_i^2 q_i^2 \leq 2q \sum x_i m_i^2 q_i  = q \leq 1.
	$$
	It follows that $\brac{\frac{s_0}{2}, \frac{s_2}{2},  \frac{a_4}{2}, \dots \frac{a_n}{2}}$ is a moment vector (or $\brac{\frac{s_0}{2}, \frac{s_2}{2},  \frac{a_4}{2}, \dots }$ is a moment vector), and therefore by \Cref{stieltjes} (or \Cref{stieltjes}) satisfies the (full) Stieltjes condition. It follows that $\brac{s_0, s_1,  a_2, \dots a_n}$ also satisfies the Stieltjes condition (or $\brac{s_0, s_1,  a_2, \dots}$ also satisfies the full Stieltjes condition) because multiplying all entries of a matrix by a positive number does not change the sign of the determinant. 
\end{proof}

\begin{proof}(of \Cref{ach,full-ach})
First assume the vectors of $s_i$ and $t_i$ satisfy the hypotheses. Then by \Cref{plain}, there exists $(x_i, m_i, q_i)$ satisfying $\sum x_i (m_i q_i)^j= s_j$ and $ \sum x_i m_i^2 q_i=1$. For each triple add the triple $(m_i, q_i, \beta_i=0)$ to the distribution. Soon we will specify the corresponding probability $\mu_i$. By \Cref{bipartite}, there exists $(x_i, m_i, q_i)$ satisfying $2\sum x_i (m_i q_i)^{2j}= t_{2j}$ and $ 2\sum x_i m_i^2 q_i=1$. For each triple add the triple $(m_i, q_i, \beta_i=1)$ to the distribution.
Let $B$ be the set of indices $i$ such that $\beta_i=1$ and $B^c$ be the set of indices $i$ such that $\beta_i=0$. Let $z=\sum_{i \in B} x_i \gamma + \sum_{i \in B^c} x_i (1-\gamma)$. We now define $\mu$ by assigning probabilities to triples $(m_i, q_i, \beta_i)$. If $i \in B^c$, let $\mu_i=x_i \gamma/z$. If $i \in B$, let $\mu_i= x_i (1-\gamma)/z$.  Note $\sum \mu_i=1$, and therefore $\mathcal{D}=(\mu, a)$ is a well-defined ROC family with $$x= 1/ \brac{\sum_{i \in B^c} \mu_i m_i^2 q_i +2\sum_{i \in B} \mu_i m_i^2 q_i }=z.$$ \Cref{polynomials} implies that the family achieves the desired limit with sparsity exponent $a$.

Suppose the limit is achievable by some ROC family $\mathcal{D}=(\mu,a)$ where $a$ is the sparsity exponent. Let $\gamma=x \sum_{i \in B^c} \mu_i m_i^2 q_i$, and so $1-\gamma= 2x \sum_{i \in B}\mu_i m_i^2 q_i$. For each $i \in B^c$, let $x_i= x \mu_i /\gamma$. Note $\sum_{i \in B^c} x_i m_i^2 q_i= 1$, and so by \Cref{plain}, the vector with $s_j = \sum_{i\in B^c} \mu_i (m_i q_i)^j$ satisfies the Stieltjes condition. For each $i \in B$, let $x_i= x \mu_i /(1-\gamma)$. Note $2\sum_{i \in B} x_i m_i^2 q_i= 1$, and so by \Cref{bipartite}, the vector with $t_{2j} = \sum_{i\in B} \mu_i (m_i q_i)^{2j}$ satisfies the Stieltjes condition.  \Cref{polynomials}  implies that $c_j$ or $w_j$ is the appropriate combination of $s_j$ and $t_j$. 
\end{proof}

Finally we show that a similar argument proves the condition for when it is possible to match a $k$-cycle-to-edge vector with a ROC family. Instead of using the sparsity exponent of the sequence to determine how the size of communities in the matching ROC family scale, the communities in the matching ROC family are constant size.

\begin{proof}(of \Cref{thm:kratios})	First assume the vectors of $s_i$ and $t_i$ satisfy the hypotheses. Then by \Cref{plain}, there exists $(x_i, m_i, q_i)$ satisfying $\sum x_i (m_i q_i)^j= s_j$ and $ \sum x_i m_i^2 q_i=1$. For each triple add the triple $(m_i, q_i, \beta_i=0)$ to the distribution. Soon we will specify the corresponding probability $\mu_i$. By \Cref{bipartite}, there exists $(x_i, m_i, q_i)$ satisfying $2\sum x_i (m_i q_i)^{2j}= t_{2j}$ and $ 2\sum x_i m_i^2 q_i=1$. For each triple add the triple $(m_i, q_i, \beta_i=1)$ to the distribution.
	Let $B$ be the set of indices $i$ such that $\beta_i=1$ and $B^c$ be the set of indices $i$ such that $\beta_i=0$. Let $z=\sum_{i \in B} x_i \gamma + \sum_{i \in B^c} x_i (1-\gamma)$. We now define $\mu$ by assigning probabilities to triples $(m_i, q_i, \beta_i)$. If $i \in B^c$, let $\mu_i=x_i \gamma/z$. If $i \in B$, let $\mu_i= x_i (1-\gamma)/z$.  Note $\sum \mu_i=1$, and therefore $(\mu, 0)$ is a well-defined ROC family with $$x= 1/ \brac{\sum_{i \in B^c} \mu_i m_i^2 q_i +2\sum_{i \in B} \mu_i m_i^2 q_i }=z.$$ Note $c(j)=c_j/2$ by construction. For $G \sim ROC(n,d, \mu,0)$ and $d= o \brac{ n^\frac{1}{k-1}}$, \Cref{exp by class cor} implies that $\E {C_j(G)}= \frac{c_j}{2} nd +o\brac{nd}$. The statement follows.

	For the other direction, suppose there is a ROC family that achieves the limit, meaning that for $G$ drawn from the family, $\E {C_j(G)}=  \frac{c_j}{2} nd +o\brac{nd}$.  In a graph drawn from a ROC family with  parameters $\mathcal{D}=(\mu,a)$, the number of $j$ cycles is $\Theta(nd^{1+a(j-2)})$. It therefore must be the case that the ROC family achieves the limit has $a=0$.  Let $(\mu,0)$ be the ROC family. Let $\gamma=x \sum_{i \in B^c} \mu_i m_i^2 q_i$, and so $1-\gamma= 2x \sum_{i \in B}\mu_i m_i^2 q_i$. For each $i \in B^c$, let $x_i= x \mu_i /\gamma$. Note $\sum_{i \in B^c} x_i m_i^2 q_i= 1$, and so by \Cref{plain}, the vector with $s_j = \sum_{i\in B^c} \mu_i (m_i q_i)^j$ satisfies the Stieltjes condition. For each $i \in B$, let $x_i= x \mu_i /(1-\gamma)$. Note $2\sum_{i \in B} x_i m_i^2 q_i= 1$, and so by \Cref{bipartite}, the vector with $t_{2j} = \sum_{i\in B} \mu_i (m_i q_i)^{2j}$ satisfies the Stieltjes condition.  \Cref{polynomials}  implies that $c_j/2$  is the appropriate combination of $s_j$ and $t_j$. 
\end{proof}

\subsubsection{Simple criterion for the the Stieltjes condition.} \label{sc shortcut}

The following lemma provides a convenient criterion that implies the truncated Stieltjes condition. In particular, we use this show that the limit of hypercube sequence is totally $k$-achievable (\Cref{h1}).

\begin{lemma} \label{char}
	Let $s_1, s_2, \dots s_k$ be a vector with $s_1 >0$ satisfying $ s_{x} s_{y}< s_{a} s_{b}$ for all $1\leq a< x \leq  y< b$. Then there exists $s_0>0$ such that $(s_0, s_1, s_2, \dots s_k)$ satisfies the truncated Stieltjes condition.
\end{lemma}

The following is the key lemma for proving \Cref{char}.

\begin{lemma} \label{key}
	Let $s_1, s_2, \dots s_k$ be a vector with $s_1 >0$ and $ s_{x} s_{y}< s_{a} s_{b}$ for all $1\leq a< x \leq  y< b$. Let $H$ be the $\lfloor \frac{k+1}{2} \rfloor \times \lfloor \frac{k+1}{2} \rfloor $  with $H_{ij}= s_{i+j-1}$. Then all leading principal minors of $H$ have positive determinant. 
\end{lemma}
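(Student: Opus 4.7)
The plan is to induct on $n$. To make the induction step work cleanly, I would strengthen the claim to: for every shift $\ell \geq 0$ with $2n-1+\ell \leq k$, the $n \times n$ leading Hankel matrix $H^{(\ell)}_n = (s_{i+j-1+\ell})_{i,j=1}^n$ has strictly positive determinant. The hypothesis is clearly preserved under shifts, so this stronger form is a natural inductive frame. The base cases $n = 1, 2$ are immediate: $\det H^{(\ell)}_1 = s_{1+\ell} > 0$ (positivity of each $s_i$ is first deduced by iterating the hypothesis with $(a,x,y,b) = (1,i,i,i+2)$, seeded by $s_1 > 0$), and $\det H^{(\ell)}_2 = s_{1+\ell} s_{3+\ell} - s_{2+\ell}^2 > 0$ is the hypothesis applied with $(a,x,y,b) = (1+\ell, 2+\ell, 2+\ell, 3+\ell)$.

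For the inductive step I would apply the Desnanot--Jacobi (Sylvester) determinantal identity to $H^{(\ell)}_n$ at its $(1,n)$ and $(n,1)$ corners. Direct identification of the four arising submatrices shows that each is itself a Hankel matrix of an appropriately shifted sub-sequence, giving
\[
\det H^{(\ell)}_n \cdot \det H^{(\ell+2)}_{n-2} \;=\; \det H^{(\ell)}_{n-1} \cdot \det H^{(\ell+2)}_{n-1} \;-\; \bigl(\det H^{(\ell+1)}_{n-1}\bigr)^2.
\]
By the strengthened inductive hypothesis applied to shifts $\ell,\ell+1,\ell+2$, all four Hankel determinants of size $n-1$ or $n-2$ on the right-hand side are strictly positive, so positivity of $\det H^{(\ell)}_n$ reduces to the strict log-convexity of shifted Hankel determinants
\[
\det H^{(\ell)}_{n-1} \cdot \det H^{(\ell+2)}_{n-1} \;>\; \bigl(\det H^{(\ell+1)}_{n-1}\bigr)^2.
\]

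The hard part will be establishing this last log-convexity directly from the hypothesis. My plan is a Cauchy--Schwarz-type argument: view $D_t := \det H^{(\ell+t)}_{n-1}$ for $t = 0, 1, 2$ as the Gram determinants of a fixed $(n-1)$-fold wedge product under three "multiplication by $x$" deformations of a common bilinear pairing on polynomials of degree $< n-1$. Cauchy--Schwarz for the resulting multilinear inner products yields $D_0 D_2 \geq D_1^2$. The strict version of the hypothesis---in particular those inequalities $s_x s_y < s_a s_b$ with $a+b > x+y$, which strengthen the classical TP$_2$ inequalities that correspond only to the $a+b = x+y$ case---is precisely what rules out the degenerate equality case of Cauchy--Schwarz and upgrades the bound to the strict $D_0 D_2 > D_1^2$. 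Turning this outline into a rigorous derivation, and in particular quantifying at each step which strong-hypothesis inequality suffices to eliminate the equality case, is the most delicate part of the proof; the rest is bookkeeping via Desnanot--Jacobi and the inductive hypothesis.
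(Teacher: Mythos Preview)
Your Desnanot--Jacobi reduction is correct and cleanly reduces the inductive step to the log-convexity inequality $D_0 D_2 > D_1^2$ with $D_t = \det H^{(\ell+t)}_{n-1}$. But the proposed Cauchy--Schwarz argument for this inequality is circular. Writing $B(x^i, x^j) = s_{i+j+1+\ell}$, one has $D_0 = \det\bigl(B(u_i,u_j)\bigr)$, $D_1 = \det\bigl(B(u_i,v_j)\bigr)$, $D_2 = \det\bigl(B(v_i,v_j)\bigr)$ for the two bases $(u_i) = (1,\ldots,x^{n-2})$ and $(v_i) = (x,\ldots,x^{n-1})$. The Gram--Cauchy--Schwarz bound $D_1^2 \le D_0 D_2$ requires $B$ to be positive semidefinite on $\mathrm{span}(u_i,v_i) = \mathrm{span}(1,\ldots,x^{n-1})$, an $n$-dimensional space---but that is exactly the assertion $\det H^{(\ell)}_n \ge 0$ you are trying to prove. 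Indeed, your own identity gives $D_0 D_2 - D_1^2 = \det H^{(\ell)}_n \cdot \det H^{(\ell+2)}_{n-2}$, and the second factor is positive by induction, so $D_0 D_2 \ge D_1^2$ is literally equivalent to the goal. Your strengthened inductive hypothesis only yields positive definiteness of $B$ on each $(n-1)$-dimensional subspace separately, and that is not enough: the sequence $(s_1,\ldots,s_5) = (1,\,2,\,5,\,13,\,33.9)$ has every size-$1$ and size-$2$ shifted Hankel determinant strictly positive, yet $\det H^{(0)}_3 = -0.1 < 0$. (This sequence violates the paper's hypothesis precisely at an $a+b \neq x+y$ instance, namely $s_3^2 < s_1 s_4$.) So the full hypothesis $s_x s_y < s_a s_b$ must do genuine work in establishing $D_0 D_2 > D_1^2$, not merely upgrade a weak inequality to a strict one; your outline does not indicate how.

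The paper's proof takes a different route that avoids this circularity: rather than Desnanot--Jacobi, it uses a Schur-complement factorization $H_n = AB$ with $H_{n-1}$ sitting in the top-left block of $A$, obtaining $\det H_n = \det H_{n-1}\cdot(s_{2n-1} - L)$ for an explicit linear combination $L$. It then shows $L < s_{2n-1}$ by a short term-by-term estimate that directly invokes the hypothesis inequalities $s_{n+i}\,s_{n+j} < s_{2n-1}\,s_{i+j+1}$ for $i,j \in [0,n-2]$.
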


\begin{proof} 
	Let $H_k$ denote the $k^{th}$ leading principal minor of $H$ (the square sub-matrix obtained by restricting to the first $k$ rows and first $k$ columns). We show that $det(H_k)>0$ by induction on $k$. Note $det(H_1)=s_1 >0$. Next assume $det(H_{k-1})>0$. Write $H_k=AB$ where $A$ and $B$ are in the form displayed here. 
	$$H_k= 
	\begin{pmatrix}
	s_1 & s_2 & s_3 & \dots & s_k\\
	s_2 & s_3 &     &       &\vdots\\
	s_3 &     &     &       &\vdots \\
	\vdots  &     &     & \ddots      &\vdots \\
	s_k  & \dots    &    \dots &\dots       & s_{2k-1}\\
	\end{pmatrix}
	=	\begin{pmatrix}
	s_1 & s_2 & \dots & s_{k-1} & 0\\
	s_2 &      &       &\vdots& 0\\
	\vdots &          &     \ddots  & \vdots &\vdots\\
	s_{k-1}  &       \dots   & \dots  & s_{2k-3}   & 0 \\
	0  &     0&  \dots         & 0&1\\
	\end{pmatrix}
	\begin{pmatrix}
	1	& 0 & \dots & 0 & x_1\\
	0 & 1 &            &\vdots&x_2\\
	\vdots &     &         \ddots  & \vdots&\vdots\\
	0  &  \dots   &  \dots     & 1   & x_{k-1} \\
	s_k  &s_{k+1}     &  \dots         & s_{2k-2} &s_{2k-1}\\
	\end{pmatrix}.
	$$	
	Note $det(A)= det(H_{k-1})$, which is positive by the inductive hypothesis. It follows there exists a unique solution of real values $x_1, x_2, \dots x_{k-1}$ so that  $H_k=AB$. Since $det(H_k)=det(A) det(B)$ and $det(A)>0$, it suffices to show that $det(B) >0$ to prove the inductive hypothesis. 
	
	Note $det(B)=s_{2k-1} - L $ where 
	$$L= \begin{pmatrix} s_k & s_{k+1} & \dots & s_{2k-2} \end{pmatrix} \begin{pmatrix} x_1& x_2& \dots &x_{k-1}\end{pmatrix}^{T}.$$ 
	By construction of $A$ and $B$, 
	\begin{equation}
	\label{m r}	\begin{pmatrix}
	s_1 & s_2& \dots & s_{k-1}\\
	s_2 &     &       &\vdots\\
	\vdots  &          & \ddots      &\vdots \\
	s_{k-1}  & \dots    &    \dots      & s_{2k-3}\\
	\end{pmatrix}
	\begin{pmatrix} x_1\\x_2\\\vdots\\ x_{k-1}\end{pmatrix}
	= \begin{pmatrix} s_k \\ s_{k+1} \\ \vdots \\ s_{2k-2} \end{pmatrix}.
	\end{equation}
	For $i \in [0,k-2]$, define $$\alpha_i= \frac{ s_{k+i}}{s_{i+1}+ \dots + s_{k-1+i}},$$ and let $\alpha=\max_{i \in [0,k-2]} \alpha_i$.
	Therefore for all $i \in [0,k-2]$
	$$  x_{i+1} s_{k+i}= \alpha_i x_{i+1} (s_{i+1}+ \dots + s_{k-1+i}) \leq \alpha   x_{i+1} (s_{i+1}+ \dots + s_{k-1+i}).$$
	Summing the above equation over all $i \in [0,k-2]$ and applying equality \Cref{m r} yields $$L\leq \alpha (s_k+ s_{k+1} + \dots + s_{2k-2}).$$ To prove $det(B)=s_{2k-1}-L >0$ we show that for all $i \in [0,k-2]$, $\alpha_i (s_k+ s_{k+1} + \dots + s_{2k-2}) < s_{2k-1}$, or equivalently 
	\begin{equation} \label{last eq} s_{k+i}(s_k+ s_{k+1} + \dots + s_{2k-2}) < s_{2k-1} 
	(s_{i+1}+ \dots + s_{k-1+i}).\end{equation} Note by assumption $s_{k+i}s_{k+j}< s_{2k-1} s_{i+j+1}$ for all $i, j \in[ 0, k-2]$. Therefore the $j^{th}$ term on the left side of \Cref{last eq} is less than the $j^{th}$ term on the right side of \Cref{last eq}, and so \Cref{last eq} holds. 
\end{proof}

\begin{proof}(of \Cref{char}.)
	Given the vector $(s_0,s_1, \dots s_k)$ define Hankel matrices as described in \Cref{s con}. In the case of $k=2j+1$ odd, let $H^{(0)}=H_{2j}^{(0)}$ and $H^{(1)}=H_{2j+1}^{(1)}$.  In the case $k=2j$ even, let $H^{(0)}=H_{2j}^{(0)}$ and $H^{(1)}=H_{2j-1}^{(1)}$. 
	\Cref{key} implies that all leading principal minors of $H^{(1)}$ have positive determinant. By \Cref{cond-helper}, it remains to show that there exists $s_0>0$ such that all leading principal minors of $H^{(0)}$ have positive determinant. 
	
	Let $H'$ be  $H^{(0)}$ with the first row and column deleted. The $i^{th}$ leading principal determinant of $H^{(1)}$ has the form $s_0 h_i +b_i$ where $h_i$ is the $(i-1)^{st}$ principal determinant of $H'$. (Taking the determinant via expansion of the first row makes this clear.) Note that \Cref{key} applied to the vector $(s_2, s_3, \dots s_k)$ guarentees that each $h_i>0$. Therefore, it is possible to pick $s_0$ sufficiently large such that all principal determinants $s_0 h_i +b_i$ of $H^{(1)}$ are positive.
\end{proof}

\subsection{Examples of achievable limits: hypercube and rook sequences } \label{sec:hc and rook}

In this section, we prove \Cref{h1} and \Cref{r1} which state that the limit of the sequence of hypercubes is totally $k$-achievable and the limit of the sequence of rook graphs is fully achievable respectively. First we provide the ROC parameters which achieve the $6$-limit of the hypercube.

\begin{remark} 
The ROC family $(\mu, 1/2)$ where $\mu$ is the distribution with support size one on $(8, 1/4,1)$ achieves the $6$-limit of the hypercube sequence. To achieve longer limits, the distribution will have larger support. 
\end{remark}

We now prove \Cref{h1}. Recall from \Cref{hypercube} that the sparsity exponent of the hypercube sequence is $1/2$.  Therefore, to prove the theorem we apply \Cref{half} of \Cref{ach}, which states the vector $(w_3, w_4, \dots w_k)$ can be achieved by ROC if the normalized cycle count vector $(c_3, c_4, \dots c_k)$ corresponding to the transform $T$ can be extended to satisfy the Stieltjes condition. The following lemma gives the cycle vector for the hypercube.

\begin{lemma} \label{cube cycles} Recall from \Cref{hypercube} that the limit of the hypercube sequence $(G_d)$ is
	$(w_3, w_4, \dots)$ where 
	$$w_j = \begin{cases} 
	(j-1)!! & \text{for $j$ even}\\
	0 & \text{for $j$ odd}.
	\end{cases}
	$$ 
	For $T$ the cycle transform given in $\Cref{relationship}$, $T((0, s_2, 0, s_3, 0, \dots))= (0, w_4,0, w_6, 0, \dots)$ where $s_1=1$ and $s_n=(n-1) \sum_{j=1}^{n-1} s_j s_{n-j}$.
\end{lemma}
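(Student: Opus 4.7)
The plan is to prove the lemma combinatorially by interpreting both sides of the claimed identity as counts of matchings on $[2n]$.

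First I identify $s_n$ combinatorially. Since the $d$-dimensional hypercube $Q_d$ is vertex-transitive, it is essentially $k$-locally regular for every $k$. By \Cref{hypercube} the hypercube sequence has sparsity exponent $1/2$ and $w_{2n}=(2n-1)!!$, so \Cref{locally reg} yields $w_k=T((c_3,\dots,c_k))$ where $c_k=\lim_d C_k(G_d)/(2^d d^{k/2})$. Bipartiteness forces $c_{2j+1}=0$, and $c_2=1$. A simple $2n$-cycle through a fixed vertex of $Q_d$ corresponds, in the leading order in $d$, to a choice of $n$ distinct coordinates together with a sequence $(i_1,\dots,i_{2n})$ using each chosen coordinate exactly twice; pairing the two positions sharing each coordinate yields a matching $M$ on $[2n]$. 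The walk is simple iff no proper subinterval $[i,j]\subsetneq[2n]$ is a union of matched pairs of $M$; call such $M$ \emph{irreducible}. Hence $s_n$ equals $a_n$, the number of irreducible matchings on $[2n]$.

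Second, I would establish the $T$-identity via a bijection. Given a matching $M$ on $[2n]$, apply the cycle permutation procedure of \Cref{cycle permutation} to its corresponding walk in $Q_d$ (for $d$ large) to obtain a cycle permutation $P(M)$ with even cycle lengths $2b_1,\dots,2b_k$. The procedure iteratively extracts innermost closed subintervals of $M$, so the restriction of $M$ to each extracted interval is itself an irreducible matching. Conversely, given a cycle permutation $P$ with cycle lengths $2b_1,\dots,2b_k$ together with an irreducible matching $\mu_i$ on $[2b_i]$ for each cycle, we reconstruct $M$ uniquely by placing $\mu_i$ on the positions marked by the $i$-th cycle. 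Counting both ways yields
\[
(2n-1)!! \;=\; \sum_{\pi} \frac{(2n)!}{(\prod t_i!)(2n+1-\sum t_i)!}\prod a_{b_i}^{t_i} \;=\; T((0,a_2,0,a_3,\dots))_{2n},
\]
where $\pi$ ranges over partitions of $n$ with distinct parts $b_i$ of multiplicities $t_i$.

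It remains to show that $a_n$ itself satisfies the recurrence $a_n=(n-1)\sum_{j=1}^{n-1} a_j a_{n-j}$ with $a_1=1$, a classical identity for the number of indecomposable chord diagrams. A direct combinatorial argument would mark an irreducible matching at a ``cut index'' from $\{1,\dots,n-1\}$ and exhibit a bijection with ordered pairs $(M_1,M_2)$ of irreducible matchings whose sizes sum to $n$. An alternative is to derive the equivalent ODE $2xFF'=F^2+F-x$ for $F(x)=\sum_{n\geq 1} a_n x^n$ from the identity above together with the generating function for $(2n-1)!!$, and verify that this ODE is equivalent to the recurrence. The main obstacle is this last step: constructing the cut-and-split bijection requires careful analysis of the arc structure of $M$, and the generating-function route requires an independent derivation of the ODE that does not circularly rely on the recurrence one is trying to prove.
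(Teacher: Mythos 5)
Your approach matches the paper's up to your first paragraph and the final step: the paper also applies \Cref{locally reg} via vertex-transitivity, reduces to showing $C_{2n}(G_d)/(2^d d^n)\to$ (number of irreducible chord diagrams / link diagrams on $2n$ points), and constructs precisely the same correspondence between coordinate-change strings and matchings/link diagrams, identifying simple cycles with the irreducible ones. So steps one and three are right.

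Your second paragraph — the proposed bijective proof that $(2n-1)!! = T((0,a_2,0,a_3,\dots))_{2n}$ — is redundant. That identity is exactly what \Cref{locally reg} gives you once you have computed the cycle counts: you already wrote $w_k = T((c_3,\dots,c_k))$ in your first paragraph, and $w_{2n}=(2n-1)!!$ is known from \Cref{hypercube}. Establishing a second, independent proof of the $T$-identity does no work in this lemma; moreover, the sketched ``iteratively extract innermost closed subintervals'' procedure needs care, because the cycle permutation procedure of \Cref{cycle permutation} extracts the \emph{first repeated vertex} on a modified traversal, which is not quite the same as peeling off irreducible blocks of the chord diagram, and making those two decompositions line up would require a nontrivial argument of its own. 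Since you do not need the paragraph, I would simply drop it.

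The genuine gap is the one you flag yourself: you never actually prove that the number $a_n$ of irreducible matchings on $[2n]$ satisfies the recurrence $a_n=(n-1)\sum_{j=1}^{n-1}a_j a_{n-j}$, i.e.\ that $a_n=s_n$. The paper does not prove this either — it cites Stein (\cite{ste78}) for exactly this fact (and the same source is used in \Cref{stein and everett}). This is A000699, and the recurrence is a known combinatorial result; the cleanest fix for your proposal is to cite it rather than attempt the cut-and-split bijection or the generating-function ODE, both of which you correctly identify as nontrivial. If you do want a self-contained proof, the ODE route is the standard one, but you should verify you can derive $2xFF' = F^2 + F - x$ for the ordinary generating function of $a_n$ from the matching interpretation (via the first-return decomposition of a chord diagram) without appealing to the recurrence you are trying to prove; otherwise the argument is circular, as you note.
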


\begin{remark} 
	The even terms of sequence described in \Cref{cube cycles} form the Online Encyclopedia of Integer Sequences\footnote{\href{https://oeis.org}{https://oeis.org}} (OEIS) entry A000699: $1, 1, 4, 27, 248, 2830, \dots $
	\end{remark}

	\begin{proof}(of \Cref{cube cycles}) Note that the hypercube is vertex transitive (meaning the graph's automorphism group acts transitively on its vertices) so the sequence of hypercubes is essentially $k$-locally regular. Therefore, \Cref{locally reg} implies $C((0, w_4,0, w_6, 0, \dots))= (c_3, c_4, c_5, \dots )$ where $$c_i= \lim_{d \to \infty }\frac{C_i(G_d)}{2^d d^{i/2}}$$ where $C_i(G_d)$ is the normalized number of $i$ cycles at a vertex.
		Instead of applying polynomial operations to the vector $(0,w_4, 0, w_6,0, \dots)$ to obtain $(c_3, c_4, c_5, \dots)$ we directly compute $C_k(G_d)$, the number of cycles in the $d$-dimensional hypercube graph. As in \Cref{hypercube}, we think of $k$-walks on the hypercube as length $k$ strings where the $i^{th}$ character indicates which of the $d$ coordinates is changed on the $i^{th}$ edge of the walk. For closed walks each coordinate that is changed must be changed back, so each coordinate that appears in the string must appear an even number of times.  For $1 \leq i \leq k/2$, let $Z_i$ be the number of such strings of length $k$ that involve $i$ coordinates and correspond to a $k$-cycle on the hypercube graph. Since there are $d$ coordinates $Z_i=\Theta(d^{i})= o(d^{k/2})$ for $i<k/2$ and so $$C_k(G_d)= n Z_{k/2} +o\brac{nd^{k/2}}$$ where $n=2^d$ is the number of vertices.
		
		We compute $Z_{k/2}$ by constructing a correspondence between length $k$ strings with $k/2$ characters each appearing twice that represent cycles and irreducible link diagrams. A link diagram is defined as  $2n$ points in a line with $n$ arcs such that each arc connects precisely two distinct points and each point is in precisely one arc. The arcs define a complete pairing of the integers in the  interval $[1, 2n]$. A link diagram is reducible if there is a subset of $j<n$ arcs that form a complete pairing of the integers in a subinterval of $[1,2n]$ and irreducible otherwise. Let $S$ be the set of length $k$  strings in which the characters $1,2, \dots k/2$ each appear twice and the first appearance of character $i$  occurs before the first appearance of $j$ for all $i <j$.  Let $L$ be the set link diagrams $L$ on $k$ points. Let $\overline{S} \subseteq S$ be the subset of strings that correspond  to cycles on the hypercube and let $\overline{L} \subseteq L$ be the set of irreducible link diagrams.
		
		We construct a bijection $f: S \to L$ and show $f$ restricted to $\overline{S}$ gives a bijection between $\overline{S}$ and $\overline{L}$. Given $s\in S$, we  construct a corresponding link diagram $f(s) \in L$ by labeling $k$ points so that the $i^{th}$ point is labeled with the $i^{th}$ character of $s$ and then drawing an arc between each pair of points with the same label. Note $f$ is a bijection. (To produce $f^{-1}(\ell)$ label the arcs $1, 2, \dots k/2$ by order of their left endpoints. Label each point with the label of its arc and read off the string of the labels.) It remains to show that $f(s) \in \overline{L}$ if and only if $s \in \overline{S}$. We prove the contrapositive. Suppose $s \not \in \overline{S}$. Then the hypercube closed walk corresponding to $s$ is not a cycle. Therefore there exists $j$ and steps $i , i+1, \dots i+j$ of the walk that make a $j/2$ cycle. (Here by convention traversing an edge twice is a 2-cycle.) Since $i , i+1, \dots i+j$ form a cycle, each coordinate that was changed between step $i$ and step $i+j$ must have been changed back. Therefore each character that appears in the interval $[i, i+j]$ appears appears twice. It follows $f(s)$ has a complete pairing of the integers in the subinterval $[i, i+j]$ and therefore is reducible. Next suppose $\ell \not \in \overline{L}$. Then there exists a subinterval $[i, i+j] \not =[1, 2n]$ with a complete pairing of the integer points. Therefore the walk corresponding to $f^{-1}(\ell)$ is not a cycle because the walk visits the same vertex before  step $i$ and after step $i+j$. It follows that $f^{-1}(\ell) \not \in \overline{S}$.

		Thus $|\overline{L}|=|\overline{S}|$. See \cite{ste78} for a proof that $|L|= s_{k/2}$. There are  $d(d-1) \dots (d-k/2+1)= d^{k/2} + o\brac{d^{k/2}}$ ways to select the $k/2$ coordinates in the order they will be changed. Thus $Z_{k/2}= d^{k/2} |S|+ o\brac{d^{k/2}}= s_{k/2} d^{k/2} +o\brac{d^{k/2}}$, and therefore  $$C_k(G_d)=s_{k/2} nd^{k/2} +o\brac{nd^{k/2}}.$$ \end{proof}

\begin{lemma}[from \cite{ste78}]
	\label{stein and everett} Let $s_1=1$ and $s_n=(n-1) \sum_{j=1}^{n-1} s_j s_{n-j}$.
	Let $s_1=1$ and $s_n=(n-1) \sum_{j=1}^{n-1} s_j s_{n-j}$. Then
	$$s_{n+1} > (2n+1) s_n \quad \text{ for } \quad n \geq 4$$
	$$s_{n+1} < (2n+2) s_n\quad \text{ for } \quad n \geq 1.$$
\end{lemma}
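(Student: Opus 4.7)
The plan is to prove both bounds simultaneously by strong induction, working with the ratio $r_n := s_{n+1}/s_n$. The claimed inequalities become $2n+1 < r_n < 2n+2$ for $n \geq 4$ (together with the easier upper bound $r_n < 2n+2$ for $1 \leq n \leq 3$, which follows from explicit computation of $s_1,\ldots,s_4$). First I would verify the base cases $n=1,2,3,4$ by direct arithmetic using $s_1=s_2=1$, $s_3=4$, $s_4=27$, $s_5=248$.

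Next I would rewrite the recurrence, dividing both sides by $s_n$ and separating the endpoint terms $j=1,n$ (which contribute $2$) and $j=2,n-1$ (which contribute $2 s_{n-1}/s_n = 2/r_{n-1}$):
\[
r_n \;=\; 2n \;+\; \frac{2n}{r_{n-1}} \;+\; E_n, \qquad E_n \;:=\; n\sum_{j=3}^{n-2} \frac{s_j\, s_{n+1-j}}{s_n}.
\]
The lower bound is then almost automatic: the inductive upper bound $r_{n-1} < 2n$ gives $2n/r_{n-1} > 1$, and since $E_n \geq 0$ we obtain $r_n > 2n + 1$. For the upper bound, the inductive lower bound $r_{n-1} > 2n-1$ gives $2n/r_{n-1} < 2n/(2n-1)$, so it suffices to show $E_n < (2n-2)/(2n-1)$, i.e., essentially $E_n < 1$.

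To bound $E_n$, I would use the telescoping identity $s_{n+1-j}/s_n = 1/(r_{n-1} r_{n-2} \cdots r_{n+1-j})$ so that
\[
\frac{n\, s_j\, s_{n+1-j}}{s_n} \;=\; \frac{n\, s_j}{r_{n-1}\,r_{n-2}\,\cdots\, r_{n+1-j}},
\]
and then invoke the inductive lower bounds $r_k > 2k+1$ (for the large-index ratios in the denominator) and the inductive upper bounds $r_k < 2k+2$ (for ratios hidden inside $s_j = s_{j-1} r_{j-1}$). Using symmetry $j \leftrightarrow n+1-j$, one restricts to $3 \leq j \leq \lceil (n+1)/2 \rceil$. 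For fixed small $j$, each term is bounded by $C_j / n^{j-2}$, which is summable; for $j$ near the middle, the product of large $r_k$'s in the denominator dominates the growth of $s_j$ in the numerator, because $s_j$ grows roughly like $\prod_{k<j}(2k+2)$ while the denominator contributes $\prod_{k} (2(n-k)+1)$, and for $j\leq n/2$ the denominator factors are all at least as large as those building up $s_j$.

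The main obstacle I anticipate is controlling the middle terms of $E_n$ (those with $j$ near $n/2$), because the naive one-term lower bound $s_n \geq (n-1) s_j s_{n-j}$ is far too weak there. The remedy is to exploit multiple terms of the recurrence at once: for example, $s_n \geq 2(n-1)(s_{n-1} + s_{n-2} + 4 s_{n-3} + \cdots)$, which shows that $s_j s_{n-j}/s_n$ is actually a small fraction of $1/(n-1)$ for middle-range $j$. Carefully carrying out this refined comparison and summing the geometric-like tail should yield $E_n = O(1/n)$, which is more than enough to close the induction.
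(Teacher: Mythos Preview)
The paper does not prove this lemma at all: it is quoted verbatim from Stein and Everett \cite{ste78} and used as a black box in the proof of Lemma~\ref{s prop}. So there is no ``paper's proof'' to compare your attempt against.

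As for your proposal itself: the decomposition $r_n = 2n + 2n/r_{n-1} + E_n$ is correct, and your derivation of the lower bound from the inductive upper bound on $r_{n-1}$ is clean and complete. The upper bound indeed reduces to showing $E_n < (2n-2)/(2n-1)$, and numerically $E_n$ stays well below this (one computes $E_5\approx 0.32$, $E_6\approx 0.46$, $E_7\approx 0.50$, $E_8\approx 0.49$, slowly drifting back toward $0$). However, your sketch for bounding $E_n$ is not yet a proof. The pairing argument ``denominator factors are all at least as large as those building up $s_j$'' is true in spirit for $j\le n/2$ once $n$ is large, but the resulting product bound $\prod_{k=1}^{j-1}(2k+2)/\prod_{k=n+1-j}^{n-1}(2k+1)$ is far too loose for the small-$j$ terms (for $j=3$ it overestimates by a factor of $6$ because $r_1=1$ and $r_2=4$ are much smaller than $4$ and $6$), and summing these loose bounds over $j$ can exceed $1$. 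You would need to treat the first few values of $j$ with exact constants $s_3=4$, $s_4=27$, etc., and then control the tail more carefully; the ``refined comparison'' you allude to for the middle terms is the right instinct but is not spelled out. In short: the architecture is sound, the lower bound is done, but the $E_n$ estimate needs a genuine argument rather than a heuristic.
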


\begin{lemma}\label{s prop} 
	Let $s_1=1$ and $s_n=(n-1) \sum_{j=1}^{n-1} s_j s_{n-j}$.
	Let $s_1=1$ and $s_n=(n-1) \sum_{j=1}^{n-1} s_j s_{n-j}$. Then there exists $s_0>0$ such that $(s_0, s_1, s_2, \dots s_k)$ satisfies the Stieltjes condition.
\end{lemma}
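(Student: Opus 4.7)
The plan is to invoke Lemma~\ref{char}: it suffices to verify its hypothesis, namely that $s_x s_y < s_a s_b$ for all quadruples $1 \leq a < x \leq y < b$ (the only case of this inequality actually used in the proof of Lemma~\ref{char} is $a+b=x+y$, so it is really that case we must check). I claim this reduces to the single assertion that the ratio
\[
\rho_n \;:=\; \frac{s_{n+1}}{s_n}
\]
is strictly increasing in $n\geq 1$. Indeed, given $a<x\leq y<b$ with $a+b=x+y$, setting $d:=x-a=b-y\geq 1$ gives
\[
\frac{s_a s_b}{s_x s_y} \;=\; \frac{s_b/s_y}{s_x/s_a} \;=\; \prod_{i=0}^{d-1}\frac{\rho_{y+i}}{\rho_{a+i}},
\]
and since $y+i > a+i$ for each $i$, strict monotonicity of $\rho$ makes every factor strictly larger than $1$.

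It therefore remains to show $\rho_n$ is strictly increasing for $n\geq 1$. I would split into small $n$ (handled by direct computation from the recurrence) and large $n$ (handled by Lemma~\ref{stein and everett}).

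\begin{itemize}
\item For the small cases, unroll the recurrence to get $s_1,s_2,s_3,s_4,s_5 = 1,1,4,27,248$, hence $\rho_1,\rho_2,\rho_3,\rho_4 = 1,\,4,\,27/4,\,248/27$, visibly strictly increasing.
\item For $n\geq 4$, Lemma~\ref{stein and everett} gives $\rho_n > 2n+1$ and $\rho_{n-1} < 2n$, so $\rho_n > 2n+1 > 2n > \rho_{n-1}$.
\end{itemize}

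Combining the two ranges yields strict monotonicity of $\rho_n$ for every $n\geq 1$, which (as explained above) supplies the hypothesis of Lemma~\ref{char} for the vector $(s_1,\dots,s_k)$ for any $k$. Lemma~\ref{char} then produces the desired $s_0>0$ making $(s_0,s_1,\dots,s_k)$ satisfy the Stieltjes condition.

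The main (and essentially only) obstacle is the very beginning of the sequence: the asymptotic comparison $\rho_n > 2n+1 > 2n > \rho_{n-1}$ from Lemma~\ref{stein and everett} only applies for $n\geq 4$, and the initial ratios $\rho_1=\rho_2/\rho_1$ are small ($\rho_1=1$), so one has to check by hand that no early inversion of monotonicity occurs. Once these four base values are computed, the rest of the argument is purely mechanical.
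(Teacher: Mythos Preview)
Your proof is correct and follows essentially the same route as the paper: invoke Lemma~\ref{char}, and verify its hypothesis using the ratio bounds of Lemma~\ref{stein and everett} together with a few hand-checked initial values. Your packaging via strict monotonicity of $\rho_n=s_{n+1}/s_n$ is a bit cleaner than the paper's direct comparison of $\frac{(2x)!!}{(2a)!!}$ with $\frac{(2b-1)!!}{(2y-1)!!}$, but the content is the same.

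One point worth highlighting: your parenthetical observation that Lemma~\ref{char} only ever uses the case $a+b=x+y$ is not just a convenience---it is essential. The full hypothesis ``$s_xs_y<s_as_b$ for all $1\le a<x\le y<b$'' actually \emph{fails} for this sequence (take $a=1$, $x=y$ large, $b=y+1$: then $s_y^2$ dwarfs $s_1s_{y+1}\approx 2y\,s_y$). The paper's own proof glosses over this: its claimed inequality $\frac{(2x)!!}{(2a)!!}<\frac{(2b-1)!!}{(2y-1)!!}$ is only valid when $x-a\le b-y$, i.e.\ when $a+b\ge x+y$. So your explicit reduction to the equal-sum case is the right way to state the argument, and the proof of Lemma~\ref{key} confirms that this is all that is required.
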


\begin{proof}
	 We apply \Cref{char} which says a vector with  $s_{x} s_{y}< s_{a} s_{b}$ for all $1\leq a< x \leq  y< b$ can be can be extended to satisfy the Stieltjes condition. We show this conditions holds for the infinite vector.
	 
	 First we consider the case when $y \geq 4$. By \Cref{stein and everett} $$s_b > \frac{ (2b-1)!!}{(2y-1)!!} s_y \quad \text{ and } \quad s_x< \frac{(2x)!!}{(2a)!!} s_a.$$
	 Note $\frac{(2x)!!}{(2a)!!}<\frac{ (2b-1)!!}{(2y-1)!!}$, and therefore
	 $$s_xs_y < \frac{(2x)!!}{(2a)!!}s_a s_y < \frac{ (2b-1)!!}{(2y-1)!!}s_a s_y < s_a s_b.$$
	 
	 We consider the remaining three cases separately. For $a=1, x=2, y=3$, $b\geq 4$ so  $s_b \geq 27$. Therefore $s_xs_y=4< 27\leq  s_a s_b$. For $a=1, x=3, y=3$, $b\geq 4$ so  $s_b \geq 27$. Therefore $s_x s_y=16< 27\leq  s_a s_b$. For $a=1, x=2, y=2$, $b \geq 3$ so $s_b \geq 4$. Therefore $s_x s_y=1 < 4 \leq s_a s_b$. 
	 
	\end{proof}

\medskip

\begin{proof}(of \Cref{h1})
Follows directly from \Cref{cube cycles}, \Cref{s prop}, and \Cref{half} of \Cref{ach}.
\end{proof}

\begin{remark} 
The limit of the sequence of hypercubes is not fully achievable. 
\end{remark}

\begin{proof}
For a ROC family $( \dist, 1/2)$ with $m= \max_i m_i$, the  $w_k$ coordinate in the limit vector is at most $x(2m)^k$. However the $w_k$ coordinate in the hypercube sequence is $ (k-1)!!  = \Theta \brac{ \bfrac{k}{e}^{k/2}}$.  Therefore there is no $\dist$ which achieves the full hypercube limit vector. 
\end{proof}

In \Cref{gen hyper}, we show that two generalizations of the hypercube have the same limit and therefore are also totally $k$-achievable. We now prove 
\Cref{r1} which states that the limit of the sequence of rook's graphs is fully achievable.

\begin{proof}(of \Cref{r1}.)
Recall from \Cref{rook} that the sequence of $(G_k)$ has sparsity exponent 1 and converges to the vector with $w_i=2^{2-i}$. 
By \Cref{polynomials}, the ROC family with $a=1$ and $\dist$ the distribution that selects $m=1/2$ and $q=1$ with probability $1$ achieves this limit. 
\end{proof}

\section{Discussion}\label{discussion}

\subsection{Limitations of the ROC model}\label{limitations}

We have shown that the limiting closed walk counts of many graph sequences can be approximated by the ROC model; the model is succinct and can be easily sampled to produce graphs with the same normalized walk counts as the sequence up to terms that disappear as the size of the sampled graph grows. \Cref{ach,full-ach} give necessary and sufficient conditions for when a limit vector or sequence can be achieved. The natural next question is whether all graphs sequences converge to a limit that can be achieved by a ROC family. The answer to this question is no. There are both sequences of graphs that are not convergent in any of the senses we have defined, and convergent sequences of graphs with limits that are not achievable by a ROC family. We discuss such examples in this section.

\paragraph{Non-convergent graph sequences.} \label{non convergent}
Not all sequences of graphs converge or have a convergent subsequence. Consider the following sequence of ROC graphs drawn from different ROC families.

\begin{example}
	Let $a \in [1/2,1]$ and let $\mu_i$ be the distribution on one point $m_i=i$ and $q_i=1$. Let $(G_i)$ be a sequence of graphs with $G_i \sim ROC(n_i,d_i,\mathcal{D})$ such that $d_i$ satisfies the degree conditions given in \Cref{achievability defn}. The sequence $(G_i)$ is not $k$-convergent for any $k$ and is not fully convergent.
\end{example}

\noindent By \Cref{polynomials}, $\E {W_3(G_i)}= m_i q_i^2 n_i d_i^{1+a(k-2)} +o\brac{n_i d_i^{1+a(k-2)}}$. Therefore $W_3(G_i,a)=i+\ve(i)$ and for all $\alpha>a$ $W_3(G_i,\alpha)=0+\ve(i)$ where $\ve(i)$ is an error term that vanishes with high probability as $i$ tends to infinity. It follows that almost surely the sparsity exponent and $k$-sparsity exponent are $a$ and the sequence  $W_3(G,a)$ does not converge. Thus, the sequence is almost surely not $k$-convergent or fully convergent.

\paragraph{Limit sequences that are not achievable by any ROC model.} \label{not achievable}

We give a sequence of graphs with increasing degree that converges to a limit that is not achievable by any ROC family and provide a method for producing such sequences. First we give a necessary condition on $(w_3, w_4, w_5, w_6)$ for it to appear as the prefix of a limit vector achievable by a ROC family.

\begin{lemma} \label{3456} 
If $(w_3, w_4, w_5, w_6)$ is a prefix of a $k$-limit that can be achieved by ROC family with sparsity exponent $>1/2$, then 
$$ w_3^2 w_6 \geq w_5^3.$$
\end{lemma}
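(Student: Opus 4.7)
The plan is to invoke \Cref{more than half} to re-express the inequality in terms of moments of a positive measure, and then deduce the result from H\"older's inequality combined with the normalization $s_2 \leq 1$.

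First, by \Cref{more than half}, any $k$-limit of the stated form admits a decomposition $w_3 = \gamma s_3$, $w_5 = \gamma s_5$, $w_6 = \gamma s_6 + (1-\gamma) t_6$, for some $\gamma \in [0,1]$ and vectors $(s_0, \ldots, s_6)$, $(t_0, t_2, t_4, t_6)$ satisfying the Stieltjes condition. By \Cref{stieltjes}, $(s_j)$ is the truncated moment sequence of a finite discrete positive measure $\nu$ on $\R^+$; write $s_j = \int x^j \, d\nu$. The normalization $s_2 \leq 1$ is inherited from the ROC constraint $\sum_i x_i m_i^2 q_i = 1$ (\Cref{plain}). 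Since $(1-\gamma) t_6 \geq 0$, we have $w_6 \geq \gamma s_6$, so after cancelling the common factor $\gamma^3$ the asserted inequality reduces to the pure moment inequality $s_3^2 s_6 \geq s_5^3$ for such a measure $\nu$.

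Second, to prove this moment inequality I would apply H\"older (or iterated Cauchy--Schwarz) to the moments of $\nu$. Writing $s_5 = \int x^5 \, d\nu$ and decomposing $x^5 = x^a \cdot x^b$ with conjugate exponents $1/p + 1/q = 1$, H\"older gives bounds of the form $s_5 \leq s_{ap}^{1/p}\, s_{bq}^{1/q}$. The standard choice that produces a weighted geometric mean of $s_3$ and $s_6$ at the exponent $5$ has weights $1/3$ and $2/3$ (since $5 = \tfrac{1}{3}\cdot 3 + \tfrac{2}{3}\cdot 6$), giving $s_5^3 \leq s_3 s_6^2$ (log-convexity of moments). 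To upgrade this to the asserted $s_5^3 \leq s_3^2 s_6$, one invokes the constraint $s_2 \leq 1$: combined with the Hankel-positivity chain $s_k^2 \leq s_{k-1} s_{k+1}$ supplied by the Stieltjes condition, this extra bound constrains the effective support of $\nu$ and (the hope is) shifts the weight balance in the Hölder estimate to land on $s_3^2 s_6$.

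The hard part is precisely this last step: the naive H\"older application gives $s_5^3 \leq s_3 s_6^2$, which is the `transposed' direction from the asserted $s_5^3 \leq s_3^2 s_6$. Thus the proof must genuinely leverage $s_2 \leq 1$ (a feature of the ROC parameterization rather than of arbitrary positive measures) together with the full Hankel-matrix positivity to deduce the sharper form. Concretely, I expect to set up the Hankel matrix of $(s_0,\ldots,s_6)$ and use a determinantal or Schur-complement argument, combined with $s_2 \leq 1$, to derive the required direction; this is where I expect the main technical work to lie.
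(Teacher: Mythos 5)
Your setup is exactly the paper's: invoke \Cref{more than half} to write $w_3=\gamma s_3$, $w_5=\gamma s_5$, $w_6=\gamma s_6+(1-\gamma)t_6\ge\gamma s_6$, then use positivity of the Hankel matrices of $(s_0,\dots,s_k)$. The paper's proof extracts the two $2\times 2$ principal minors $s_4^2\le s_3s_5$ and $s_5^2\le s_4s_6$ and chains them to conclude $\gamma s_6\ge w_5^{3/2}/\sqrt{w_3}$; this is the same log-convexity computation as your H\"older step, and it yields $w_3\,w_6^2\ge w_5^3$. Up to that point your argument and the paper's coincide in substance.

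The ``hard part'' you flag is not a gap in your argument but a transposition of exponents in the lemma statement: what the paper's own proof establishes (and what is true) is $w_3\,w_6^2\ge w_5^3$, not the stated $w_3^2\,w_6\ge w_5^3$. The upgrade you were hoping to extract from $s_2\le 1$ does not exist, because the stated form is false. Concretely, a ROC family with a single community type $(m,q)$ and $a>1/2$ achieves $w_j=m^{j-2}q^{j-1}$, for which $w_5^3/(w_3^2w_6)=(mq)^3$; any community with $mq>1$ (e.g.\ $q=1$, $m=10$) violates the stated inequality while satisfying $w_3w_6^2=w_5^3$ with equality. (Equivalently, in your measure-theoretic language: a single atom at $x=mq>1$ with small mass $c$ has $s_2=cx^2\le 1$ yet $s_3^2s_6=c^3x^{12}<c^3x^{15}=s_5^3$.) The only downstream use of the lemma, \Cref{break} with $w_3=0$ and $w_5=1/8$, is unaffected since both forms fail when $w_3=0<w_5$. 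So: stop at $w_3w_6^2\ge w_5^3$ and you have the paper's proof of the corrected statement; do not spend further effort on the last step.
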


\begin{proof} 
By \Cref{more than half} of \Cref{ach}, there exists $\gamma \in [0,1]$, $s_0, s_1$, and $s_2 \leq 1$ such that $(s_0, s_1, \dots s_k)$ satisfies the truncated Stieltjes condition, $w_j=s_j \gamma$ for $j$ odd, and $w_j\geq s_j \gamma$ for $j$ even. By \Cref{stieltjes}, this implies that $H_{6}^{(0)}$ and $H_{5}^{(1)}$ are positive semi-definite. 
Since the all principle minors of positive semi-definite matrices have non-negative determinants, it follows that the principal minors
$$
\begin{pmatrix} 
s_4 & s_5\\
 s_5 & s_6
\end{pmatrix}
=\begin{pmatrix} 
s_4 & \frac{w_5}{\gamma}\\
\frac{w_5}{\gamma} & s_6
\end{pmatrix}
\quad
\text{ and }
\quad
\begin{pmatrix} 
s_3 & s_4\\
 s_4 & s_5
\end{pmatrix}
=\begin{pmatrix} 
\frac{w_3}{\gamma}& s_4\\
s_4 & \frac{w_5}{\gamma}
\end{pmatrix}
$$
of $H_{6}^{(0)}$ and $H_{5}^{(1)}$ respectively 
have non-negative determinant. Therefore  $$\gamma^2 s_4 s_6 \geq w_5^2 \quad \text{ and } \quad \gamma^2 s_4^2 \leq w_3 w_5,$$
and so $\gamma s_6 \geq \frac{w_5^{3/2}}{\sqrt{w_3}}$. Since $w_6 \geq \gamma s_6$, the statement follows.
\end{proof}

Next we show how to construct a sequence of graphs with increasing degree that fails this condition. This construction is due to Shyamal Patel.

\begin{lemma} \label{shyamal}
Let $G_0$ be a graph. We construct a sequence $G_i$ as follows. Let $G_i$ be the graph with adjacency matrix $\begin{pmatrix} A_{i-1} & A_{i-1} \\ A_{i-1} & A_{i-1}\end{pmatrix}$ where $A_{i-1}$ is the adjacency matrix of $G_{i-1}$. Let $w_j= W_j(G_0, 1)$. Then for each $i$,
$$W_j(G_i,1)=w_j,$$ and so $(G_i)$ converges to $(w_3, w_4, w_5, \dots )$ with sparsity exponent 1.
\end{lemma}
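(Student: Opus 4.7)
The key structural observation is that the recursive construction is a Kronecker product. Writing $J_2 = \begin{pmatrix}1&1\\1&1\end{pmatrix}$, the definition of $A_i$ is precisely $A_i = J_2 \otimes A_{i-1}$. A straightforward induction, using associativity of $\otimes$ and the identity $J_2 \otimes J_{2^{i-1}} = J_{2^i}$ (where $J_m$ denotes the $m \times m$ all-ones matrix), yields
\[
A_i \;=\; J_{2^i} \otimes A_0.
\]
From here the computation of walk counts is nearly mechanical, which is why I expect no serious obstacle.

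The plan is to use the multiplicativity $(B\otimes C)^j = B^j \otimes C^j$ together with $J_m^{\,j} = m^{j-1} J_m$ and $\mathrm{tr}(B\otimes C) = \mathrm{tr}(B)\,\mathrm{tr}(C)$. First I would compute
\[
A_i^{\,j} \;=\; J_{2^i}^{\,j} \otimes A_0^{\,j} \;=\; (2^i)^{j-1}\, J_{2^i} \otimes A_0^{\,j},
\]
so that
\[
W_j(G_i) \;=\; \mathrm{tr}(A_i^{\,j}) \;=\; (2^i)^{j-1}\, \mathrm{tr}(J_{2^i})\, \mathrm{tr}(A_0^{\,j}) \;=\; 2^{ij}\, W_j(G_0).
\]
Next I would record the basic parameters of $G_i$: since $A_i$ is $2n_{i-1}\times 2n_{i-1}$, we have $n_i = 2^i n_0$, and since each row of $A_i$ has sum equal to twice the corresponding row sum of $A_{i-1}$, the average degree satisfies $d_i = 2 d_{i-1} = 2^i d_0$.

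Finally, plugging into the normalization with $\alpha = 1$ gives
\[
W_j(G_i,1) \;=\; \frac{W_j(G_i)}{n_i\, d_i^{\,j-1}} \;=\; \frac{2^{ij}\, W_j(G_0)}{2^i n_0 \cdot (2^i d_0)^{j-1}} \;=\; \frac{W_j(G_0)}{n_0 d_0^{\,j-1}} \;=\; w_j,
\]
independent of $i$, establishing the first claim.

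For the second claim (sparsity exponent $1$ and convergence to $(w_3,w_4,\dots)$), the limit at $b=1$ exists because the sequence $W_j(G_i,1)$ is constant. To see that $b=1$ is the infimum of the set of exponents for which the limits exist, I would show that for any $b<1$ and $j\ge 3$,
\[
W_j(G_i,b) \;=\; \frac{2^{ij}\, W_j(G_0)}{2^i n_0\, (2^i d_0)^{1+b(j-2)}} \;=\; 2^{i(1-b)(j-2)}\cdot \frac{W_j(G_0)}{n_0\, d_0^{\,1+b(j-2)}},
\]
which diverges as $i\to\infty$ whenever $W_j(G_0)>0$ (and the construction is only interesting in that case). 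This pins the sparsity exponent at $1$ and confirms that $(G_i)$ converges to $(w_3,w_4,\dots)$.
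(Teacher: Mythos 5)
Your proof is correct, and you take a genuinely different route from the paper's. The paper argues spectrally: it observes that if $v$ is an eigenvector of $A_{i-1}$ with eigenvalue $\lambda$, then $(v,v)^T$ is an eigenvector of $A_i$ with eigenvalue $2\lambda$, and then invokes a rank argument (the rank of $A_i$ equals that of $A_{i-1}$) to conclude that \emph{all} non-zero eigenvalues are exactly doubled, from which $W_j(G_i) = 2^{ij} W_j(G_0)$ follows by summing $j$-th powers. You instead identify the block structure as the Kronecker product $A_i = J_2 \otimes A_{i-1} = J_{2^i} \otimes A_0$ and compute the trace of $A_i^j$ directly via the multiplicativity identities $(B\otimes C)^j = B^j \otimes C^j$, $J_m^j = m^{j-1}J_m$, and $\mathrm{tr}(B\otimes C)=\mathrm{tr}(B)\mathrm{tr}(C)$. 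Your argument bypasses the eigenvalue bookkeeping and the rank argument entirely — it is somewhat more self-contained and arguably cleaner for this purpose — while the paper's spectral version has the small side benefit of pinning down the full spectrum of $A_i$, not just its moments. You also flesh out the sparsity-exponent claim (showing $W_j(G_i,b)$ diverges for $b<1$ when $W_j(G_0)>0$), which the paper leaves implicit; your parenthetical that $W_j(G_0)>0$ in the non-degenerate case is fine, since any $G_0$ with at least one edge has $W_j(G_0)>0$ for every even $j\ge 4$.
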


\begin{proof} Let $G_0$ be a graph on $n$ vertices with average degree $d$. Let $A_0$ be the adjacency matrix of $G_0$ and let $\lambda_1 \geq \lambda_2 \geq \dots \lambda_\ell$ be the non-zero eigenvalues of $A_0$. Note that if $\lambda$ is an eigenvalue of $A_{i-1}$ with eigenvector $v$, then $2 \lambda$ is an eigenvalue of $A_{i}$ with eigenvector $\begin{bmatrix} v \\ v\end{bmatrix}$. Since the adjacency matrix  $A_i$ has the same rank as the adjacency matrix of $A_{i-1}$, the set of non-zero eigenvalues of $A_i$ is precisely the set of non-zero eigenvalues of $A_{i-1}$ in which each is doubled. Therefore $A_i$  has non-zero eigenvalues $2^i \lambda_1, 2^i \lambda_2, \dots 2^i \lambda_\ell$.  Note $G_i$ has $G_0 2^i$ vertices and average degree $2^i d$. Therefore
$$W_j(G_i, 1)= \frac{\sum_{b=1}^\ell (2^i \lambda_b)^j}{2^i n (2^i d)^{j-1}}= \frac{\sum_{b=1}^\ell ( \lambda_b)^j} {n d^{j-1}}=w_j.$$
\end{proof}

The lemma implies that if there is a graph with $W_j(G,1)=w_j$, then there is a sequence of graphs $(G_i)$ with increasing degree that converges to this limit with sparsity exponent 1. 
Taking $G_0$ to be a girth four graph yields a sequence $(G_i)$ with a limit vector that violates the condition of \Cref{3456}. This sequence is dense since $d_i=\Theta( n_i)$.
However, we can construct a sparser sequence $(G_i')$ from $(G_i)$ with the same limit by taking each $G_i'$ to be the union of disjoint copies of $G_i$.

\begin{lemma} \label{disjoint copies}
	Let $(G_i)$ be a convergent sequence of graphs with sparsity exponent $\alpha$. Let $(t_i)$ be a sequence of positive integers, and let $(G_i')$ be a graph sequence in which $G_i'$ consists of $t_i$ disjoint copies of $G_i$. Then $(G_i')$ achieves the same limit as $G_i$ with the same sparsity exponent.
	\end{lemma}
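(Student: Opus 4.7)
The plan is very short because the normalization absorbs the multiplicity $t_i$ exactly. First I would record the two structural observations about $G_i'$: since it is a disjoint union of $t_i$ copies of $G_i$, it has $n(G_i')=t_i n_i$ vertices, and each vertex of $G_i'$ has the same degree as the corresponding vertex in $G_i$, so the average degree satisfies $d(G_i')=d_i$.

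Next I would note that any closed $k$-walk in $G_i'$ must be contained entirely in a single copy of $G_i$, because the copies are disjoint. Therefore $W_k(G_i')=t_i \, W_k(G_i)$ for every $k\ge 1$. Substituting into the definition of the normalized walk count,
\[
W_k(G_i',\beta)=\frac{W_k(G_i')}{n(G_i')\,d(G_i')^{1+\beta(k-2)}}=\frac{t_i W_k(G_i)}{t_i n_i\, d_i^{1+\beta(k-2)}}=W_k(G_i,\beta)
\]
for every $\beta\in[1/2,1]$. Hence $\lim_{i\to\infty}W_k(G_i',\beta)$ exists if and only if $\lim_{i\to\infty}W_k(G_i,\beta)$ exists, and the two limits are equal.

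From this equality it follows that the set of exponents $b\in[1/2,1]$ for which all (respectively all $j\le k$) normalized limits exist is the same for both sequences, so the sparsity exponent and the $k$-sparsity exponent of $(G_i')$ agree with those of $(G_i)$. Applying the equality at $\beta=\alpha$ then gives $\lim_{i\to\infty}W_k(G_i',\alpha)=w_k$ for all $k$, which is exactly the statement that $(G_i')$ is (fully) convergent with the same limit as $(G_i)$. I do not see any real obstacle here: the only thing to be a bit careful about is confirming that the average degree is preserved (not merely the total edge count divided by something), and that closed walks cannot bridge different components, both of which are immediate.
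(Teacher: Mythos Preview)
Your proof is correct and follows essentially the same approach as the paper: observe that $n(G_i')=t_in_i$, $d(G_i')=d_i$, and $W_k(G_i')=t_iW_k(G_i)$, then cancel $t_i$ in the normalized count. Your version is in fact slightly more thorough than the paper's, since you explicitly verify that the equality $W_k(G_i',\beta)=W_k(G_i,\beta)$ holds for every $\beta$ and hence the sparsity exponent is preserved, whereas the paper just asserts this.
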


\begin{proof} Note that $G_i$ and $G_i'$ both have average degree $d_i$ and the number of vertices in $G_i'$ is $n_i'= t_i n_i$. Note also that $W_k(G_i')=t_i W_k(G_i)$. It follows that $$W_k(G_i', \alpha) =\frac{ W_j(G_i')}{n_i'd_i'^{1+a(j-2)}}=\frac{ W_j(G_i)}{n_id_i^{1+a(j-2)}}.$$
\end{proof}

We use \Cref{shyamal,3456} to construct a family of sequences with arbitrary sparsity that are not achievable by the ROC model. This example implies that there is no class of densities for which the ROC model can capture all $6$-limits of sequences with the specified density.  

\begin{example} \label{break} Let $G_0$ be the five cycle. Then the sequence $(G_i)$ defined as in \Cref{shyamal} converges to a limit that cannot be achieved by any ROC family. This limit $(0, 3/4, 1/8, 5/8)$ is at constant distance from any achievable limit. Moreover, there exists a sequence $(G_i')$ with the same limit and $d_i'=f(n_i')$ for any function $f(n)=o(n)$. To see this, apply \Cref{disjoint copies} to the sequence $(G_i)$ with $t_i= f^{-1}(n_i)/n_i$.  
\end{example}

\subsection{Extensions of ROC and relation to sparse graph limit theory}

We have seen that the ROC model provides a sampleable approximation of the limits of many sparse graph sequences, in particular the hypercube sequence. Our metric was defined in terms of a vector of closed walk counts of each length appropriately normalized. This vector is a natural choice because closed walk counts are equivalent to the moments of the eigenspectrum, and the normalization factor encodes average density of local neighborhoods.  We end by discussing how the ROC model fits into sparse graph limit theory and mentioning future directions that illustrate the potential of the ROC model.

\paragraph{Distance and convergence.} Our notion of convergence based on normalized closed walk count vectors  differs from other notions of graph convergence in two key ways. First, our theory does not provide an inherent metric for describing the distance between two graphs. The normalization factor used to determine the convergence of a sequence of graphs depends on the rate of growth of the closed walk counts in the sequence. Therefore, it not clear which normalization factor $\alpha$ to use when comparing the closed walk vectors of just two graphs. Second, due to this flexibility in normalization parameter $\alpha$, the space of all vectors of normalized closed walk counts is unbounded, and so it is possible to construct sequences of graphs with no convergent subsequence (as in \Cref{non convergent}). In contrast the set of local profiles and the set of graphons are compact, so every sequence of graphs in these settings has a convergent subsequence. 

\paragraph{Capturing cuts.}
While a graph $H$ drawn from the ROC model may capture the closed walk counts of a graph $G$, there is no guarantee that $H$ and $G$  will have similar cuts. (The in the local profile approach for bounded degree graphs also succeeds at encoding local properties and fails to capture the global property of cuts.) For example, consider a convergent sequence of connected graphs $(G_i)$ and a sequence of graphs $(G_i')$ where $G_i'$ is a collection of disjoint copies of $G_i$. \Cref{disjoint copies} implies $(G_i)$ and $(G_i')$ have the same limit; however the cuts in these sequences greatly differ. Moreover the cuts of a ROC graph drawn from the family that achieves the limit need not have cuts that match either $G_i$ or $G_i'$.

In general, even if the moments of the eigenspectra of two graphs match, their spectral gaps and precise set of eigenvalues may greatly differ.  In Appendix C we discuss a different approximation of the spectrum of the hypercube graph. It is not of constant size (the size of the approximation grows with $d$ for a hypercube of size $2^d$), but it captures the $d$ distinct eigenvalues of the hypercube precisely (and therefore the minimum cut). On the other hand, the approximation does not preserve information about the multiplicities of the eigenvalues, and hence does not capture the walk counts.

\paragraph{An extension of the ROC model.}
We imagine the following extension to the ROC model that has the potential to encode information about the cuts of a graphs and give a finer grained approximation of local structure while also maintaining the approximation of closed walk counts. Begin with a partition of the vertex set, and for each community type specify a distribution over partition classes. Then, when adding a community to a ROC graph,
select vertices for the community based on the corresponding distribution over partition classes. This modification has the potential to better approximate cuts because it is possible to control the number of edges between partition classes.

Moreover, the above modification will likely be a better approximation for graphs that are not close to locally regular. Currently a ROC approximation produces a graph in which each vertex is in approximately the number of closed walks as an average vertex in the target graph. 
An expanded theory, perhaps including the above modification, could create graph approximations that capture the distribution of local closed walk counts vectors at each vertex. 

\paragraph{Achievability of all limits.}
As demonstrated in \Cref{not achievable}, not all limit sequences can be achieved by a ROC model. In particular, the model may not be able to capture the limits of sequences of girth five graphs because the density of the communities need to produce many five cycles will also produce many three and four cycles. This problem could be resolved by generalizing the model so that communities may have structure other than E-R random graphs. Alternately, the aforementioned approach of adding ROCs between partition classes might provide sufficient flexibility to achieve a wider range of limits.

\subsection{Additional open questions.}

\begin{enumerate}

	\item A further generalization involves adding particular sub\networks from a specified set according to some distribution instead of E-R graphs in each step (e.g., perfect matchings or Hamiltonian paths). Such constructions could provide greater flexibility in approximating graphs or could be useful for answering extremal questions.

	\item A fundamental question in the study of graphs is how to identify relatively dense clusters. For example, clustering protein-protein interaction networks is a useful technique for identifying possible cellular functions of proteins whose functions were otherwise unknown \cite{Ste05, Kro06}. An algorithm designed specifically to identify the communities in a \network drawn from the ROC model has potential to become a state-of-the-art algorithm for clustering with overlap.
	
	\item A ROC graph $H$ that approximates a target graph $G$ has similar closed walk counts as $G$. To what extent does this similarly imply that algorithms will behave similarly on $G$ and $H$? For instance, can we analyze the behavior of random walks or percolation of ROC graphs? How does this compare to the behavior of the same process on other graphs with the same closed walk counts?

	\item	The asymptotic thresholds for properties of ROC graphs have yet to be studied. (See \cite{Fri15} for a survey on E-R random graphs.) Which phase transitions appearing in E-R  random graphs also appear in ROC graphs? Does every nontrivial monotone property have a threshold? These questions have been answered in special cases in \cite{anastos2019thresholds}.
	 
\end{enumerate}

\section*{Acknowledgements}
The authors would like to thank Shyamal Patel for his construction of the class of graphs with non-achieveable limit given in \Cref{shyamal} and Winston Li for help with real-world networks.
Lutz Warnke for helpful conversations and pointers to relevant work, and Victor Veitch for his correspondence which clarified our understanding of graphexes. Finally, we thank L\'aszl\'o Lov\'asz for inspiring us to think about the sequence of hypercube graphs. 

\bibliographystyle{plain}
\bibliography{graphapprox}

\begin{thebibliography}{10}

\bibitem{Ahn10}
Yong-Yeol Ahn, James~P Bagrow, and Sune Lehmann.
\newblock Link communities reveal multiscale complexity in networks.
\newblock {\em Nature}, 466(7307):761--764, 2010.

\bibitem{Yon07}
Yong-Yeol Ahn, Seungyeop Han, Haewoon Kwak, Sue Moon, and Hawoong Jeong.
\newblock Analysis of topological characteristics of huge online social
  networking services, 2007.

\bibitem{Air08}
Edoardo~M Airoldi, David~M Blei, Stephen~E Fienberg, and Eric~P Xing.
\newblock Mixed membership stochastic blockmodels.
\newblock {\em Journal of Machine Learning Research}, 9(Sep):1981--2014, 2008.

\bibitem{Air06}
Edoardo~M Airoldi, David~M Blei, Stephen~E Fienberg, Eric~P Xing, and Tommi
  Jaakkola.
\newblock Mixed membership stochastic block models for relational data with
  application to protein-protein interactions.
\newblock In {\em Proceedings of the international biometrics society annual
  meeting}, pages 1--34, 2006.

\bibitem{Alo07}
Uri Alon.
\newblock Network motifs: theory and experimental approaches.
\newblock {\em Nature Reviews Genetics}, 8(6):450--461, 2007.

\bibitem{anastos2019thresholds}
Michael Anastos, Peleg Michaeli, and Samantha Petti.
\newblock Thresholds in random motif graphs.
\newblock In {\em Approximation, Randomization, and Combinatorial Optimization.
  Algorithms and Techniques (APPROX/RANDOM 2019)}. Schloss
  Dagstuhl-Leibniz-Zentrum fuer Informatik, 2019.

\bibitem{backhausz2018action}
Agnes Backhausz and Balazs Szegedy.
\newblock Action convergence of operators and graphs.
\newblock {\em arXiv preprint arXiv:1811.00626}, 2018.

\bibitem{Bad03}
Gary~D Bader and Christopher~WV Hogue.
\newblock An automated method for finding molecular complexes in large protein
  interaction networks.
\newblock {\em BMC bioinformatics}, 4(1):2, 2003.

\bibitem{beineke2004topics}
Lowell~W Beineke, Robin~J Wilson, Peter~J Cameron, et~al.
\newblock {\em Topics in algebraic graph theory}, volume 102.
\newblock Cambridge University Press, 2004.

\bibitem{ben01}
Itai Benjamini and Oded Schramm.
\newblock Recurrence of distributional limits of finite planar graphs.
\newblock {\em Electron. J. Probab.}, 6:13 pp., 2001.

\bibitem{bor17}
Christian Borgs, Jennifer~T Chayes, Henry Cohn, and Victor Veitch.
\newblock Sampling perspectives on sparse exchangeable graphs.
\newblock {\em arXiv preprint arXiv:1708.03237}, 2017.

\bibitem{bor14}
Christian Borgs, Jennifer~T Chayes, Henry Cohn, and Yufei Zhao.
\newblock An $ l^{p} $ theory of sparse graph convergence i: limits, sparse
  random graph models, and power law distributions.
\newblock {\em arXiv preprint arXiv:1401.2906}, 2014.

\bibitem{bro12}
Andries~E Brouwer and Willem~H Haemers.
\newblock Strongly regular graphs.
\newblock In {\em Spectra of Graphs}, pages 115--149. Springer, 2012.

\bibitem{Chu02}
Fan Chung and Linyuan Lu.
\newblock Connected components in random graphs with given expected degree
  sequences.
\newblock {\em Annals of Combinatorics}, 6(2):125--145, 2002.

\bibitem{Cir08}
Giovanni Ciriello and Concettina Guerra.
\newblock A review on models and algorithms for motif discovery in
  protein--protein interaction networks.
\newblock {\em Briefings in Functional Genomics and Proteomics}, 7(2):147--156,
  2008.

\bibitem{Erd59}
Paul Erd\H{o}s and Alfr\'ed R\'enyi.
\newblock On random graphs i.
\newblock {\em Publ. Math. Debrecen}, 6:290--297, 1959.

\bibitem{Erd60}
Paul Erd\H{o}s and Alfr\'ed R\'enyi.
\newblock On the evolution of random graphs.
\newblock {\em Publ. Math. Inst. Hung. Acad. Sci}, 5(1):17--60, 1960.

\bibitem{fre16}
P{\'e}ter~E Frenkel.
\newblock Convergence of graphs with intermediate density.
\newblock {\em arXiv preprint arXiv:1602.05937}, 2016.

\bibitem{Fri96}
Alan Frieze and Ravi Kannan.
\newblock The regularity lemma and approximation schemes for dense problems.
\newblock In {\em Foundations of Computer Science, 1996. Proceedings., 37th
  Annual Symposium on}, pages 12--20. IEEE, 1996.

\bibitem{Fri15}
Alan Frieze and Micha\l Karo\'nski.
\newblock {\em Introduction to Random Graphs}.
\newblock Cambridge University Press, 2015.

\bibitem{Hol02}
Petter Holme and Beom~Jun Kim.
\newblock Growing scale-free networks with tunable clustering.
\newblock {\em Physical review E}, 65(2), 2002.

\bibitem{Kar11}
Brian Karrer and Mark~EJ Newman.
\newblock Stochastic blockmodels and community structure in networks.
\newblock {\em Physical Review E}, 83(1):016107, 2011.

\bibitem{Kro06}
Nevan~J Krogan, Gerard Cagney, Haiyuan Yu, Gouqing Zhong, Xinghua Guo, Alexandr
  Ignatchenko, Joyce Li, Shuye Pu, Nira Datta, Aaron~P Tikuisis, et~al.
\newblock Global landscape of protein complexes in the yeast saccharomyces
  cerevisiae.
\newblock {\em Nature}, 440(7084):637, 2006.

\bibitem{Lov14}
L\'aszl\'o Lov\'asz and Santosh Vempala.
\newblock Personal Communication, 2014.

\bibitem{Mah06}
Priya Mahadevan, Dmitri Krioukov, Marina Fomenkov, Xenofontas Dimitropoulos,
  Amin Vahdat, et~al.
\newblock The internet as-level topology: three data sources and one definitive
  metric.
\newblock {\em ACM SIGCOMM Computer Communication Review}, 36(1):17--26, 2006.

\bibitem{Mil02}
Ron Milo, Shai Shen-Orr, Shalev Itzkovitz, Nadav Kashtan, Dmitri Chklovskii,
  and Uri Alon.
\newblock Network motifs: simple building blocks of complex networks.
\newblock {\em Science}, 298(5594):824--827, 2002.

\bibitem{Mis07}
Alan Mislove, Massimiliano Marcon, Krishna~P. Gummadi, Peter Druschel, and
  Bobby Bhattacharjee.
\newblock Measurement and analysis of online social networks, 2007.

\bibitem{New03}
Mark~EJ Newman.
\newblock The structure and function of complex networks.
\newblock {\em SIAM review}, 45(2):167--256, 2003.

\bibitem{New05}
Mark~EJ Newman.
\newblock {\em Random graphs as models of networks}, pages 35--68.
\newblock Wiley-VCH Verlag GmbH Co. KGaA, 2005.

\bibitem{New09}
Mark~EJ Newman.
\newblock Random graphs with clustering.
\newblock {\em Physical review letters}, 103(5):058701, 2009.

\bibitem{New01}
Mark~EJ Newman, Steven~H Strogatz, and Duncan~J Watts.
\newblock Random graphs with arbitrary degree distributions and their
  applications.
\newblock {\em Physical review E}, 64(2):026118, 2001.

\bibitem{Ost13}
Liudmila Ostroumova, Alexander Ryabchenko, and Egor Samosvat.
\newblock Generalized preferential attachment: tunable power-law degree
  distribution and clustering coefficient.
\newblock In {\em WAW}, pages 185--202. Springer, 2013.

\bibitem{Pal07}
Gergely Palla, Albert-L{\'a}szl{\'o} Barab{\'a}si, and Tam{\'a}s Vicsek.
\newblock Quantifying social group evolution.
\newblock {\em arXiv preprint arXiv:0704.0744}, 2007.

\bibitem{Rav02}
Erzs{\'e}bet Ravasz, Anna~Lisa Somera, Dale~A Mongru, Zolt{\'a}n~N Oltvai, and
  A-L Barab{\'a}si.
\newblock Hierarchical organization of modularity in metabolic networks.
\newblock {\em Science}, 297(5586):1551--1555, 2002.

\bibitem{ruk11}
Josef Rukavicka.
\newblock On generalized dyck paths.
\newblock {\em The Electronic Journal of Combinatorics}, 18(1):P40, 2011.

\bibitem{schmudgen2017moment}
Konrad Schm{\"u}dgen.
\newblock {\em The moment problem}, volume~9.
\newblock Springer, 2017.

\bibitem{Ses12}
Comandur Seshadhri, Tamara~G Kolda, and Ali Pinar.
\newblock Community structure and scale-free collections of
  erd{\H{o}}s-r{\'e}nyi graphs.
\newblock {\em Physical Review E}, 85(5):056109, 2012.

\bibitem{sho43}
James~Alexander Shohat and Jacob~David Tamarkin.
\newblock {\em The problem of moments}.
\newblock Number~1. American Mathematical Soc., 1943.

\bibitem{Son05}
Sen Song, Per~Jesper Sj{\"o}str{\"o}m, Markus Reigl, Sacha Nelson, and Dmitri~B
  Chklovskii.
\newblock Highly nonrandom features of synaptic connectivity in local cortical
  circuits.
\newblock {\em PLoS biology}, 3(3):e68, 2005.

\bibitem{sta97}
Richard~P Stanley.
\newblock {\em Enumerative Combinatorics. Vol. 1, vol. 49 of Cambridge Studies
  in Advanced Mathematics}.
\newblock Cambridge University Press, Cambridge, 1997.

\bibitem{ste78}
Paul~R Stein.
\newblock On a class of linked diagrams, i. enumeration.
\newblock {\em Journal of Combinatorial Theory, Series A}, 24(3):357--366,
  1978.

\bibitem{Ste05}
Ulrich Stelzl, Uwe Worm, Maciej Lalowski, Christian Haenig, Felix~H Brembeck,
  Heike Goehler, Martin Stroedicke, Martina Zenkner, Anke Schoenherr, Susanne
  Koeppen, et~al.
\newblock A human protein-protein interaction network: a resource for
  annotating the proteome.
\newblock {\em Cell}, 122(6):957--968, 2005.

\bibitem{stieltjes1894recherche}
T-J Stieltjes.
\newblock Recherches sur les fractions continues.
\newblock In {\em Annales de la Facult{\'e} des sciences de Toulouse:
  Math{\'e}matiques}, volume~8, pages J1--J122, 1894.

\bibitem{sze76}
E~Szemer{\'e}di.
\newblock Regular partitions of graphs, colloques internationaux cnrs no
  260--problemes combinatoires et th{\'e}orie des graphes, orsay.
\newblock {\em Computer Science Department Worcester Polytechnic Institute
  Worcester, MA 01609 USA}, pages 399--401, 1976.

\bibitem{vei16}
Victor Veitch and Daniel~M Roy.
\newblock Sampling and estimation for (sparse) exchangeable graphs.
\newblock {\em arXiv preprint arXiv:1611.00843}, 2016.

\bibitem{Vol04}
Erik Volz.
\newblock Random networks with tunable degree distribution and clustering.
\newblock {\em Physical Review E}, 70(5):056115, 2004.

\bibitem{Xie11}
Jierui Xie, Boleslaw~K Szymanski, and Xiaoming Liu.
\newblock Slpa: Uncovering overlapping communities in social networks via a
  speaker-listener interaction dynamic process.
\newblock In {\em Data Mining Workshops (ICDMW), 2011 IEEE 11th International
  Conference on}, pages 344--349. IEEE, 2011.

\bibitem{yang2012community}
Jaewon Yang and Jure Leskovec.
\newblock Community-affiliation graph model for overlapping network community
  detection.
\newblock In {\em 2012 IEEE 12th international conference on data mining},
  pages 1170--1175. IEEE, 2012.

\bibitem{Yeg04}
Esti Yeger-Lotem, Shmuel Sattath, Nadav Kashtan, Shalev Itzkovitz, Ron Milo,
  Ron~Y Pinter, Uri Alon, and Hanah Margalit.
\newblock Network motifs in integrated cellular networks of
  transcription-regulation and protein-protein interaction.
\newblock {\em Proceedings of the National Academy of Sciences of the United
  States of America}, 101(16):5934--5939, 2004.

\end{thebibliography}

\appendix

\section{Limitations of previous approaches for achieving high cycle-to-edge ratios}\label{sec:lopa}

A natural approach to constructing a \network with high simple cycle density is to repeatedly add simple cycles on a randomly chosen subset of vertices. However, this process yields low cycle to edge ratios for sparse \networks. For example, a \network on $n$ vertices with average degree less than $\sqrt{n}$ built by randomly adding triangles will have a triangle-to-edge-ratio at most 2/3. (See \Cref{just add tri}.) In \cite{New09} Newman considers a similar approach which produces \networks with varied degree sequences and triangle-to-edge ratio strictly less than 1/3. However, it is not hard to construct \networks with arbitrarily high triangle ratio (growing with the size of the \network).

\begin{theorem} \label{just add tri} Let $G$ be a \network on $n$ vertices obtained by repeatedly adding triangles on sets of three randomly chosen vertices. If the average degree is less than $\sqrt{n}$, the expected ratio of triangles to edges is at most 2/3. \end{theorem}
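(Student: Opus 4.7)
The plan is to use linearity of expectation to compute $\E{|E(G)|}$ and $\E{|T(G)|}$, then decompose the triangles of $G$ according to how they arose. Let $t$ be the number of triangle-addition steps, set $\kappa=1/\binom{n}{3}$, and let $X_s$ denote the unordered triple chosen in step $s$. By symmetry, $\E{|E(G)|}=\binom{n}{2}\Pr{(x,y)\in E(G)}$ and $\E{|T(G)|}=\binom{n}{3}\Pr{\{x,y,z\}\text{ is a triangle of }G}$ for arbitrary fixed pair and triple.

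Every triangle $\{x,y,z\}$ of $G$ is either \emph{directly added} (some step satisfied $X_s=\{x,y,z\}$) or \emph{emergent} (no step equaled $\{x,y,z\}$, yet each of $(x,y)$, $(x,z)$, $(y,z)$ was added in a step of the form $\{x,y,u\}$, $\{x,z,u\}$, $\{y,z,u\}$ with $u\notin\{x,y,z\}$). Since any such step contributes only one of the three edges of $\{x,y,z\}$, an emergent triangle demands three distinct contributing steps. Writing $T_d,T_e$ for the two counts, $|T_d|\le t$ trivially. By the union bound over ordered triples of distinct steps (with each step matched to one of the three edges),
\[
\Pr{\{x,y,z\}\in T_e}\;\le\;t(t-1)(t-2)\bigl((n-3)\kappa\bigr)^3\;\le\;t^3(n-3)^3\kappa^3,
\]
where $(n-3)\kappa$ is the probability a given step has the form $\{x,y,u\}$ with $u\notin\{x,y,z\}$. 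Summing over the $\binom{n}{3}$ triples and using $\binom{n}{3}\kappa=1$ together with $(n-3)^3\kappa^2\le 36/(n-2)^3$ gives $\E{|T_e|}\le 36t^3/(n-2)^3$.

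For the denominator, $\Pr{(x,y)\in E(G)}=1-(1-q_2)^t$ with $q_2=(n-2)\kappa=6/(n(n-1))$, and Bonferroni gives $\E{|E(G)|}\ge 3t\bigl(1-(t-1)q_2/2\bigr)$. Interpreting ``average degree less than $\sqrt{n}$'' as $\E{|E(G)|}\le n^{3/2}/2$ and combining with $\E{|E(G)|}\ge 3t(1-o(1))$ (valid whenever $t=o(n^2)$) forces $t\le n^{3/2}/6+o(n^{3/2})$. Hence
\[
\frac{\E{|T(G)|}}{\E{|E(G)|}}\;\le\;\frac{t+36t^3/(n-2)^3}{3t\bigl(1-o(1)\bigr)}\;=\;\frac{1}{3}\bigl(1+o(1)\bigr)+\frac{12t^2}{(n-2)^3}\bigl(1+o(1)\bigr),
\]
and the second summand is bounded above by $n^3/[3(n-2)^3]\to 1/3$, yielding the claimed bound of $2/3$.

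The main obstacle is that $2/3$ arises as $1/3+1/3$: one share from directly added triangles (always), and one from emergent triangles precisely at the threshold $d=\sqrt{n}$. The proof therefore depends on the union bound for $|T_e|$ being tight at leading order---the factor $6$ from the $3!$ edge-to-step assignments is exactly what makes $36t^3/(n-2)^3$ match $1/3$ at $t\asymp n^{3/2}/6$---and on careful handling of the $o(1)$ corrections in $\E{|E(G)|}$ to close off the estimate cleanly. Beyond $d=\sqrt{n}$ the emergent term grows like $d^2/n$, explaining why $2/3$ is the natural ceiling on this construction.
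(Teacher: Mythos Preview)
Your proposal is correct and follows essentially the same approach as the paper's proof: decompose triangles into the $t$ directly added ones and the ``emergent'' ones, bound the latter by roughly $36t^3/n^3$, take the edge count to be about $3t$, and use $t\lesssim n^{3/2}/6$ to obtain $1/3+1/3=2/3$. The paper's version is a brief heuristic calculation (writing $d=6t/n$ and estimating emergent triangles as $(d/n)^3\binom{n}{3}$ as though the graph were $G(n,d/n)$), whereas you supply the justifications via union bounds and Bonferroni; the numerical outcome is identical.

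One small remark: your closing comment that the factor $6$ in the $36$ ``comes from the $3!$ edge-to-step assignments'' is not quite the right bookkeeping. The $36=6^2$ arises from $\kappa^2$ via $\binom{n}{3}=n(n-1)(n-2)/6$; your use of ordered step-triples $t(t-1)(t-2)$ already absorbs the $3!$ assignments, and that factor is what is cancelled by the single $\binom{n}{3}$ when summing over triples. This does not affect the argument's validity, only the narrative about where the constant originates.
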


\begin{proof} 
	Let $t$ be the number of triangles added and $d$ the average degree, so $d = 6t/n$. To ensure that $d < \sqrt{n}$, $t< n^{3/2}/6$. The total expected number of triangles in the \network is $t + (d/n)^3 {n \choose 3}= t +d^3/6= t+36t^3/n^3$. It follows that the expected ratio of triangles to edges is at most $$  \frac{t +36\left(\frac{t}{n}\right)^3}{3t}\leq  \frac{2}{3}.$$ 
\end{proof}

\begin{proof}(of \Cref{4bound})	
	Let $\sigma_1 \dots \sigma_{rank(M)}$ denote the eigenvalues of $M$. Let $G$ be a graph sampled from $M$. Let $C_k(G)$ be the number of simple $k$ cycles in $G$. Observe
	\begin{align*} 
	\E{C_k(G)}&=\sum_{i_1 \not=i_2 \dots \not=i_k} M_{i_1i_2}M_{i_2 i_3}\dots M_{i_k i_1}\\
	&\leq Tr(M^k)\\
	&= \sum_{i=1}^{rank(M)}\sigma_i^k\\
	&\leq rank(M) d^k.
	\end{align*}
\end{proof}

\section{Connectivity of the ROC model}\label{sec:con}
We describe the thresholds for connectivity for ROC graphs with one community type, $ROC(n,d,s,q)$. A vertex is isolated if it is has no adjacent edges. A community is isolated if it does not intersect any other communities. Here we use the abbreviation a.a.s. for asympotically almost surely. An event $A_n$ happens a.a.s. if  $\Pr{A_n} \to 1$ as $n\to \infty$. 

\begin{theorem}\label{no isolated vertices} Let $\ve>0$. For $  d\leq sqe^{sq}(1-\ve)$ and $d =\Omega\brac{sq\ln{n}}$, a \network from $ROC(n,d,s,q)$ a.a.s. has no isolated vertices. \end{theorem}

\begin{proof} We begin by computing the probability a vertex is isolated,
	\begin{align*} 
	\Pr{v \text{ is isolated}  }&=   \sum_{i=0}^{\frac{nd}{s^2q}} \Pr{v \text{ is in $i$ communities} } (1-q)^{si}\\
	&= \lo \sum_{i=1}^{\frac{nd}{s^2q}} { \frac{nd}{s^2q} \choose i} \bfrac{s}{n}^i \brac{1-\frac{s}{n}}^{\frac{nd}{s^2q} -i} e^{-sqi}\\
	&\leq \lo e^{-\frac{d}{sq}}\sum_{i=0}^{\frac{nd}{s^2q}}  \bfrac{de^{-sq+\frac{s}{n}}}{sq}^ i   \\
	&= \lo e^{-\frac{d}{sq}} \sum_{i=1}^{\frac{nd}{s^2q}}  \bfrac{de^{-sq}}{sq}^ i \\
	&=\lo \brac{ e^{-\frac{d}{sq}}}\bfrac{1}{1-\ve}.
	\end{align*}

	Let $X$ be a random variable that represents the number of isolated vertices of a \network drawn from $ROC(n,d,s,q)$. We compute $$\Pr{X>0} \leq \E{X} = \lo n \brac{ e^{-\frac{d}{sq}}}\bfrac{1}{1-\ve}
	=o(1).$$
\end{proof}

\begin{theorem} \label{no isolated communities} 
	A \network from  $ROC(n,d,s,q)$ with $s=o(\sqrt{n})$ has no isolated communities a.a.s. if 
	$$\frac{d}{q}> \log{\frac{nd}{s^2q}}.$$
\end{theorem}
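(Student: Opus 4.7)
The approach is a standard first-moment argument. Let $m = \lceil nd/(s(s-1)q) \rceil = \lo nd/(s^2 q)$ denote the total number of communities added during construction, and let $X$ be the random variable counting isolated communities. It suffices to show $\E{X} = o(1)$, since Markov's inequality then yields $\Pr{X \geq 1} \leq \E{X} = o(1)$.

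For a fixed community $C$, the construction includes each vertex in each community independently with probability $s/n$, so the event ``$C$ is isolated'' factorizes over vertices: each vertex $v$ must either be excluded from $C$, or be in $C$ and absent from all other $m-1$ communities. The per-vertex probability of this compound event is $1 - (s/n)\brac{1 - (1-s/n)^{m-1}}$, so by independence across vertices
$$
\Pr{C \text{ isolated}} = \brac{1 - \frac{s}{n}\brac{1 - (1-s/n)^{m-1}}}^n \leq \exp\brac{-s\brac{1 - (1-s/n)^{m-1}}}.
$$
Applying $(1-s/n)^{m-1} \leq e^{-(m-1)s/n}$ and setting $\lambda := (m-1)s/n = \lo d/(sq)$, this upper bound simplifies to $\exp(-s(1 - e^{-\lambda}))$, and summing over the $m$ communities gives
$$
\E{X} \leq \frac{nd}{s^2 q}\exp\brac{-s(1 - e^{-\lambda})}.
$$

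The remaining step is to show that $s(1-e^{-\lambda})$ exceeds $\log(nd/(s^2q))$ by a divergent amount. I would use the Taylor bound $1 - e^{-\lambda} \geq \lambda - \lambda^2/2$ in the regime $\lambda = o(1)$, which yields $s(1 - e^{-\lambda}) = (1-o(1))d/q$, so that the hypothesis $d/q > \log(nd/(s^2 q))$ (interpreted as a divergent gap, the standard convention for connectivity thresholds) suffices. In the complementary regime where $\lambda$ is bounded below by a positive constant, one uses $1 - e^{-\lambda} = \Omega(1)$ to obtain $s(1 - e^{-\lambda}) = \Omega(s)$, and combines this with $d/q \geq \lambda s$ and the hypothesis to get the needed lower bound. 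The assumption $s = o(\sqrt{n})$ enters only to justify the various approximations of $(1-s/n)$ by $e^{-s/n}$.

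The main obstacle is precisely this threshold calibration: the bound $1 - e^{-\lambda} \leq \lambda$ is loose when $\lambda$ is a positive constant, so one has to interpret the strict inequality in the hypothesis as a divergent gap and handle the crossover regime $\lambda = \Theta(1)$ with some care to avoid losing a constant factor that would spoil the first-moment estimate. Apart from this calibration, the argument is an entirely routine first-moment calculation.
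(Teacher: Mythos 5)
Your approach differs from the paper's. The paper constructs an auxiliary ``community graph'' on the $m$ communities (vertices $=$ communities; edge iff they share a vertex), declares it an Erd\H{o}s--R\'enyi instance $G(m,p)$ with $p\approx s^2/n$, and invokes the classical no-isolated-vertex threshold. That reduction is not actually sound: the events that a fixed community $C$ meets $B_1,\dots,B_{m-1}$ all share the randomness of $C$'s vertex set and so are \emph{not} independent, and the iid formula $(1-p)^{m-1}\approx e^{-s\lambda}=e^{-d/q}$ (with $\lambda=(m-1)s/n\approx d/(sq)$) underestimates $\Pr{C\text{ isolated}}$. You instead compute the exact probability by factorizing over vertices, arriving at $[1-\sigma(1-(1-\sigma)^{m-1})]^n\approx e^{-s(1-e^{-\lambda})}$ with $\sigma=s/n$; this is both more direct and more rigorous than the paper's route, and it recovers the paper's exponent $d/q$ only in the regime $\lambda=o(1)$.

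But the gap you flag as ``threshold calibration'' is not a bounded-factor nuisance; it is where the argument actually breaks. When $\lambda\to\infty$ the exponent saturates, $s(1-e^{-\lambda})\to s$, so the first moment is $\E{X}\approx m e^{-s}$, while the hypothesis $d/q=\lambda s>\log m$ only gives $s>(\log m)/\lambda$, which can be far below the needed $s>\log m$. Your claim that ``$s(1-e^{-\lambda})=\Omega(s)$ together with $d/q\geq\lambda s$ and the hypothesis gives the needed lower bound'' is valid only when $\lambda=O(1)$. Concretely, take $s=2$, $q=1$, $d=2\log n$, so $\lambda=\log n$ and $m\approx(n\log n)/2$: the theorem's hypothesis is satisfied, yet $\E{X}\approx m e^{-2}\to\infty$, and even after discarding empty communities the expectation is $\Theta(\log n)$. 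So your honest computation exposes a regime in which the first-moment estimate does not close and the stated hypothesis appears insufficient without an additional assumption such as $s\to\infty$; the paper's independence shortcut hides this by silently replacing the correct exponent $s(1-e^{-\lambda})$ by the larger $d/q$.
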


\begin{proof} 
	We construct a ``community graph" and apply the classic result that $G(n,p)$ will a.a.s. have no isolated vertices when $p> (1+\eps)\log{n}/n$ for any $\eps>0$\cite{Erd59}. In the ``community graph" each vertex is a community and there is an edge between two communities if they share at least one vertex; a ROC \network has no isolated communities if and only if the corresponding community graph is connected. The probability two communities don't share a vertex is $\brac{1-\bfrac{s}{n}^{2}}^n$. Since communities are selected independently, the community \network is an instance of $G\brac{\frac{nd}{s(s-1)q}, \brac{1-  \bfrac{s}{n}^{2}}^n   }$. By the classic result, approximating the parameters by $\frac{nd}{s^2q}, 1-e^{s^2/n}$, this \network is connected when $$1-e^{-s^2/n}> \frac{\log{\frac{nd}{s^2q}}}{\frac{nd}{s^2q}}.$$ Since $s= o(\sqrt{n})$ is small, the left side of the inequality is approximately $s^2/n$, yielding the equivalent statement $$\frac{d}{q}>\log\frac{nd}{s^2q}.$$
\end{proof}

Note that the threshold for isolated vertices is higher, meaning that if a ROC \network a.a.s has no isolated vertices, then it a.a.s has no isolated communities. These two properties together imply the \network is connected.

\section{Approximating the hypercube's set of eigenvalues}

The ROC model captures the first $k$ moments of the eigenspectrum of the hypercube. To illustrate the difficultly of capturing the moments with a random model, here we present an approximation method that preserves the set of eigenvalues of the hypercube (without multiplicity) and show this is not enough to also capture the moments. This approximation is a $2^d \times 2^d$ matrix $M$ of rank $d$ with entries in the interval $[0,1]$ that has the same set of eigenvalues as the $d$-dimensional hypercube. A graph is produced by connecting vertices $v_i$ and $v_j$ with probability $M_{i,j}$. 

 The following well-known claim describes the eigenspectrum of the hypercube. 

\begin{proposition} Let $A_d$ be the adjacency matrix of the $d$-dimensional hypercube graph. Then for $i \in \{0,1, \dots d\}$,  $-d+2i$ is an eigenvalue of $A_d$ with multiplicity ${d \choose i}$.\end{proposition}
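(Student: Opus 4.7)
The plan is to identify the $d$-dimensional hypercube as the Cayley graph of $G = (\Z/2\Z)^d$ with generators $e_1, \dots, e_d$ (the standard basis vectors), and then diagonalize $A_d$ using the characters of this abelian group. This yields all eigenvalues and their multiplicities in a single calculation, with no induction required.

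First I would label the vertices by elements of $(\Z/2\Z)^d$, so that $u \sim v$ iff $u - v \in \{e_1, \dots, e_d\}$. For each subset $S \subseteq [d]$, define the character $\chi_S: G \to \{\pm 1\}$ by $\chi_S(x) = (-1)^{\sum_{i \in S} x_i}$, and let $v_S \in \R^{2^d}$ be the vector with entries $(v_S)_x = \chi_S(x)$. The main computation is then
\[
(A_d v_S)_x = \sum_{j=1}^d \chi_S(x + e_j) = \chi_S(x) \sum_{j=1}^d \chi_S(e_j) = \chi_S(x) \bigl((d - |S|) - |S|\bigr) = (d - 2|S|)\, \chi_S(x),
\]
where I used that $\chi_S(e_j) = -1$ iff $j \in S$. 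Hence $v_S$ is an eigenvector of $A_d$ with eigenvalue $d - 2|S|$.

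Next, I would invoke the standard orthogonality of characters: the $2^d$ vectors $\{v_S\}_{S \subseteq [d]}$ are pairwise orthogonal (hence linearly independent) and form a basis of $\R^{2^d}$. Therefore these are all the eigenvectors of $A_d$ counted with multiplicity. Grouping by eigenvalue, for each $i \in \{0, 1, \dots, d\}$, the eigenvalue $-d + 2i$ arises precisely from those $S$ with $|S| = d - i$, of which there are $\binom{d}{d-i} = \binom{d}{i}$. This gives the claimed multiplicities.

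The computation is elementary and the only ``obstacle'' is checking the character sum $\sum_j \chi_S(e_j)$, which is immediate. As an optional sanity check, one can verify $\sum_{i=0}^d \binom{d}{i} = 2^d$ (correct dimension) and $\sum_{i=0}^d \binom{d}{i}(-d+2i) = 0 = \mathrm{tr}(A_d)$, consistent with $A_d$ having zero diagonal.
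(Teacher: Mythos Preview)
Your proof is correct. The character-of-$(\Z/2\Z)^d$ argument is the standard way to diagonalize the hypercube adjacency matrix, and each step---the eigenvalue computation $\sum_j \chi_S(e_j) = d - 2|S|$, the orthogonality of characters, and the count $\binom{d}{d-i} = \binom{d}{i}$---is clean and complete.

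There is nothing to compare against: the paper states this proposition as a ``well-known claim'' and does not supply a proof. Your argument fills that gap entirely.
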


	We approximate the adjacency matrix $A_d$ of the $d$-dimensional hypercube graph by dividing the hypercube into layers. Layer $i$ consists of the $\ell_i={ d \choose i}$ vertices whose labels have precisely $i$ zeros. We let $p_i$ be the fraction of edges between layer $i$ and layer $i+1$. Each vertex in layer $i$ has $d-i$ neighbors in layer $i+1$, so $p_i=\frac{d-i}{\ell_{i+1}}$. We construct $M_d$, a $2^d \times 2^d$ matrix as follows. We partition the rows and columns into segments such that the $i^{th}$ segment has width $\ell_i={ d \choose i}$. These partitions induce a block structure on the matrix; the $ij$ block contains all of entries the matrix $M_d$ in which the row index is in the $i^{th}$ segment and the column index is in the $j^{th}$ segement. Each entry of $M_d$ on the block $ij$ is equal to the probability that two distinct randomly selected vertices from layer $i$ and layer $j$ are adjacent.

 \begin{figure}[h]
\centering
\includegraphics[scale=.5]{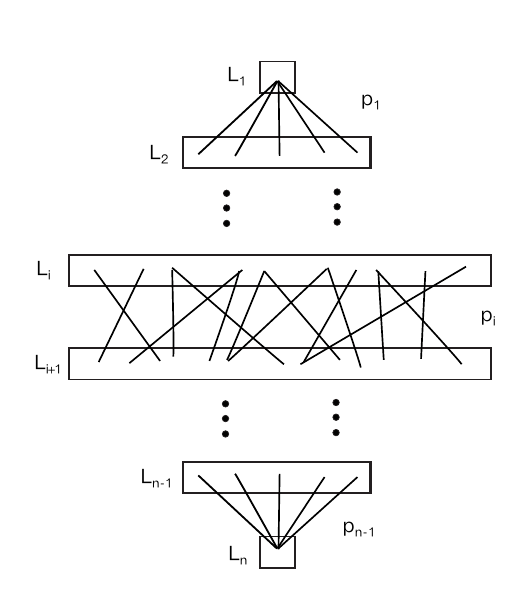}
\caption{ A graphical representation of a graph sampled from $M_d$.}
	\end{figure}
		
\begin{lemma}
The set of eigenvalues of $M_d$ is precisely the set of eigenvalues of the $d$-dimensional hypercube. 
\end{lemma}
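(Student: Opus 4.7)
The plan is to reduce the analysis of $M_d$ to a common $(d+1)\times(d+1)$ tridiagonal matrix $T$ that also describes the action of the hypercube adjacency matrix $A_d$ on a layer-symmetric subspace. Let $V\subset\mathbb{R}^{2^d}$ be the $(d+1)$-dimensional subspace of vectors that are constant on each layer, spanned by the layer indicators $\mathbf{1}_0,\ldots,\mathbf{1}_d$. Because every block of $M_d$ has constant entries, any vector that sums to zero within each layer is annihilated by $M_d$, so $V^\perp\subseteq\ker M_d$, while the image of $M_d$ is contained in $V$. Consequently the nonzero eigenvalues of $M_d$ are exactly the eigenvalues of its restriction $M_d|_V$, and $0$ is an eigenvalue of $M_d$ with multiplicity at least $2^d-(d+1)$.

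A direct summation of the block entries of $M_d$ row-by-row gives
\[
M_d\mathbf{1}_j \;=\; (d-j+1)\,\mathbf{1}_{j-1}\;+\;(j+1)\,\mathbf{1}_{j+1},
\]
with the convention $\mathbf{1}_{-1}=\mathbf{1}_{d+1}=0$: the layer-$(j-1)$ contribution is $\ell_j\cdot p_{j-1}=d-j+1$, and the layer-$(j+1)$ contribution is $\ell_j\cdot(j+1)/\ell_j=j+1$ (using that block $(j+1,j)$ has constant entry $(j+1)/\ell_j$ by a symmetric edge count). But this is precisely the action of $A_d$ on layer indicators, since in the hypercube a vertex in layer $j$ has $d-j+1$ neighbors in layer $j-1$ and $j+1$ neighbors in layer $j+1$. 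Hence $M_d|_V$ and $A_d|_V$ are represented by the same tridiagonal matrix $T$ in the layer-indicator basis.

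It remains to identify the spectrum of $T$ as the set $\{-d+2k:k=0,\ldots,d\}$ of distinct eigenvalues of $A_d$. For each $k\in\{0,1,\ldots,d\}$, define $\xi_k=\sum_{|S|=k}\chi_S$, where $\chi_S(x)=(-1)^{|S\cap\mathrm{supp}(x)|}$ is the $S$-character of the hypercube with $A_d\chi_S=(d-2|S|)\chi_S$. Then $\xi_k$ is an $A_d$-eigenvector with eigenvalue $d-2k$, and since it is invariant under coordinate permutations it depends on $x$ only through the Hamming weight $t=|\mathrm{supp}(x)|$, so $\xi_k\in V$. Regarded as a function of $t$, $\xi_k$ equals the Krawtchouk polynomial $K_k(t;d)=\sum_{j=0}^k(-1)^j\binom{t}{j}\binom{d-t}{k-j}$ of degree exactly $k$, so $\xi_0,\ldots,\xi_d$ are linearly independent and form a basis of $V$. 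Therefore $T$ has the $d+1$ distinct eigenvalues $d-2k$, which together with the first paragraph yields the claim (the zero eigenvalue contributed by $V^\perp\subseteq\ker M_d$ already appears in the hypercube spectrum when $d$ is even, and otherwise the statement should be read as equality on the nonzero part of the spectrum). The step I expect to require the most care is this last identification: while every eigenvalue of $T$ is automatically a hypercube eigenvalue from $T=A_d|_V$, showing that all $d+1$ distinct values $d-2k$ actually appear requires producing a layer-symmetric eigenvector for each, for which the sums of characters (Krawtchouk polynomials) serve neatly.
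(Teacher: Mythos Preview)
Your proof is correct and takes a genuinely different route from the paper. The paper collapses $M_d$ to the $(d{+}1)\times(d{+}1)$ matrix $M_d^*$ (your $T$) and then computes its spectrum by an induction on $d$: it writes $M_d^*$ as a block containing $M_{d-1}^*$ plus a shift matrix, and shows that an eigenvector $v$ of $M_{d-1}^*$ with eigenvalue $\lambda$ yields eigenvectors $\binom{0}{v}\pm\binom{v}{0}$ of $M_d^*$ with eigenvalues $\lambda\pm 1$. Your argument instead observes directly that $M_d|_V=A_d|_V$ on the layer-constant subspace and then reads off the spectrum of $A_d|_V$ using the symmetrized characters (Krawtchouk polynomials). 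Your approach is more conceptual---it explains \emph{why} the eigenvalues coincide (block-averaging $A_d$ leaves its action on $V$ unchanged)---while the paper's induction is self-contained and avoids invoking character theory. You also correctly flag the caveat about the $0$ eigenvalue for odd $d$; the paper's final sentence (``since $M_d$ has rank $d+1$, this is precisely the set of eigenvalues'') glosses over the same point.
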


\begin{proof}
Let $M^*_d$ be the $d\times d$ matrix obtained from $M_d$ by the following procedure. For each of the $d+1$ blocks, replace the $\ell_i$  rows corresponding to block $i$ with a single  $1 \times 2^d $ row equal to the sum of the $\ell_i$ rows. This yields a $d \times 2^d$ matrix. Delete all duplicate columns to obtain 
$$M^*_d=  \begin{bmatrix} 
		0 & 1 &  && \dots   & \\
		n&0 & 2& & &\\
		&n-1& 0 && &\\
		&&& \ddots&&\\
		\vdots&&&& 0& n\\
		&&&&1&0.
		\end{bmatrix}$$

First we claim that if $\lambda$ is an eigenvalue of $M^*_d$, then $\lambda$ is also an eigenvalue of $M_d$. Suppose $v=(v_0, \dots v_d)$ is an eigenvector of $M^*_d$ with eigenvalue $\lambda$. Let $v^\ast$ be the vector obtained by replacing each entry $v_i$ with $\ell_i$ entries with value $v_i / \ell_i$. Note $v^\ast$ is an eigenvector of $M_d$ with eigenvalue $\lambda$.

Next we show that the eigenvalues of $M^*_d$ are $\{-d, -d+2, \dots , d\}$ by induction on $d$.  	 
		 We use the following identities 
	$$ M^*_d= \begin{bmatrix}  &&&&&&0\\
	&&&  &&&0\\
	&&&  \mathbf{M^*_{d-1}}  &&&\vdots\\
	&&&&  &&0 \\
	&&&&&&n \\
	&0&0&\ldots&0&0&0 \end{bmatrix} \quad+\quad
	\begin{bmatrix} 0 &&&&&\\
	1 & 0&&&&\\
	& 1 & 0 &&&\\
	&& \ddots & \ddots && \\
	&&&&\\
	&&&&1&0 \end{bmatrix}
	 $$
	
		$$M^*_d= \begin{bmatrix}  0&0&0&\dots&0&0\\
	n&&&  &&\\
	0&&&  & &\\
	0&&&\mathbf{M^*_{d-1}}  &&  \\
	\vdots&&&&&\\
	0&&&&&\end{bmatrix} \quad + \quad
	\begin{bmatrix} 0&1 &&\\
	 &0& 1 &\\

	&& \ddots & \ddots && \\
	&&&&&\\
	&&&&0&1\\
	&&&&&0 \end{bmatrix}.
	 $$
	 
	First  note ${M^*_1}= \begin{bmatrix} 0& 1\\1& 0 \end{bmatrix}$ has eigenvalues $-1$ and $1$, establishing the base case. 
	Next we show that if $v$ is an eigenvector of $M^\ast_{d-1}$ with eigenvalue $\lambda$, then $\begin{pmatrix} 0 \\ v\end{pmatrix}- \begin{pmatrix} v \\ 0 \end{pmatrix}$ and $\begin{pmatrix} 0 \\ v\end{pmatrix}+  \begin{pmatrix} v \\ 0 \end{pmatrix}$ are eigenvectors of $M^*_d$ with eigenvalues $ \lambda-1$ and $\lambda +1$ respectively.  Apply the above identities we obtain $$M^*_d \left( \begin{pmatrix} 0 \\ v\end{pmatrix}- \begin{pmatrix} v \\ 0 \end{pmatrix} \right) = (\lambda -1) \begin{pmatrix} 0 \\ v\end{pmatrix} + \left(-\lambda +1 \right)\begin{pmatrix} v \\ 0 \end{pmatrix}= ( \lambda-1) \left( \begin{pmatrix} 0 \\ v\end{pmatrix}- \begin{pmatrix} v \\ 0 \end{pmatrix} \right)$$ 
	
	\begin{align*}M^*_d \left( \begin{pmatrix} 0 \\ v\end{pmatrix}+  \begin{pmatrix} v \\ 0 \end{pmatrix} \right) &= (\lambda +1) \begin{pmatrix} 0 \\ v\end{pmatrix} + \left(\lambda +1 \right)\begin{pmatrix} v \\ 0 \end{pmatrix}
	= ( \lambda+1) \left( \begin{pmatrix} 0 \\ v\end{pmatrix}+ \begin{pmatrix} v \\ 0 \end{pmatrix} \right).\end{align*}
	
	We have shown that $\{-d, -d+2, \dots d\}$ are eigenvalues of $M_d$. Since $M_d$ has rank $d+1$, this is precisely the set of eigenvalues of $M_d$.\end{proof}

Let $S$ be a graph on $n=2^d$ vertices sampled from $M_d$. Note that the expected average degree of $S$ is $d$, and so there approximately $2nd^2$ closed four walks in $S$ that trace trees. In the hypercube, there are $2nd^2$ such walks and an additional $nd^2$ closed four walks that trace simple cycles. However, the expected number of simple four cycles in $S$ is 	$$\E{ C_4(S)}= 8\sum_{i=0}^{n-2} \ell_i {\ell_{i+1} \choose 2} \ell_{i+2} p_i^2p_{i+1}^2 =\Theta\brac{d^5} =o\brac{nd^2}.$$ 
Therefore the expected number of four walks in $S$ is $(2+ o(1)) nd^2$, whereas it is $(3+o(1))nd^2$ for the hypercubes.

\section{ROC as a model for real-world graphs} \label{sec:rw}

\paragraph{Modeling the clustering coefficient of real-world graphs.} 
In \Cref{bounds}, we prove the average clustering coefficient of a ROC \network (with one community type) is approximately $sq^2/d$, meaning that tuning the parameters $s$ and $q$ with $d$ fixed yields wide range of clustering coefficients for a fixed density. Furthermore, \Cref{degree cc} describes the inverse relationship between degree and clustering coefficient in ROC \networks, a phenomena observed in protein-protein interaction \networks, the internet, and various social networks \cite{Ste05, Mah06, Mis07, Yon07}.

\paragraph{Diverse degree distributions and the DROC model.}\label{extension}
We also introduce an extension of our model which produces \networks that match a target degree distribution in expectation. The extension uses the Chung-Lu configuration model: given a degree sequence $d_1, \dots d_n$, an edge is added between each pair of vertices $v_i$ and $v_j$ with probability $\frac{d_i d_j}{\sum_{i=1}^n d_i}$, yielding a \network where the expected degree of vertex $v_i$ is $d_i$ \cite{Chu02}. In the  DROC model,  a modified Chung-Lu random \network is placed instead of an E-R random \network in each iteration. Instead of normalizing the probability an edge is selected in a community by the sum of the degrees in the community, the normalization constant is the expected sum of the degrees in the community.

\subsection{Approximating clustering coefficient}\label{sec:cc}

Closely related to the density of triangles is the clustering coefficient at a vertex $v$, the probability two randomly selected neighbors are adjacent: $$C(v)= \frac{|\{\{a,b\} : a, b \in N(v), a\sim b\}|}{deg(v)(deg(v)-1)/2}.$$ Equivalently the clustering coefficient is twice the ratio of the number of triangles containing $v$ to the degree of $v$ squared. 
Figure \ref{ccbar} illustrates the markedly high clustering coefficients of real-world \networks as compared with Erd\H{o}s-R\'enyi (E-R) \networks of the same density.  We show that the ROC model can be tuned to produce graphs with a variety of clustering coefficients at any density. The proofs in this section are quite technical and left to \Cref{ccproofs}.
\begin{figure}
	\centering
	\includegraphics[scale=.7]{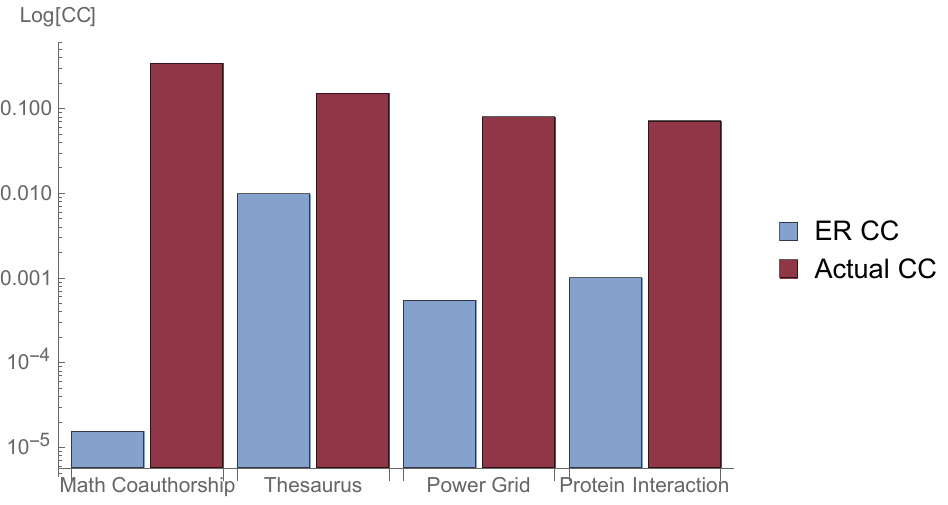}
	\caption{The clustering coefficient in real world \networks is much greater than that of an E-R random graph of the same density. Data from Table 3.1 of \cite{New03}.} \label{ccbar}
\end{figure}

\Cref{bounds} gives an approximation of the expected clustering coefficient when the degree and average number of communities per vertex grow with $n$. The exact statement is given in \Cref{exact}, and bounds in a more general setting are given by \Cref{more general cc}.

\begin{theorem} \label{bounds} Let $C(v)$ denote the clustering coefficient of a vertex $v$ with degree at least 2 in a \network drawn from $ROC(n,d,s,q)$ with $d=o(\sqrt{n})$, $d=\Omega(s)$, $d<s q e^{sq}$, $d= \omega( sq \log\frac{nd}{s})$, $s^2q=\omega(1)$, and $sq =o(d)$. Then 
	$$\E{C(v)}=\left(1+o(1) \right) \frac{s q^2}{d}.$$ 
\end{theorem}

Unlike in E-R \networks in which local clustering coefficient is independent of degree, higher degree vertices in ROC \networks have lower clustering coefficient. High degree vertices tend to be in more communities, and thus the probability two randomly selected neighbors are in the same community is lower.  Figure \ref{compare oc er} illustrates the relationship between degree and clustering coefficient, the degree distribution, and the clustering coefficient for two ROC \networks with different parameters and the E-R random \network of the same density.

\begin{theorem} \label{degree cc}
	Let $C(v)$ denote the clustering coefficient of a vertex $v$ in a \network drawn from $ROC(n,d,s,q)$ with $d=o(\sqrt{n})$, $s=\omega(1)$, $d \leq (s-1)qe^{(s-1)q}$, and $deg(v) \geq 2sq$. Then 
	$$\E{C(v) \given deg(v)=r}= \frac{sq^2}{r} \left(1 +o_r(1) \right) $$
\end{theorem}

\begin{figure}
	\centering
	\includegraphics[width=\textwidth]{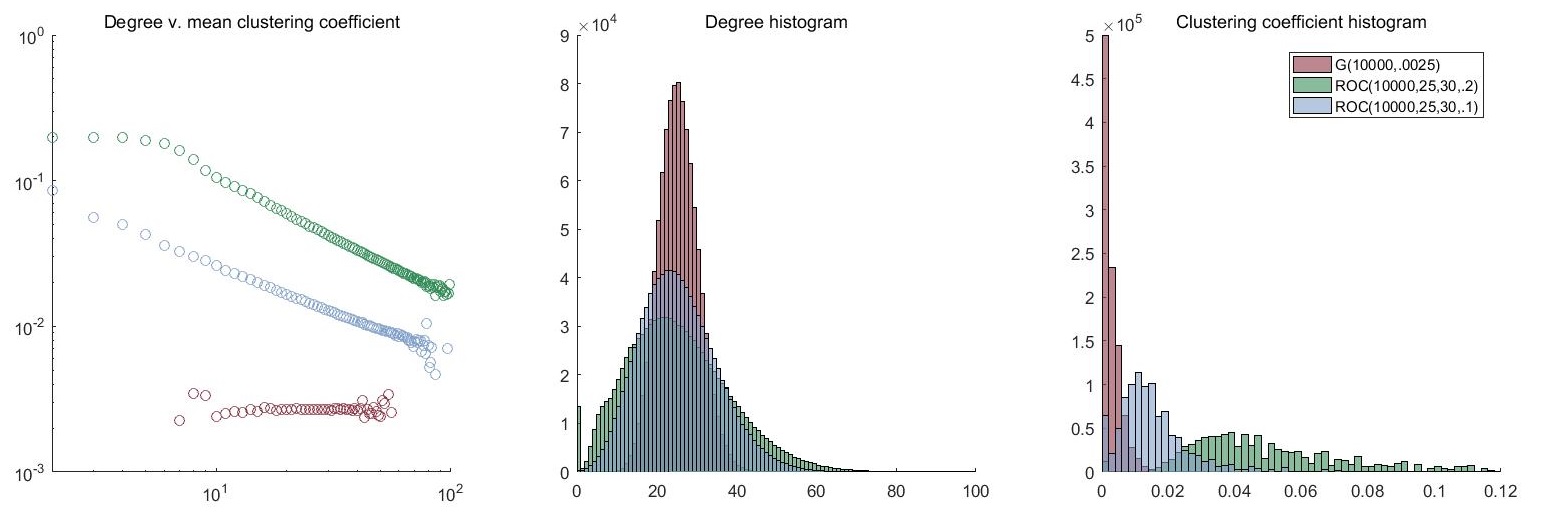}
	\caption{A comparison of the degree distributions and clustering coefficients of 100 \networks with average degree 25 drawn from each $G_{10000,0.0025}$, $ROC(10000,25,30,0.2)$, and $ROC(10000,25,30,0.1)$. The mean clustering coefficients are $0.00270$, $0.06266$, and $0.01595$ respectively.  \label{compare oc er}}
\end{figure}

\paragraph{Clustering coefficient proofs.}\label{ccproofs}

\begin{remark} \label{ignore}
	\Cref{bounds} gives bounds on the expected clustering coefficient up to factors of $(1 + o(1))$.  The clustering coefficient at a vertex is only well-defined if the vertex has degree at least two. Given the assumption in  \Cref{bounds} that $d= \omega(sq \log{\frac{nd}{s}})$, $d= \Omega(s)$, $d< sq e^{sq}$, and $s=\omega(1)$, \Cref{no d1} implies that the fraction of vertices of degree strictly less than two is $o(1)$. Therefore we ignore the contribution of these terms throughout the computations for \Cref{bounds} and supporting \Cref{exact}. In addition we divide by $deg(v)^2$ rather than by $deg(v)(deg(v)-1)$ in the computation of the clustering coefficient since this modification only affects the computations up to a factor of $(1 + o(1))$.
\end{remark}

\begin{lemma}\label{no d1} If $d= \omega( sq\log{\frac{nd}{s}})$, $d= \Omega(s)$, $s=\omega(1)$, $s=o(n)$, and $d<sq e^{sq}$, then a \network from $ROC(n,d,s,q)$ a.a.s. has no vertices of degree less than 2. \end{lemma}

\begin{proof}  
	The assumptions on $d$ imply the hypotheses of \Cref{no isolated vertices} hold. Thus, there are no isolated vertices a.a.s.  
	We begin by computing the probability a vertex has degree one. 
	\begin{align*} \Pr{deg(v)=1 }&=  \sum_{i=1}^{\frac{nd}{s^2q}} \Pr{v \text{ is in $i$ communities} } q (1-q)^{si-1}\\
	&=  \sum_{i=1}^{\frac{nd}{s^2q}} { \frac{nd}{s^2q} \choose i} \bfrac{s}{n}^i \brac{1-\frac{s}{n}}^{\frac{nd}{s^2q} -i} q (1-q)^{si-1}\\
	&\leq \lo \sum_{i=1}^{\frac{nd}{s^2q}}  \bfrac{nd}{s^2q}^ i \bfrac{s}{n}^i e^{-\frac{d}{sq}+\frac{si}{n}} q e^{-qsi+q}  \\
	&= \lo q e^{-\frac{d}{sq}} \sum_{i=1}^{\frac{nd}{s^2q}}  \bfrac{de^{-sq}}{sq}^ i \\
	&=O\brac{ \frac{de^{-sq-\frac{d}{sq}}}{s}}
	\end{align*}
	Let $X$ be a random variable that represents the number of degree one vertices of a \network drawn from $ROC(n,d,s,q)$. When $d= \omega( sq\log{\frac{nd}{s}})$, we obtain $$\Pr{X>0} \leq \E{X} = O\brac{ \frac{nde^{-sq-\frac{d}{sq}}}{s}}=o(1).$$
\end{proof}

\begin{lemma} \label{exact}
	Let $C(v)$ denote the clustering coefficient of a vertex $v$ of degree at least 2 in a \network drawn from $ROC(n,d,s,q)$ with $d=o( \sqrt{n})$ and $d= \omega(sq \log{\frac{nd}{s}})$. Then $$\E{C(v)}=\left(1 + o(1) \right) \left( \sum_{i=1}^{\frac{nd}{s^2q}} { \frac{nd}{s^2q} \choose i} \left( \frac{s}{n}\right)^i \left(1- \frac{s}{n}\right)^{\frac{nd}{s^2q}-i}\frac{s(s-1)q^3k }{\left( sqk + 2-2q\right)^2} \right). $$ \label{cc}  
\end{lemma}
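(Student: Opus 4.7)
The plan is to condition on the number $i$ of communities containing $v$ and apply the law of total expectation. The ROC process generates approximately $M = nd/(s(s-1)q) \approx nd/(s^2q)$ communities to reach expected degree $d$, and each vertex joins each community independently with probability $s/n$; hence the number of communities containing $v$ is distributed as $\mathrm{Bin}(M, s/n)$, which explains the binomial weights appearing in the claimed sum. So the task reduces to showing, uniformly over the relevant range of $i$,
\[
\E{C(v) \given v \in i \text{ communities}} = (1+o(1))\,\frac{s(s-1)q^3 \, i}{(sq\,i + 2 - 2q)^2}.
\]

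Next, conditional on $v$ lying in specific communities $C_1,\dots,C_i$, I would decompose $T_v \approx \sum_j T_v^{(j)}$ and $d_v \approx \sum_j Y_j$, where $T_v^{(j)}$ counts triangles through $v$ lying entirely in $C_j$ and $Y_j$ is the in-community degree of $v$ in $C_j$. The first step here is to show that cross-community contributions are negligible: under $d = o(\sqrt{n})$, the expected number of pairs of communities $(C_j,C_{j'})$ sharing a second common vertex is $O(\binom{i}{2}s^2/n)$, which is $o(1)$ in the relevant regime (since $i \le nd/(s^2q)$ and $d = o(\sqrt n)$). This justifies treating the $T_v^{(j)}$'s and $Y_j$'s as essentially independent. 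Given $v\in C_j$, the other members of $C_j$ are distributed as $\mathrm{Bin}(n-1,s/n)$ and edges within $C_j$ are independent Bernoulli$(q)$. A direct calculation gives
\[
\E{T_v^{(j)}} = \binom{n-1}{2}\!\brac{s/n}^2 q^3 = (1+o(1))\,\tfrac{s(s-1)q^3}{2},
\]
and expanding
\[
d_v(d_v-1) = \sum_{j} Y_j(Y_j-1) + \sum_{j\neq j'} Y_jY_{j'}
\]
together with $\E{Y_j(Y_j-1)} = (1+o(1))\,s(s-1)q^2$ and $\E{Y_j} = (1+o(1))\,sq$ yields, after collecting terms, $\E{d_v(d_v-1) \given i} = (1+o(1))(sq\,i + 2 - 2q)^2$. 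The additive correction $2-2q$ arises from reconciling $\E{Y_j(Y_j-1)} = s(s-1)q^2$ against the ``naive'' $(\E{Y_j})^2 = s^2 q^2$ across the $i$ community contributions.

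The main obstacle is passing from a ratio of expectations to the expectation of the ratio: $\E{T_v/d_v(d_v-1) \given i}$ is not literally $\E{T_v \given i}/\E{d_v(d_v-1) \given i}$. To close this gap I would invoke concentration. The hypothesis $d = \omega(sq\log(nd/s))$ ensures that for any $i$ of typical order, $isq$ is large enough that Chernoff bounds give $Y_j$-sums concentrated to within a $(1\pm o(1))$ factor of their means, and similarly for $T_v$ (using second-moment / Chebyshev once the triangle counts per community are of sufficient order, or handling the small-triangle regime separately by a direct variance estimate). Combining these concentration bounds with the remark that $d_v(d_v-1)$ may be replaced by $d_v^2$ up to $(1+o(1))$ then yields $\E{C(v) \given i} = (1+o(1))\,\frac{s(s-1)q^3 i}{(sqi + 2 - 2q)^2}$.

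Finally, I would argue that atypical values of $i$ make negligible contribution to the outer sum. The random variable $i \sim \mathrm{Bin}(M, s/n)$ has mean $d/(sq)$; by Chernoff, $i$ concentrates in a $(1\pm o(1))$ window around this mean, and by Lemma~\ref{no d1} the vertices of degree less than $2$ (which are irrelevant to the clustering coefficient) have vanishing density under the stated assumptions. So the $(1+o(1))$ approximation on each summand lifts to a $(1+o(1))$ approximation of the full sum, giving the claimed identity.
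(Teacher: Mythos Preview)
Your outer structure—condition on the number of communities containing $v$ and weight by $\mathrm{Bin}\bigl(\frac{nd}{s^2q},\frac{s}{n}\bigr)$—matches the paper exactly. The divergence is in how you compute $\E{C(v)\mid v\text{ in }k\text{ communities}}$.

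The paper does \emph{not} pass through a ratio of expectations plus a concentration argument. Writing $X_1,\dots,X_k\sim\mathrm{Bin}(s,q)$ for the in-community degrees of $v$, it computes the expectation of the ratio directly:
\[
\E{C(v)\mid k}\;=\;\E{\frac{q\sum_j X_j(X_j-1)}{\bigl(\sum_j X_j\bigr)^2}}\;=\;qk\,\E{\frac{X_1(X_1-1)}{\bigl(sq(k-1)+X_1\bigr)^2}},
\]
then decomposes $X_1=\sum_{a=1}^s y_a$ with $y_a\sim\mathrm{Bernoulli}(q)$ and, for each monomial $y_ay_b$, replaces the remaining $y$'s by their means. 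The denominator $(sqk+2-2q)^2$ is literally $\bigl(sq(k-1)+(s-2)q+2\bigr)^2$, i.e.\ the square of the conditional mean of $\sum_j X_j$ given $y_1=y_2=1$. It arises from this \emph{size-biasing} step, not from any second-moment bookkeeping.

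Your claim that ``collecting terms'' in $\E{d_v(d_v-1)\mid i}$ yields $(sqi+2-2q)^2$ is wrong. A direct computation gives
\[
\E{d_v(d_v-1)\mid i}\;=\;i\,s(s-1)q^2+i(i-1)s^2q^2\;=\;isq^2(is-1)\;=\;(sqi)^2-isq^2,
\]
whereas $(sqi+2-2q)^2=(sqi)^2+4sqi(1-q)+4(1-q)^2$. These agree to leading order in $sqi$ but differ at the next order, and your stated mechanism for the $2-2q$ correction (reconciling $\E{Y_j(Y_j-1)}$ against $(\E{Y_j})^2$) does not produce it. Since the lemma is only asserted up to an overall $(1+o(1))$, your ratio-of-expectations-plus-concentration route could in principle be completed—your expression and the paper's are asymptotically equivalent once $sqi\to\infty$—but as written you have misidentified where the stated denominator comes from, and your computation would land on a different (equivalent) formula rather than the one in the lemma.
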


\begin{proof} 
	For ease of notation, we ignore factors of $(1+ o(1))$ throughout as described in \Cref{ignore}.
	First we compute the expected clustering coefficient of a vertex from an $ROC(n,d,s,q)$ \network given $v$ is contained in precisely $k$ communities. Let $X_1, \dots X_k$ be random variables representing the degree of $v$ in each of the communities, $X_i \sim Bin(s,q)$. We have \begin{align} \E{C(v)| \text{ $v$ in $k$ communities }}&=\E{ \frac{\sum_{i=1}^kX_i(X_i-1) q}{\left( \sum_{i=1}^k X_i\right)^2}} \label{given k} \\
	&= qk \E{ \frac{X_1(X_1-1) }{\left( sq(k-1) + X_1\right)^2}} \nonumber \\
	&= qk \E{ \frac{X_1^2 }{\left( sq(k-1) + X_1\right)^2}}- qk \E{ \frac{X_1 }{\left( sq(k-1) + X_1\right)^2}} \nonumber.
	\end{align}
	Write $X_1= \sum_{i=1}^s y_i$ where $y_i \sim Bernoulli(q)$. Using linearity of expectation and the independence of the $y_i's$ we have
	\begin{align*} \E{ \frac{X_1 }{\left( sq(k-1) + X_1\right)^2}}&= s \E{ \frac{y_1 }{\left( sq(k-1) + (s-1)q+y_1\right)^2}}
	&= \frac{sq }{\left( sq(k-1) + (s-1)q+1\right)^2},\\
	\end{align*}
	and 
	\begin{align*} \E{ \frac{X_1^2 }{\left( sq(k-1) + X_1\right)^2}}&= \E{ \frac{\left(\sum_{i=1}^s y_i\right)^2 }{\left( sq(k-1) +\sum_{i=1}^s y_i\right)^2}}\\
	&= s\E{ \frac{ y_1^2 }{\left( sq(k-1) +q(s-1) +y_1\right)^2}}\\
	&\quad +s(s-1)\E{ \frac{\left(y_1y_2\right)^2 }{\left( sq(k-1) + (s-2)q+y_1+y_2\right)^2}}\\
	&= \frac{ sq }{\left( sq(k-1) +q(s-1) +1\right)^2}+\frac{s(s-1)q^2 }{\left( sq(k-1) + (s-2)q+2\right)^2}.
	\end{align*}
	Substituting in these values into \Cref{given k}, we obtain
	\begin{equation}
	\E{C(v)|v \in k \text{ communities }}=qk\left(\frac{s(s-1)q^2 }{\left( sq(k-1) + (s-2)q+2\right)^2}\right)=\frac{s(s-1)q^3k }{\left( sqk + 2-2q\right)^2}.\label{conditional}
	\end{equation}
	
	Let $M$ be the number of communities a vertex is in, so $M  \sim Bin\left(\frac{nd}{s^2q},\frac{s}{n}\right).$ It follows 
	\begin{align*}
	\E{C(v)}&=\sum_{i=1}^{\frac{nd}{s^2q}}\Pr{\text{ $v$ in $k$ communities }} \E{C(v)|\text{ $v$ in $k$ communities }}\\
	&=\sum_{i=1}^{\frac{nd}{s^2q}} { \frac{nd}{s^2q} \choose i} \left( \frac{s}{n}\right)^i \left(1- \frac{s}{n}\right)^{\frac{nd}{s^2q}-i}\frac{s(s-1)q^3k }{\left( sqk + 2-2q\right)^2}.\\
	\end{align*}
\end{proof}

The proof of \Cref{bounds}, relies on the follow two lemmas
regarding expectation of binomial random variables. 

\begin{lemma} \label{ex of rec p1} 
	Let $X \sim Bin(n,p)$. Then 
	\begin{enumerate}
		\item $\E{ \frac{1}{X+1} \given X \geq 1 }= \frac{1- \left( 1-p\right)^{n+1}-(n+1)p(1-p)^n}{p(n+1)}$ and
		\item $\E{ \frac{1}{X+1}}= \frac{1- \left( 1-p\right)^{n+1}}{p(n+1)}$.
	\end{enumerate}
\end{lemma}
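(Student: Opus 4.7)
The plan is to prove part (2) first by the standard binomial identity trick, then derive part (1) as an immediate corollary by separating out the $X=0$ term.

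For part (2), I would start from the definition and use the key combinatorial identity $\frac{1}{k+1}\binom{n}{k} = \frac{1}{n+1}\binom{n+1}{k+1}$:
\begin{align*}
\E{\frac{1}{X+1}} &= \sum_{k=0}^{n} \frac{1}{k+1}\binom{n}{k} p^k (1-p)^{n-k} \\
&= \frac{1}{p(n+1)} \sum_{k=0}^{n} \binom{n+1}{k+1} p^{k+1} (1-p)^{(n+1)-(k+1)}.
\end{align*}
Then re-index with $j=k+1$ so the sum runs from $j=1$ to $n+1$, and complete it to a full binomial expansion by adding and subtracting the $j=0$ term $(1-p)^{n+1}$. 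The sum then equals $1 - (1-p)^{n+1}$, yielding the claimed formula.

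For part (1), observe that the expression given in the statement is actually $\E{\frac{1}{X+1}\,\mathbf{1}[X\geq 1]}$ rather than the conditional expectation (the conditional version would require dividing by $1-(1-p)^n$). Accepting the natural reading that matches the algebraic formula, I would write
\begin{equation*}
\E{\frac{1}{X+1}\,\mathbf{1}[X\geq 1]} = \E{\frac{1}{X+1}} - \Pr{X=0}\cdot\frac{1}{0+1} = \frac{1-(1-p)^{n+1}}{p(n+1)} - (1-p)^{n},
\end{equation*}
and then combine over the common denominator $p(n+1)$ to obtain the stated expression $\frac{1-(1-p)^{n+1}-(n+1)p(1-p)^{n}}{p(n+1)}$.

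There is no real obstacle: the argument is a routine application of the $\binom{n}{k}/(k+1) = \binom{n+1}{k+1}/(n+1)$ identity. The only subtlety worth flagging is the mismatch in part (1) between the conditional notation $\given X\geq 1$ and the formula actually produced, which corresponds to the unnormalized expectation restricted to $\{X\geq 1\}$; this is the natural quantity that appears in the clustering-coefficient computations of \Cref{exact}, so it is almost certainly the intended meaning.
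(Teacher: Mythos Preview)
Your proof is correct and takes essentially the same approach as the paper: both rely on the identity $\frac{1}{k+1}\binom{n}{k}=\frac{1}{n+1}\binom{n+1}{k+1}$ and complete the resulting sum to a full binomial expansion. The paper computes (1) and (2) separately (starting the sum at $i=1$ and $i=0$ respectively) rather than deriving (1) from (2) by subtraction, but the content is identical; your observation that the stated formula in (1) is really $\E{\tfrac{1}{X+1}\mathbf{1}[X\ge 1]}$ rather than a normalized conditional expectation is accurate and matches how the paper uses it.
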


\begin{proof} 
	Observe 
	\begin{align*} 
	\E{ \frac{1}{X+1} \given X\geq 1} &= \sum_{i=1}^n {n \choose i}  \frac{p^i (1-p)^{n-i}}{i+1}\\
	&= \frac{1}{p(n+1)} \sum_{i=1}^n { n+1 \choose i+1} p^{i+1} (1-p)^{n-i}\\
	&= \frac{1- \left( 1-p\right)^{n+1}-(n+1)p(1-p)^n}{p(n+1)}.
	\end{align*}
	Similarly 
	\begin{align*} 
	\E{ \frac{1}{X+1}} &= \sum_{i=0}^n {n \choose i}  \frac{p^i (1-p)^{n-i}}{i+1}
	= \frac{1}{p(n+1)} \sum_{i=0}^n { n+1 \choose i+1} p^{i+1} (1-p)^{n-i}
	= \frac{1- \left( 1-p\right)^{n+1}}{p(n+1)}.
	\end{align*}
\end{proof}

\begin{lemma}\label{ex of rec}
	Let $X \sim Bin(n,p)$. Then $$\E{ \frac{1}{X} \given X \geq 1} \leq \frac{1}{p(n+1)}\left(1 +\frac{3}{p(n+2)} \right).$$
\end{lemma}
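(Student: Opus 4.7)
The plan is to reduce the computation of $\E{1/X \mid X \geq 1}$ to the quantities already handled in the preceding \Cref{ex of rec p1}, by writing $1/X$ as $1/(X+1)$ plus a correction term which is a small multiple of $1/((X+1)(X+2))$, and then using the binomial shift trick that made the previous lemma work.

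First I would use the partial-fraction identity $\frac{1}{i} = \frac{1}{i+1} + \frac{1}{i(i+1)}$, valid for $i \geq 1$, to split the sum defining $\E{1/X \mid X \geq 1}$ into two pieces. The first piece is exactly $\E{1/(X+1) \mid X \geq 1}$, which by \Cref{ex of rec p1}(1) equals $\frac{1-(1-p)^{n+1}-(n+1)p(1-p)^n}{p(n+1)}$ and is thus $\leq \frac{1}{p(n+1)}$.

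For the second piece, I would observe that for $i\geq 1$
\[
\frac{1}{i(i+1)} = \frac{i+2}{i}\cdot\frac{1}{(i+1)(i+2)} \leq \frac{3}{(i+1)(i+2)},
\]
since $(i+2)/i \leq 3$ when $i \geq 1$. This reduces the problem to estimating $\sum_{i=1}^n \binom{n}{i} p^i (1-p)^{n-i}/((i+1)(i+2))$. I would bound this by extending the sum to include $i=0$ and then applying the same shift identity used in the previous lemma, but now shifted by two: $\binom{n}{i}/((i+1)(i+2)) = \binom{n+2}{i+2}/((n+1)(n+2))$. This gives
\[
\sum_{i=0}^n \binom{n}{i} \frac{p^i(1-p)^{n-i}}{(i+1)(i+2)} = \frac{1}{p^2(n+1)(n+2)}\bigl[1 - (1-p)^{n+2} - (n+2)p(1-p)^{n+1}\bigr] \leq \frac{1}{p^2(n+1)(n+2)}.
\]

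Combining the two bounds yields $\E{1/X \mid X\geq 1} \leq \frac{1}{p(n+1)} + \frac{3}{p^2(n+1)(n+2)} = \frac{1}{p(n+1)}\left(1 + \frac{3}{p(n+2)}\right)$, as desired. There is no genuine obstacle here; the only subtle point is choosing the right partial fraction decomposition so that the residual factor $\tfrac{1}{i(i+1)}$ can be cleanly compared to $\tfrac{1}{(i+1)(i+2)}$, which is what allows the binomial-shift identity to close the calculation without needing the exact conditional distribution of $X$.
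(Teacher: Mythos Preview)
Your proof is correct and essentially identical to the paper's: both establish the pointwise inequality $\frac{1}{X} \le \frac{1}{X+1} + \frac{3}{(X+1)(X+2)}$ for $X \ge 1$ (you derive it via the partial fraction $\frac{1}{i(i+1)} = \frac{i+2}{i}\cdot\frac{1}{(i+1)(i+2)} \le \frac{3}{(i+1)(i+2)}$, while the paper simply states it), and then bound each piece using \Cref{ex of rec p1} and the two-step binomial shift identity $\binom{n}{i}/((i+1)(i+2)) = \binom{n+2}{i+2}/((n+1)(n+2))$.
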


\begin{proof} Note that when $X \geq 1$, \begin{equation} \label{true} \frac{1}{X} \leq \frac{1}{X+1}+\frac{3}{(X+1)(X+2)}.\end{equation} By Lemma \ref{ex of rec p1}, 
	\begin{equation*} \E{\frac{1}{X+1}\given X \geq 1} \leq \frac{1}{p(n+1)}.
	\end{equation*}
	We compute 
	\begin{align*}
	\E{\frac{1}{(X+1)(X+2)}\given X\geq 1}&= \sum_{i=1}^n \frac{{n \choose i} p^i (1-p)^{n-i}}{(i+1)(i+2)}\\
	&= \frac{1}{p^2 (n+2)(n+1)} \sum_{i=1}^n {n +2 \choose i+2} p^{i+2}(1-p)^{n-i}\\
	&\leq \frac{1}{p^2 (n+2)(n+1)}.
	\end{align*}
	Taking expectation of \Cref{true} gives $$\E{\frac{1}{X}\given X\geq 1} \leq \frac{1}{p(n+1)}\left(1 +\frac{3}{p(n+2)} \right).$$

\end{proof}

\begin{proof} (of \Cref{bounds}.) For ease of notation, we ignore factors of $(1 + o(1))$, as described in \Cref{ignore}.
	It follows from \Cref{conditional} in the proof of \Cref{exact} that
	$$\frac{q}{k+1}\leq \E{C(v)|v \in k \text{ communities }}\leq \frac{q}{k}, $$ where the left inequality holds when $q(s-1)\geq 5$.
	
	We now compute upper and lower bounds on $\E{C(v)}$, assuming $v$ is in some community. Let $M$ be the random variable indicating the number of communities containing $v$, $M \sim Bin\left(\frac{nd}{s(s-1) q}, \frac{s}{n}\right)$. It follows
	\begin{align*}\E{C(v)}=\sum_{k=1}^{\frac{nd}{s^2q}}\Pr{M=k }\E{C(v)|M=k} \end{align*}
	\begin{align*}q\E{\frac{1}{M+1} \given M \geq 1} \leq \E{C(v)}\leq  q\E{\frac{1}{M}\given M \geq 1}. \end{align*}
	Applying Lemmas \ref{ex of rec p1} and \ref{ex of rec} to the lower and upper bounds respectively, we obtain 
	$$\frac{q\left(1-\left(1-\frac{s}{n}\right)^{\frac{nd}{s(s-1)q}+1}- \brac{\frac{nd}{s(s-1)q}+1}\left(1-\frac{s}{n}\right)^{\frac{nd}{s(s-1)q}}\right)}{\frac{d}{(s-1)q}+\frac{s}{n}}\leq\E{C(v)}\leq \frac{q}{\frac{d}{(s-1)q} +\frac{s}{n}} \left( 1 + \frac{3}{\frac{d}{(s-1)q} +\frac{2s}{n}} \right) $$
	which for $s=o(n)$ simplifies to
	\begin{equation} \label{more general cc}
	\lo \frac{(s-1)q^2}{d} \left( 1-\frac{nd}{s (s-1) q}e^{-d/((s-1)q)} \right) \leq \E{C(v) } \leq  \frac{(s-1)q^2}{d} \left( 1+ \frac{(s-1)q}{d}\right) \lo.
	\end{equation}
	Under the assumptions $s^2q =\omega(1)$ and $sq = o(d)$, we obtain our desired result $$\E{C(v)}=\left(1+o(1) \right)\left( \frac{sq^2}{d}\right).$$
\end{proof}

The following lemma will be used in the proof of \Cref{degree cc}.

\begin{lemma} \label{helper} Let $a \ge 1, r \ge a+1$.
	Let $X$ be a nonnegative integer drawn from the discrete distribution with probability proportional to $f(x)=x^{r-x}e^{-ax}$. Let $z=\argmax f(x)$. Then for $t\geq 1$,  \[
	\Pr{|x-z| \ge t\sqrt{z}} \le e^{-t+1}.
	\]
\end{lemma}

\begin{proof}
	We consider $f$ as a function over the positive reals (rather than over the positive integers) and observe that $f$ is logconcave:
	\[
	\frac{d^2}{dx^2}\ln f(x) = \frac{d}{dx}(-a + \frac{r}{x}-1 - \ln x) = -\frac{r}{x^2}-\frac{1}{x},
	\]
	which is nonpositive for all $x > 0$. We will next bound the standard deviation of this density, so that we can use an exponential tail bound for logconcave densities.
	 Setting its derivative to zero, we see that at the maximum, we have 
	\begin{equation}\label{eq:max-cond}
	a+1= \frac{r}{x}-\ln x.
	\end{equation}  
In other words the maximum $z$ satisfies: $(a+1)z+z\ln z = r$.
Next we claim that $z \le r/(a+1)$. To see this, note that at $x = r/(a+1)$ we have $(a+1)x + x\ln x \ge r$ by the assumption that $r \ge a+1$; moreover, the derivative of $(a+1)x + x\ln x$ is positive for any $x \ge r/(a+1)$. 

	Observe  that when $\delta  \le r-z$,
	\begin{align*}
	\frac{f(z+\delta)}{f(z)} = 	\frac{(z+\delta)^{r-z-\delta}e^{-az-a\delta}}{z^{r-z}e^{-az}} 
	&=\left(1+\frac{\delta}{z}\right)^{r-z-\delta}z^{-\delta}e^{-a\delta}  \\
	&\leq e^{\delta(\frac{r}{z}-1-a-\ln z)}e^{-\frac{\delta^2}{z}}\\
	& = e^{-\frac{\delta^2}{z}},
	\end{align*}
	where in the final equality we used the optimality condition (\ref{eq:max-cond}). 
	
	By logconcavity (which says that for any $x,y$ and any $\lambda \in [0,1]$, we have $f(\lambda x +(1-\lambda)y)\ge f(x)^\lambda f(y)^{1-\lambda}$) we have 
	$$f(x+ \delta)= f\brac{ \brac{ 1-\frac{1}{t} } x + \frac{1}{t} ( x +t\delta) } \geq f(x)^{1-1/t} f(x+t\delta)^{1/t}$$
	for any $t \geq 1$. 
	It follows that
	\begin{equation} \label{f(x)}
	f(z+t\delta) \le f(z)e^{\frac{-t\delta^2}{z}}
	\end{equation}
	for all $|t|\geq 1$ (since we can apply the same argument for $z-\delta$). 
Taking $\delta =  ar/(a+1) \leq r -z$, we obtain
\[
e^{-t\frac{\delta^2}{z}} = e^{-t\frac{(ar)^2}{(a+1)^2 z}} \le e^{-t\frac{a^2r}{a+1}} \le e^{-t\frac{r}{2}}. 
\]
	
	Using  the observation $\sum_{x \in \Z^{+} } f(x) \geq f(z)$,  it follows that $$\Pr{ x \geq z+ t \sqrt{z}}\leq  e^{-t} \quad \text{ and } \quad \Pr{ x \leq  z- t \sqrt{z}}\leq e^{-t},$$  and so $$\Pr{|x-z| \ge t\sqrt{z}} \le 2 e^{-t}\le e^{-t+1}.$$

\end{proof}

\begin{proof} (of \Cref{degree cc}). 
	Let $M$ denote the number of communities a vertex $v$ is selected to participate in. We can write
	\begin{align*}
	\E{C(v)|deg(v)=r} &= \sum_{k=\frac{r}{s}}^r \E{C(v)|deg(v)=r, M=k}\Pr{M=k|deg(v)=r}\\
	&=\sum_{k=\frac{r}{s}}^r \E{C(v)|deg(v)=r, M=k}\Pr{deg(v)=r|M=k}\frac{\Pr{M=k}}{\Pr{deg(v)=r}}.
	\end{align*}
	First we compute the expected clustering coefficient of a degree $r$ vertex given that it is $k$ communities: 
	\begin{align*}
	\E{C(v)|deg(v)=r \text{ and  }M=k}=\frac{ \sum_{i \not= j, i,j \in N(v)} q\left(\Pr{i, j \text{ part of same community} } \right)}{deg(v)\left(deg(v)-1\right)} =\frac{q}{k}.
	\end{align*}
	Next we note that $M$ is a drawn from a binomial distribution, and the degree of $v$ is drawn from a sum of $k$ binomials, each being $Bin(s,q)$.	Therefore,
	\begin{align*}
	\Pr{M=k}\Pr{deg(v)=r|M=k} &= {\frac{nd}{s(s-1)q} \choose k} \left(\frac{s}{n}\right)^k\left(1-\frac{s}{n}\right)^{\frac{nd}{s(s-1)q}-k} {sk \choose r} q^r(1-q)^{sk-r}.
	\end{align*}
	Using this we obtain
	\begin{align}
	\E{C(v)|deg(v)=r} &= \frac{\sum_{k=\frac{r}{s}}^r \frac{q}{k}\Pr{M=k}\Pr{deg(v)=r|M=k}}{\sum_{k=\frac{r}{s}}^r\Pr{M=k}\Pr{deg(v)=r|M=k}}\nonumber \\
	&=\lo q \frac{\sum_{k=\frac{r}{s}}^r \frac{1}{k}\cdot \left(\frac{d}{(s-1)qk}\right)^ke^{-\frac{d}{(s-1)q}+\frac{sk}{n}} \left(\frac{skq}{r}\right)^re^{-qsk+qr}}{\sum_{k=\frac{r}{s}}^r\left(\frac{d}{(s-1)qk}\right)^ke^{-\frac{d}{(s-1)q}+\frac{sk}{n}} \left(\frac{skq}{r}\right)^re^{-qsk+qr}}\nonumber\\
	&=\lo 	q \frac{\sum_{k=\frac{r}{s}}^r \frac{1}{k}\cdot \left(\frac{d}{(s-1)q}\right)^k k^{r-k}e^{-qsk}}{\sum_{k=\frac{r}{s}}^r
		\left(\frac{d}{(s-1)q}\right)^k k^{r-k}e^{-qsk}}. \label{expression}
	\end{align}		
	Writing $a=qs-\ln(d/(s-1)q)$, this is 
	\[
	q \frac{\sum_{k=\frac{r}{s}}^r  \frac{1}{k}\cdot k^{r-k}e^{-ak}}{\sum_{k=\frac{r}{s}}^r k^{r-k}e^{-ak}}.
	\]
	Therefore \Cref{expression} is the same as $q\E{1/x}$ when $x$ is a nonnegative integer drawn from the discrete distribution with density proportional to $f(x)=x^{r-x}e^{-ax}$. 
	We approximate the argmax of $f$ with $z \approx \frac{r}{sq}$ and
	use \Cref{helper} 
	to bound
	\begin{align*}
	\E{\left| \frac{1}{x}-\frac{1}{z} \right|} 
	&\leq  \sum_{t=1}^\infty  \left(\frac{1}{z}-\frac{1}{z+ t \sqrt{z} }\right)e^{-t} +\sum_{t=1}^{\sqrt{z}-1} \left(\frac{1}{z- t \sqrt{z} }- \frac{1}{z}\right)e^{-t}  \\
	&=  \sum_{t=1}^\infty \frac{t\sqrt{z}e^{-t}}{z(z+t\sqrt{z})}+\sum_{t=1}^{\sqrt{z}-1} \frac{t\sqrt{z}e^{-t}}{z(z-t\sqrt{z})}\\
	&\leq  \frac{1}{z} \sum_{t=1}^\infty \frac{te^{-t}}{\sqrt{z}+1}+\frac{\sqrt{z}}{z} \brac{ \sum_{t=1}^{\sqrt{z}/3} \frac{3te^{-t}}{2z} +\sum_{t=\sqrt{z}/3}^{\sqrt{z}-1} te^{-t}  }\\
	&=  \frac{O(1)}{z\sqrt{z} } + \frac{O(1)}{ z\sqrt{z}} + O\brac{ \frac{\sqrt{z}}{3} e^{-\frac{\sqrt{z}}{3} } } = \frac{O(1)}{z \sqrt{z}}.
	\end{align*}
	Using this and approximating $z$ by $\frac{r}{sq}$, the expectation of $x$ with respect to the density proportional to $f$ can be estimated:
	\[
	q \E{\frac{1}{x}}=\frac{q}{z}\left(1+O\left(\frac{1}{\sqrt{z}}\right)\right)=\lo\frac{sq^2}{r}\left(1+O\left(\sqrt{\frac{sq}{r}}\right)\right)=(1+o_r(1))\frac{sq^2}{r}
	\]
	as claimed.

	
\end{proof}

\subsection{Varied degree distributions: the DROC extension}\label{sec:droc}

In this section we introduce an extension of our model which produces \networks that match a target degree distribution in expectation. In each iteration a modified Chung-Lu random \network is placed instead of an E-R random \network.  \bigskip

\begin{centering}
	\fbox{\parbox{0.95\textwidth}{
			{\bf DROC($n,D,s,q$)}.  
			
			\textit{Input: } number of vertices $n$, target degree sequence $D= t(v_1), \dots t(v_n)$ with mean $d$.
			
			\textit{Output: }a \network on $n$ vertices where vertex $v_i$ has expected degree $t(v_i)$.\\
			
			Repeat $n/((s-1)q)$ times:
			\begin{enumerate}
				\item Pick a random subset $S$ of vertices (from $\{1,2,\ldots,n\}$) by selecting each vertex with probability $s/n$.
				\item Add a modified C-L random \network on $S$, i.e., for each pair in $S$, add the edge between them independently with probability 
				$ \frac{qt(v_i) t(v_j)}{sd} $; if the edge already exists, do nothing.   
			\end{enumerate}
		}
	}
\end{centering}

\begin{theorem}\label{E1} Given a degree distribution $D$ with mean $d$ and $\max_i t(v_i)^2 \leq \frac{sd}{q}$,  DROC($n,D,s,q$) yields a \network where vertex $v_i$ has expected degree $t(v_i)$. \end{theorem}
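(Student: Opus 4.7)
The plan is to compute $\E{\deg(v_i)}$ directly via linearity of expectation and show it matches $t(v_i)$ up to lower-order corrections. Fix $j \neq i$ and analyze a single iteration of DROC. Since the vertex-selection coins are independent, both $v_i$ and $v_j$ land in $S$ with probability $(s/n)^2$, and conditional on this the edge is inserted with probability $qt(v_i)t(v_j)/(sd)$. The hypothesis $\max_k t(v_k)^2 \leq sd/q$ together with $t(v_i)t(v_j) \leq \max_k t(v_k)^2$ guarantees this Chung--Lu factor lies in $[0,1]$, so the model is well-defined. Thus the per-iteration probability that edge $(v_i,v_j)$ is placed is
\[
p_{ij} \;:=\; \frac{s^2}{n^2}\cdot\frac{q\,t(v_i)t(v_j)}{sd}.
\]

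Next, since the $T := n/((s-1)q)$ iterations are independent, the probability that the edge is present in the final graph is $1-(1-p_{ij})^T$. I would use the sandwich $Tp_{ij} - \binom{T}{2}p_{ij}^2 \leq 1-(1-p_{ij})^T \leq Tp_{ij}$ to split
\[
\E{\deg(v_i)} \;=\; \sum_{j \neq i}\bigl(1-(1-p_{ij})^T\bigr) \;=\; \sum_{j \neq i} Tp_{ij} \;-\; E_i, \qquad 0 \le E_i \le \tbinom{T}{2}\sum_{j\neq i} p_{ij}^2.
\]
The main term is straightforward: $Tp_{ij} = s\,t(v_i)t(v_j)/((s-1)nd)$, and invoking $\sum_j t(v_j)=nd$ yields
\[
\sum_{j\neq i} T p_{ij} \;=\; \frac{s}{s-1}\,t(v_i)\left(1-\frac{t(v_i)}{nd}\right),
\]
which equals $t(v_i)$ up to a $1+O(1/s)$ factor and a $1-o(1)$ factor from $t(v_i)/(nd)$ (the hypothesis caps $t(v_i)$ by $\sqrt{sd/q}$, making this correction vanish in the relevant regime).

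The step I expect to require the most care is controlling $E_i$, which accounts for the ``if the edge already exists, do nothing'' clause, i.e., multiple iterations attempting the same pair. The hypothesis gives $p_{ij}\leq s^2/n^2$, so
\[
\sum_{j\neq i} p_{ij}^2 \;\le\; \frac{s^2}{n^2}\sum_{j\neq i}p_{ij}, \qquad E_i \;\le\; \tfrac{T^2 s^2}{2 n^2}\sum_{j \neq i}p_{ij} \;=\; \tfrac{1}{2(s-1)^2 q^2}\sum_{j \neq i} p_{ij}.
\]
Since $\sum_{j\neq i} p_{ij}$ is itself of order $t(v_i)/T = (s-1)q\,t(v_i)/n$, this shows $E_i$ is smaller than the main term by a factor $O(1/((s-1)nq))$, hence negligible whenever $(s-1)q \to \infty$. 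Combining the main-term estimate with this error bound yields $\E{\deg(v_i)} = (1+o(1))\,t(v_i)$, as asserted.
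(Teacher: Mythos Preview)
Your approach is correct and is the natural one; the paper in fact does not supply a proof of this theorem, treating it as an immediate consequence of the Chung--Lu-style construction. Your linearity-of-expectation computation over iterations, together with the union bound for the ``edge already exists'' correction, is exactly how one verifies it.

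Two small corrections. First, an arithmetic slip: $T^{2}s^{2}/(2n^{2}) = s^{2}/\bigl(2(s-1)^{2}q^{2}\bigr)$, not $1/\bigl(2(s-1)^{2}q^{2}\bigr)$. With the correct constant the relative error $E_i/t(v_i)$ is $O(s/(qn))$ rather than $O\!\bigl(1/((s-1)qn)\bigr)$; this is still negligible in the intended regime $s=o(n)$, so your conclusion stands, but the sufficient condition you state, ``$(s-1)q\to\infty$'', is not what the corrected bound gives.

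Second, as you note yourself, the main term is $\frac{s}{s-1}\,t(v_i)\bigl(1-t(v_i)/(nd)\bigr)$, which is only $(1+O(1/s))\,t(v_i)$ rather than exactly $t(v_i)$. This matches the paper's conventions: throughout, such degree statements are implicitly up to $1+o(1)$ factors (the basic $ROC(n,d,s,q)$ model carries the same $s/(s-1)$ artifact), so your final claim $\E{\deg(v_i)}=(1+o(1))\,t(v_i)$ is the correct reading of the theorem. It is worth saying explicitly that exact equality requires $s\to\infty$ or the asymptotic interpretation.
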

\noindent We require  $\max_i t(v_i)^2 \leq \frac{sd}{q}$ to ensure that the probability each edge is chosen is at most 1. In the DROC model the number of communities a vertex belongs to is independent of target degree $t(v)$. When $t(v) > \frac{sd}{q}$, if $v$ participates in the average number of communities and is connected to all vertices in each of its communities, it likely will not reach degree $t(v)$. Therefore when $s$ is low and $q$ is high, the DROC model is less able to capture degree distributions with long upper tails. Moreover, when $s$ is low and $q$ is high, there will be more isolated vertices in a DROC \network since the expected fraction of isolated vertices is at least $(1-s/n)^{n/(q(s-1))}$. In \Cref{ex cc} we show that when $s$ is low and $q$ is high the clustering coefficient is largest. In this regard the DROC model is somewhat limited; it may not be possible to achieve some very high clustering coefficients while simultaneously capturing the upper tail of the degree distribution and avoiding isolated vertices.


The following corollary shows that it is possible to achieve a power law degree distribution with the DROC model for power law parameter $\gamma>2$. We use $\zeta(\gamma)=\sum_{n=1}^\infty n^{-\gamma}$ to denote the Riemann zeta function. 
\begin{corollary} \label{power law}
	Let $D\sim \mathcal{D_\gamma}$ be the power law degree distribution defined as follows: $$ \Pr{ t(v_i)=k}= \frac{k^{-\gamma}}{\zeta(\gamma)},$$ for all $1 \leq i \leq n$. If $\gamma>2$ and 
	$$\frac{s}{q} =\omega(1)\frac{\zeta(\gamma)}{\zeta(\gamma-1)} n^{\frac{1}{\gamma-1}},$$ then with high probability $D$ satisfies the conditions of \Cref{E1}, and therefore can be used to produce a DROC \network. 
\end{corollary}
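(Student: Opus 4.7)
The plan is to verify that the single hypothesis of \Cref{E1}, namely $\max_i t(v_i)^2 \le sd/q$, holds with high probability when the degree sequence $D$ is drawn i.i.d.\ from $\mathcal{D}_\gamma$. Two quantities must be controlled separately and then combined on the intersection of the two high-probability events: the empirical mean $d = \frac{1}{n}\sum_i t(v_i)$ (which appears on the right-hand side) and the maximum $\max_i t(v_i)$ (which appears on the left-hand side).

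First I would compute the expectation $\mu := \E{t(v_i)} = \sum_{k \ge 1} k \cdot k^{-\gamma}/\zeta(\gamma) = \zeta(\gamma-1)/\zeta(\gamma)$, which is finite precisely because $\gamma > 2$ and is exactly the constant that appears in the hypothesis. For $\gamma > 3$ the second moment is finite and Chebyshev applied directly to the i.i.d.\ sum yields $d = (1+o(1))\mu$ with probability $1-o(1)$. In the regime $2 < \gamma \le 3$ the second moment diverges, so I would truncate each $t(v_i)$ at level $M_n := h(n)\,n^{1/(\gamma-1)}$ for a slowly growing $h(n) \to \infty$: the truncated variables have second moment $O(M_n^{3-\gamma})$, Chebyshev on the truncated sum gives concentration, the truncation bias $\sum_{k > M_n} k \cdot k^{-\gamma}/\zeta(\gamma) = O(M_n^{-(\gamma-2)}) = o(1)$ is negligible, and on the event from the next step where no $t(v_i)$ exceeds $M_n$ the truncated and un-truncated sums coincide.

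Second, the tail of $\mathcal{D}_\gamma$ is controlled by integral comparison: $\Pr{t(v_i) \ge k} = \sum_{j \ge k} j^{-\gamma}/\zeta(\gamma) = \Theta\bigl(k^{-(\gamma-1)}/((\gamma-1)\zeta(\gamma))\bigr)$. A union bound over the $n$ vertices then gives $\Pr{\max_i t(v_i) \ge M_n} = O\bigl(nM_n^{-(\gamma-1)}\bigr) = O(h(n)^{-(\gamma-1)}) = o(1)$ for the same cutoff $M_n$ used above, so the two arguments share a common truncation level. On the intersection of the two high-probability events we simultaneously have $d = (1+o(1))\mu$ and $\max_i t(v_i)^2 \le M_n^2$, and substituting the hypothesised lower bound on $s/q$ into $sd/q = (1+o(1))(s/q)\mu$ yields the required inequality.

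The main obstacle I anticipate is bookkeeping: aligning the truncation cutoff $M_n$ used for Chebyshev concentration with the cutoff used for the max-degree union bound, and verifying that the $\omega(1)$ slack in the hypothesis is large enough to dominate the polylogarithmic overhead $h(n)^2$ coming from the union bound (choosing $h(n) = \log n$, say, is comfortably enough). The genuinely delicate conceptual point is the infinite-variance regime $2 < \gamma \le 3$, where a direct Chebyshev argument fails and the above truncation is required; fortunately the natural truncation level is already dictated by the max-degree calculation, so the two halves of the argument share the same cutoff and fit together cleanly.
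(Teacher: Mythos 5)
Your overall strategy — integral comparison for the power-law tail, union bound over the $n$ vertices for the maximum, and a separate argument that the empirical mean $d$ concentrates near $\zeta(\gamma-1)/\zeta(\gamma)$ — is the same skeleton the paper uses, and your treatment of $d$ is actually more careful: the paper computes $\E{d}$ and moves on without arguing concentration, whereas you explicitly address the infinite-variance regime $2<\gamma\le 3$ via truncation. That part is a genuine improvement.

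However, the final arithmetic step does not close, and the error is not merely ``bookkeeping.'' With your cutoff $M_n = h(n)\,n^{1/(\gamma-1)}$ you obtain $\max_i t(v_i)^2 \le M_n^2 = h(n)^2\, n^{2/(\gamma-1)}$ on the good event, while the right-hand side is $sd/q = (1+o(1))\,\omega(1)\,n^{1/(\gamma-1)}$. The ratio you need to control is therefore $h(n)^2\, n^{1/(\gamma-1)}$, which is a polynomial factor in $n$, not the ``polylogarithmic overhead $h(n)^2$'' you describe. An $\omega(1)$ that merely tends to infinity cannot absorb $n^{1/(\gamma-1)}$, so choosing $h(n)=\log n$ is nowhere near ``comfortably enough.'' The union bound forces $M_n = \omega\bigl(n^{1/(\gamma-1)}\bigr)$ (this is tight: the maximum of $n$ i.i.d.\ draws from a power law with tail exponent $\gamma-1$ really is of order $n^{1/(\gamma-1)}$), so $\max_i t(v_i)^2$ is genuinely of order $n^{2/(\gamma-1)}$ and the hypothesis as stated with exponent $1/(\gamma-1)$ is too weak to yield $\max_i t(v_i)^2 \le sd/q$.

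For comparison, the paper's own proof chooses $t_0 = n^{2/(\gamma-1)}$, shows $\max_i t(v_i)\le t_0$ a.a.s., and then asserts $\max_i t(v_i)^2 \le n^{1/(\gamma-1)}$ — a non sequitur, since squaring gives $n^{4/(\gamma-1)}$. So the paper contains the same gap you fell into, just in a different guise. A correct version of the corollary would need the exponent in the hypothesis on $s/q$ to be at least $2/(\gamma-1)$ (with the $\omega(1)$ absorbing only a slowly-varying factor), and your proof, with the overhead bookkeeping fixed to $h(n)^2\,n^{1/(\gamma-1)}$ rather than $h(n)^2$, would then go through cleanly.
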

Taking the distribution $D_d$ with $t(v)=d$ for all $v$ in the DROC model does not yield $ROC(n,d,s,q)$. The model $DROC(n, D_d, s,q)$ is equivalent to $ROC(n,d,s, \frac{qd}{s})$.

By varying $s$ and $q$ we can control the clustering coefficient of a $DROC$ graph. 
\begin{theorem}\label{ex cc} Let $C(v)$ denote the clustering coefficient of a vertex $v$  in \network drawn from $DROC(n,D,s,q)$ with $\max t(v_i)^2 \leq \frac{sd}{q}$, $s= \omega(1)$, $s/n=o(q)$, and $t=t(v)$. Then  $$\E{C(v)} = \lo  \frac{\brac{\sum_{u \in V} t(u)^2}^2}{d^3n^2s} \brac{ (1- e^{-t})^2q^2 +c_t q^3},$$ where $c_t \in [0,6.2)$ is a constant depending on $t$.  \end{theorem}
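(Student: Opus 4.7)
The plan is to adapt the proof of \Cref{bounds} (compare \Cref{exact}) to the DROC setting by accounting for the degree-biased edge probabilities within each community. First, I would condition on $M$, the number of communities that contain $v$. Since $v$ joins each of the $n/((s-1)q)$ communities independently with probability $s/n$, we have $M \sim \mathrm{Bin}(n/((s-1)q), s/n)$, well-approximated by $\mathrm{Poisson}(1/q)$ under $s = \omega(1)$ and $s/n = o(q)$; conditional on $v \in C$, the remaining members of $C$ are uniform draws from the rest of the vertex set.

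Next, I would decompose the triangles through $v$ into those entirely contained in a single community (the dominant contribution) and those whose three edges span multiple communities (of lower order under $s/n = o(q)$, since the expected number of second shared communities of any pair vanishes). Letting $Y_C$ denote the number of neighbors of $v$ in a community $C \ni v$ and setting $D_v = \sum_{C \ni v} Y_C$, this reduces the problem to
\begin{equation*}
\E{C(v)} = (1+o(1))\,\tilde q\, \E{\frac{\sum_{C\ni v} Y_C(Y_C-1)}{D_v(D_v-1)}\,\mathbf{1}[D_v \ge 2]},
\end{equation*}
where $\tilde q$ is the expected probability of an edge between two random neighbors of $v$ in the same community. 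To compute $\tilde q$: given $v \in C$, a specific $u \in C \setminus \{v\}$ is a neighbor of $v$ with probability $qt(u)t/(sd)$, so the $t$-values of $v$'s neighbors in $C$ are size-biased with mean $\sum_u t(u)^2/(nd)$, and two independent size-biased draws give $\tilde q = q(\sum_u t(u)^2)^2/(s d^3 n^2)$.

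To finish, I would evaluate the compound-Poisson ratio $R(t,q) := \E{\sum_{C\ni v} Y_C(Y_C-1)/(D_v(D_v-1))\,\mathbf{1}[D_v\ge 2]}$. Approximating $Y_C \sim \mathrm{Poisson}(qt)$ and $M \sim \mathrm{Poisson}(1/q)$, the integral identity $\tfrac{1}{n(n-1)} = \int_0^1\!\int_0^y x^{n-2}\,dx\,dy$ for $n \ge 2$, together with the compound-Poisson generating function $\E{x^{D_v}} = e^{(e^{-qt(1-x)}-1)/q}$, lets one expand $R(t,q) = (1-e^{-t})^2\,q + c_t\,q^2 + O(q^3)$ with bounded $c_t$. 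Multiplying by $\tilde q$ yields the stated formula, and the bound $c_t < 6.2$ follows from uniform estimates on the Taylor remainder. The principal obstacle is this last step: delicate asymptotic analysis is required to extract both the $(1-e^{-t})^2$ leading coefficient and the bounded correction $c_t$ from the compound-Poisson integral, especially since $t$ may be $O(1)$ so that $D_v$ has nontrivial mass near small values. The remaining steps parallel the ROC proof, with a DROC adaptation of \Cref{no d1} ensuring that vertices of degree less than 2 (where $C(v)$ is ill-defined) contribute negligibly.
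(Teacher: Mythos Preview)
Your decomposition is correct and matches the paper: conditioning on the number $M$ of communities containing $v$, discarding multi-community triangles via $s/n=o(q)$, and identifying the size-biased edge probability $\tilde q = q(\sum_u t(u)^2)^2/(sd^3n^2)$ between two random neighbors of $v$ in a common community are exactly the steps the paper takes.

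The difference is in the final evaluation. The paper does not attack the random ratio $\sum_C Y_C(Y_C-1)/\bigl(D_v(D_v-1)\bigr)$ via compound-Poisson generating functions and the double-integral identity. Instead it first computes $\E{C(v)\mid M=k}$ in closed form, using the Binomial identity $\E{1/(X+1)}=(1-(1-p)^{m+1})/(p(m+1))$ applied to $\deg(v)$ (this is the step hidden inside your ``$\E{1/\deg(v)\mid u\sim v}$'' if you unpack $N_u$). The result is simply
\[
\E{C(v)\mid M=k}=(1+o(1))\,\frac{(\sum_u t(u)^2)^2}{qd^3n^2s}\cdot f(k),\qquad f(k)=\frac{(1-e^{-tqk})^2}{k^3},
\]
so the whole problem reduces to $\E{f(M)}$ for the single Binomial variable $M$ with mean $1/q$. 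For this the paper uses a second-order Taylor expansion of $f$ about $1/q$ with Lagrange remainder: one checks $0\le f''(z)\le q^5(12+2t^2e^{-2t})$ on the relevant range, and since $\Var{M}\le 1/q$ this gives exactly $\E{f(M)}=(1-e^{-t})^2q^3+c_tq^4$ with $c_t\in[0,\,6+t^2e^{-2t})\subset[0,6.2)$.

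So the specific numerical bound $6.2$ is not an artifact of careful asymptotics of a compound-Poisson integral; it drops out immediately as $\sup_{t\ge 0}(6+t^2e^{-2t})$ from the second-derivative bound and the variance of $M$. Your route would also work, but the Taylor approach is shorter and makes the constant transparent---and, importantly, it gives an inequality valid for all $q$ (including $q=\Theta(1)$), whereas a power-series expansion ``$R(t,q)=(1-e^{-t})^2q+c_tq^2+O(q^3)$'' would need a separate argument to control the remainder uniformly.
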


\noindent Equation \cref{cc droc} in the proof of the theorem gives a precise statement of the expected clustering coefficient conditioned on community membership.



\paragraph{DROC proofs.}\label{ex proofs}

\begin{proof} (of \Cref{ex cc}.) Let $v$ be a vertex with target degree $t= t(v)$, and let $k$ denote the number communities containing $v$.  First we claim $deg(v)\sim Bin \left((s-1)k, \frac{t q}{s} \right)$.  Let $s$ be an arbitrary vertex of a community $S$ containing $v$. 
	$$\Pr{ s \sim v \text{ in } S} = \sum_{u \in V} \Pr{s=u} \Pr{v \sim u \text{ in } S} = \sum_{u \in V} \frac{1}{n} \frac{ t(u) t q}{ds}= \frac{tq}{s}.$$
	A vertex in $k$ communities has the potential to be adjacent to $(s-1)k$ other vertices, and each adjacency occurs with probability $t q /s$.
	
	Next, let $N_u$ be the event that a randomly selected neighbor of vertex $v$ is vertex $u$. We compute
	\begin{align}
	\Pr{N_u}&= \sum_{r} \frac{\Pr{ u \sim v \given deg(v)=r}\Pr{deg(v)=r}}{r}\nonumber \\
	&=\sum_r \frac{ \Pr{u \sim v} \Pr{deg(v)=r \given u \sim v}}{r}\nonumber\\
	&=\Pr{u \sim v} \E { \frac{1}{deg(v)} \given u \sim v}\nonumber\\
	&= \lo  \bfrac{s}{n}^2\frac{n}{(s-1)q} \frac{ t(u)t q}{sd} \left(\frac{1-e^{-tqk}}{tk q}\right)\label{deg ex} \\
	&= \lo  \frac{t(u)\brac{1-e^{-tqk}}}{qkdn}\nonumber.
	\end{align}
	To see \Cref{deg ex}, note that by the first claim $\E { \frac{1}{deg(v)} \given u \sim v}= \E{\frac{1}{X+1}}$ where $X \sim Bin\left((s-1)k -1, \frac{tq }{s}\right)$. Applying \Cref{ex of rec p1} and assuming $s=\omega(1)$, we obtain 
	$$\E { \frac{1}{deg(v)} \given u \sim v}=\frac{1-(1-\frac{tq }{s})^{(s-1)k }}{((s-1)k )\frac{tq }{s}}= \lo \frac{1-e^{-tqk}}{tk q}.$$
	
	Now we compute the expected clustering coefficient conditioned on the number of communities the vertex is part of under the assumption that $s/n=o(q)$. Observe \begin{align}
	\E {C(v) \given \text{$v$ in $k$ communities}}&=\sum_{u,w} N_u N_w\Pr{ u \sim w \given u \sim v \text{ and } w \sim v }\nonumber \\
	&=\sum_{u,w} \frac{t(u)t(w)\brac{1-e^{-tqk}}^2}{(qkdn)^2}\brac{ \frac{1}{k} + \bfrac{s}{n}^2\frac{n}{(s-1)q} } \frac{t(u)t(w)q}{sd}\nonumber \\
	&= \lo \frac{\brac{1-e^{-tqk}}^2 \brac{\sum_{u \in V} t(u)^2}^2}{qd^3k^3n^2s}. \label{cc droc}
	\end{align}

	Next compute the expected clustering coefficient without conditioning on the number of communities. To do so we need to compute the expected value of the function $f(k)=\frac{(1- e^{-kqt})^2}{k^3}$. We first use Taylor's theorem to give bounds on $f(k)$. For all $k$, there exists some $z\in [1/q, k]$ such that
	\begin{equation*}
	f(k)= f\bfrac{1}{q} + f'\bfrac{1}{q} \brac{k-\frac{1}{q}} + \frac{ f''(z)}{2}\brac{k-\frac{1}{q}}^2.
	\end{equation*}
	Note that for $z \in [1/q, k]$ 
	\begin{align*} f''(z)&= \frac{12 (1 - e^{-k q t})^2}{k^5} - \frac{
		12 e^{-k q t}(1 - e^{-k q t}) q t}{k^4} + \frac{
		2 e^{-2 k q t} q^2 t^2}{k^3} - \frac{
		2 e^{-k q t} (1 - e^{-k q t}) q^2 t^2}{k^3}\\
	&\leq \frac{12 (1 - e^{-k q t})^2}{k^5} + \frac{
		2 e^{-2 k q t} q^2 t^2}{k^3}\\
	&\leq q^5\brac{ 12+ 2t^2e^{-2t}},  
	\end{align*}
	and $$f''(z) \geq 0.$$
	It follows that 
	\begin{equation} \label{taylor} 
	f\bfrac{1}{q} + f'\bfrac{1}{q} \brac{k-\frac{1}{q}}  \leq f(k)\leq  f\bfrac{1}{q} + f'\bfrac{1}{q} \brac{k-\frac{1}{q}} + q^5\brac{ 6+ t^2e^{-2t}}\brac{k-\frac{1}{q}}^2.
	\end{equation}

	Let $M \sim Bin(n/(sq), s/n)$ be the random variable for the number of communities a vertex $v$ is part of. (Since $s= \omega(1)$ replacing the number of communities by $n/(sq)$ changes the result by a factor of $\lo$.) We use \Cref{taylor} to give bounds on the expectation of $f(M)$, 
	
	\begin{align*}
	\E{f(M)} &\leq  \E{ f\bfrac{1}{q} + f'\bfrac{1}{q} \brac{M-\frac{1}{q}} + q^5\brac{ 12+ 2t^2e^{-2t}}\brac{M-\frac{1}{q}}^2}\\
	&=(1- e^{-t})^2q^3 + \frac{1}{q}\brac{1-\frac{s}{n}} q^5 \brac{ 6+ t^2e^{-2t}}\\
	&\leq (1- e^{-t})^2q^3 + q^4 \brac{ 6+ t^2e^{-2t}}
	\end{align*}
	and 
	\begin{align*}
	\E{f(M)} \geq  \E{ f\bfrac{1}{q} + f'\bfrac{1}{q} \brac{M-\frac{1}{q}} }
	&=(1- e^{-t})^2q^3. \end{align*} 
	Therefore $ \E{f(M)}  = (1- e^{-t})^2q^3 + c_t q^4 $ for some constant $c_t \in [0, 6.2)$.
	
	Finally, we compute
	\begin{align*} 
	\E {C(v)}&= \sum_k \Pr{M=k}\frac{\brac{1-e^{-tqk}}^2 \brac{\sum_{u \in V} t(u)^2}^2}{qd^3k^3n^2s}\\
	&= \frac{\brac{\sum_{u \in V} t(u)^2}^2}{qd^3n^2s} \E {f(M)}\\
	&= \lo \frac{\brac{\sum_{u \in V} t(u)^2}^2}{d^3n^2s} \brac{ (1- e^{-t})^2q^2 +c_tq^3}.
	\end{align*}

	%
\end{proof}

\begin{proof} (of \Cref{power law}.) Let $d= mean(D)$. We compute $$ \E {d} = \sum_{k=1}^\infty \frac{k^{-\gamma+1}}{\zeta(\gamma)} = \frac{\zeta(\gamma-1)}{\zeta(\gamma)}.$$
	
	Next we claim that with high probability the maximum target degree of a vertex is at most $t_0=n^{2/(\gamma-1)}$. Let $X$ be the random variable for the number of indices $i$ with $t(v_i)>k_0$. 
	
	\begin{align*}
	\Pr{  \max_i t(v_i) > t_0 } &\leq \E {X }= n \Pr{ t(v_1) > t_0} \leq n \sum_{i=t_0+1}^{\infty} \frac{i^{-\gamma}}{\zeta(\gamma)}\\
	&\leq n\int_{i=t_0}^{\infty}  \frac{i^{-\gamma}}{\zeta(\gamma)}= \bfrac{1}{\zeta(\gamma)(\gamma-1)} n t_0^{1-\gamma}=o(1).
	\end{align*}
	It follows that $ \max_i t(v_i)^2 \leq n^{\frac{1}{\gamma-1}}$, and so $\max_i t(v_i)^2 \leq \frac{sd}{q}$.
\end{proof}

\subsection{Comparison of ROC and DROC to other random graph models} \label{other models}
The ROC model captures any pair of triangle-to-edge and four-cycle-to edge ratios simultaneously, and the DROC model can exhibit a wide range of degree distributions with high clustering coefficient. Previous work \cite{Hol02}, \cite{Ost13}, and \cite{Rav02} provides models that produce power law \networks with high clustering coefficients. Their results are limited in that the resulting \networks are restricted to a limited range of power-law parameters, and are either deterministic or only analyzable empirically. In contrast, the DROC model is a fully random model designed for a variety of degree distributions (including power law with parameter $\gamma >2$) and can  provably produce \networks with a range of clustering coefficient. The algorithm presented in \cite{Vol04} produces \networks with tunable degree distribution and clustering, but  unlike ROC \networks, there is no underlying community structure and the resultant \networks do not exhibit the commonly observed inverse relationship between degree and clustering coefficient. 

The Block Two-Level Erd\H{o}s and R\'enyi (BTER) model produces graphs with scale-free degree distributions and random dense communities \cite{Ses12}. However, the communities in the BTER model do not overlap; all vertices are in precisely one E-R community and all other edges are added during a subsequent configuration model phase of construction. Moreover, in the BTER model community membership is determined by degree, which ensures that all vertices in a BTER community have similar degree. In contrast, the degree distribution within a DROC community is a random sample of the entire degree distribution.

\subsection{ROC as a model for real-world \networks.}\label{sec:rw dis}
Modeling a \network as the union of relatively dense communities has explanatory value for many real-world settings, in particular for social and biological networks. Social networks can naturally be thought of as the union of communities where each community represents a shared interest or experience  (e.g. school, work, or a particular hobby); the conceptualization of social networks as overlapping communities has been studied  in  \cite{Pal07}, \cite{Xie11}.
Protein-protein interaction networks can also be modeled by overlapping communities, each representing a group of proteins that interact with each other in order to perform a specific cellular process. Analyses of such networks show  proteins are involved in multiple cellular processes, and therefore overlapping communities define the structure of the underlying \network  \cite{Ahn10}, \cite{Kro06}, \cite{Bad03}.

Our model therefore may be a useful tool for approximating large \networks.  It is often not possible to test algorithms on \networks with billions of vertices (such as the brain, social \networks, and the internet). Instead, one could use the DROC model to generate a smaller \network with same clustering coefficient and degree distribution as the large \network, and then optimize the algorithm in this testable setting.  Further study of such a small \network approximation could provide insight into the structure of the large \network of interest. 

Moreover, the ROC model could be used as a null hypothesis for testing properties of a real-world networks known to have community structure. It is established practice to compare real-world \networks to various random \network models to understand the non-random aspects of its structure (\cite{Cir08, Son05, New05, New01}). The ROC model is particularly well-suited to be the null hypothesis \network for \networks with known community structure.  
Comparing such a network to a ROC network would differentiate between properties of the network that are artifacts of community structure and those that are unique to the \network.

\section{Sequences of random graphs}\label{sec:random seq}
\subsection{Almost sure convergence for sequences of random graphs} \label{random seq}
By an abuse of notation, we say that  a sequence of random graphs converges to a limit vector $L$ if a sequence of graphs drawn from the sequence of random graph models almost surely converges $L$. \Cref{rg con con} gives a method for showing that a sequence of random graphs converges, which we apply to describe the limits of sequence of E-R graphs (\Cref{er ex}). We will again apply \Cref{rg con con} when we discuss the convergence of sequences of ROC graphs (\Cref{deviate} and \Cref{roc seq}).

\begin{definition}[convergence of random graph sequences] Let $M=(M_i)$ be a sequence of random graph models. Let $S$ be a sequence of graphs $(S_i)$ where $S_i \sim M_i$. We say the sequence of random graphs $M$ converges to $L$ if a sequence $S$ drawn from $M$ almost surely converges to $L$.
\end{definition}

\begin{lemma} \label{rg con con}
	Let $M=(M_i)$ be a sequence of random graph models, and let $(S_i)$ be a sequence of graphs where $S_i \sim M_i$. Let $\ve>0$ and $A_{i,\ve, \alpha}(w_j)$ be the event that $|W_j(S_i,\alpha) - w_j| \geq \ve$. 
	\begin{enumerate}
		\item 	If for all $j$ and $\ve>0$ $$\sum_{i=1}^\infty \Pr{A_{i,\ve, \alpha}(w_j)}< \infty,$$ then $M$ converges to $L= (w_3, w_4, \dots  )$ with sparsity exponent $\alpha$.
		\item If the above hypothesis holds for all $j \leq k$, then $M$ $k$-converges to $L=(w_3, w_4, \dots w_k)$ with $k$-sparsity exponent $\alpha$.
		\item  Let $D(S_i)$ be the random variable for the average degree of a vertex in $S_i$, let $d_i = \E {D(S_i)}$, and let $n_i$ be the number of vertices of $S_i$. 
		If $\lim_{i \to \infty} \frac{\E {W_j(S_i)} }{n_i d_i^{ 1+ \alpha(j-2)}}=w_j$ then there exists an index $i_0$ and a constant $C$ such that  
		$$\sum_{i=1}^\infty \Pr{A_{i,\ve, \alpha}(w_j)} \leq C+  \sum_{i=i_0}^\infty  \frac{\Var[]{D(S_i)}}{ d_i^2} + \frac{\Var[]{W_j(S_i)}}{ \brac{n_id_i^{ 1+ \alpha(j-2)}}^2 }.$$ 
	\end{enumerate}
	
\end{lemma}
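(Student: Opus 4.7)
The plan for parts (1) and (2) is a direct application of the first Borel--Cantelli lemma to the events $A_{i,\varepsilon,\alpha}(w_j)$. For fixed $\varepsilon > 0$ and $j$ (with $j \leq k$ for part (2)), summability of $\Pr{A_{i,\varepsilon,\alpha}(w_j)}$ over $i$ implies that almost surely $|W_j(S_i,\alpha) - w_j| < \varepsilon$ holds for all but finitely many $i$. Taking a countable intersection over $\varepsilon = 1/m$, $m \in \N$, and over $j \in \N$ (resp.\ $j \leq k$) preserves probability one, so $W_j(S_i,\alpha) \to w_j$ almost surely for every relevant $j$. This is exactly the almost sure convergence of $(S_i)$ and hence of the random graph sequence $M$.

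For part (3), the plan is a Chebyshev plus union-bound argument that separates the stochastic deviation of $W_j(S_i,\alpha)$ from $w_j$ into a degree deviation and a walk-count deviation. Writing
$$W_j(S_i,\alpha) = \frac{W_j(S_i)}{\E{W_j(S_i)}} \cdot \left(\frac{d_i}{D(S_i)}\right)^{1+\alpha(j-2)} \cdot \frac{\E{W_j(S_i)}}{n_i d_i^{1+\alpha(j-2)}},$$
the rightmost factor is deterministic and converges to $w_j$ by hypothesis, so I would choose $i_0 = i_0(\varepsilon, j, w_j)$ with that factor within $\varepsilon/2$ of $w_j$ for all $i \geq i_0$. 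Next pick $\delta = \delta(\varepsilon, j, w_j) > 0$ small enough that whenever both $|W_j(S_i)/\E{W_j(S_i)} - 1| \leq \delta$ and $|D(S_i)/d_i - 1| \leq \delta$, the product above lies within $\varepsilon$ of $w_j$ (continuity of $(1+x)^{1+\alpha(j-2)}$ at $0$, with $\alpha(j-2)$ bounded). Hence for $i \geq i_0$, $A_{i,\varepsilon,\alpha}(w_j)$ is contained in the union of the two corresponding failure events.

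Applying Chebyshev to each,
$$\Pr{|D(S_i) - d_i| \geq \delta d_i} \leq \frac{\Var{D(S_i)}}{\delta^2 d_i^2}, \qquad \Pr{|W_j(S_i) - \E{W_j(S_i)}| \geq \delta \E{W_j(S_i)}} \leq \frac{\Var{W_j(S_i)}}{\delta^2 \E{W_j(S_i)}^2},$$
and substituting $\E{W_j(S_i)} = (1+o(1))\, w_j\, n_i d_i^{1+\alpha(j-2)}$ (valid for $w_j > 0$ once $i \geq i_0$), the second bound becomes at most a constant multiple of $\Var{W_j(S_i)}/(n_i d_i^{1+\alpha(j-2)})^2$. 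A union bound and summation over $i \geq i_0$ yield the variance sum in the statement up to multiplicative constants, while the first $i_0 - 1$ probabilities, each at most $1$, contribute the additive constant $C$; the $\varepsilon$-, $j$-, $w_j$-dependent factors from $\delta^{-2}$ and the $w_j^{-2}$ normalization are absorbed by enlarging $C$ and $i_0$.

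The main obstacle is the case $w_j = 0$, which arises naturally (e.g.\ for odd $j$ on bipartite limits such as the hypercube sequence); here the $\E{W_j(S_i)}^{-2}$ factor in the multiplicative Chebyshev bound is uninformative. The remedy is to bypass the multiplicative decomposition and apply Chebyshev with an absolute threshold $t = (\varepsilon/4) n_i d_i^{1+\alpha(j-2)}$: for $i \geq i_0$ chosen large enough that $\E{W_j(S_i)} \leq t$, the event $\{W_j(S_i) \geq 3t\}$ implies $\{|W_j(S_i) - \E{W_j(S_i)}| \geq 2t\}$, which Chebyshev bounds by $\Var{W_j(S_i)}/(4t^2)$ --- exactly the target bound up to constants. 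Combining with the same degree-deviation bound as before closes the case, and the rest of the argument is bookkeeping.
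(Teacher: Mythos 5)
Your parts (1) and (2) are exactly the paper's argument: Borel--Cantelli for each fixed $j$ and $\ve$, then a countable intersection over $\ve=1/m$ and over $j$.

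For part (3) you take a slightly different route than the paper. You begin with a multiplicative decomposition
$W_j(S_i,\alpha) = \tfrac{W_j(S_i)}{\E{W_j(S_i)}}\cdot \bigl(\tfrac{d_i}{D(S_i)}\bigr)^{1+\alpha(j-2)} \cdot \tfrac{\E{W_j(S_i)}}{n_i d_i^{1+\alpha(j-2)}}$
and apply Chebyshev to the first two factors. This is fine when $w_j>0$, where $\E{W_j(S_i)}\sim w_j n_i d_i^{1+\alpha(j-2)}$ lets you convert the $\Var{W_j}/\E{W_j}^2$ bound into the desired form. You correctly notice that the decomposition breaks down when $w_j=0$ (which is a real case, e.g.\ odd $j$ for bipartite sequences), and you fix it by replacing the relative Chebyshev threshold with the absolute one $t=(\ve/4)\,n_i d_i^{1+\alpha(j-2)}$; that fix is sound. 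The paper avoids this bifurcation altogether: its \Cref{deviation bound} works with additive thresholds $\lambda\,n_i d_i^{1+\alpha(j-2)}$ from the outset, with $\lambda=\ve/2-|g_i|$ and $\delta=\min\{\ve/(2j(w_j+\ve)),\ 1/(2(j-1)^2)\}$, so both $w_j=0$ and $w_j>0$ are handled by a single argument. Your proof is correct, but the paper's additive formulation is cleaner and is essentially what your fallback reduces to; you gain nothing from the multiplicative detour and it creates the $w_j=0$ complication you then have to repair.

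One small point to make explicit: in the $w_j=0$ case, you still need to fold the degree-deviation event into the containment (since $W_j(S_i,\alpha)$ normalizes by $D(S_i)$, not $d_i$); you say ``combining with the same degree-deviation bound as before,'' and indeed if $|D(S_i)-d_i|\le \delta d_i$ and $W_j(S_i)<3t$ then $W_j(S_i,\alpha) < 3\ve/(4(1-\delta)^{1+\alpha(j-2)}) < \ve$ for $\delta$ small enough. It would be worth writing that inequality out to close the case cleanly.
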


\begin{proof} We begin with (1) and (2). Fix $j$. To show that $W_j(S_n, \alpha) \to w_j$ almost surely, it suffices to show that for all $\ve>0$, $\Pr{ A_{i,\ve, \alpha}(w_j) \text{ occurs infinitely often}}=0$. By the Borel Cantelli Lemma $\sum_{n=1}^\infty \Pr{A_{i,\ve, \alpha}(w_j)}< \infty$ implies $\Pr{ A_{i,\ve, \alpha}(w_j) \text{ occurs infinitely often}}=0$. Statements (1) and (2) follow from the fact that a countable intersection of almost sure events occurs almost surely.
	
	For (3), we apply \Cref{deviation bound} which bounds the probability $W_j(S_i,\alpha)$ deviates from expectation by separately bounding the probabilities that the number of edges and the number of closed $j$-walks in $S_i$ deviate from expectation.  Let $g_i=w_j-  \frac{\E {W_j(G_i)} }{n_i d_i^{ 1+ \alpha(j-2)}}$, and so $g_i=o(1)$. Let $i_0$ be such that for all $i\geq i_0$, $|g_i|< \ve/4$, and let $c= \min \{ \delta, \ve/4 \}$. By \Cref{rg con con}(3) for all $i \geq i_0$ $$\Pr{A_{i,\ve, \alpha}(w_j)} \leq \frac{1}{c^2} \brac{ \frac{ \Var{D(G_i)}}{d_i^2}+ \frac{\Var{W_j(G_i)}}{\brac{n_i d_i^{1+\alpha(j-2)}}^2}}.$$
	The claim follows from the observation that
	$$\sum_{i=1}^\infty \Pr{A_{i,\ve, \alpha}(w_j)} \leq i_0 +  \frac{1}{c^2}\sum_{i=i_0}^\infty  \frac{\Var[]{D(S_i)}}{ d_i^2} + \frac{\Var[]{W_j(S_i)}}{ \brac{n_id_i^{ 1+ \alpha(j-2)}}^2 }.$$ 
\end{proof}

\begin{lemma}\label{deviation bound}
	Let $S$ be a random graph on $n$ vertices.  Let $\ve>0$ and $A_{\ve, \alpha}(w_j)$ be the event that $|W_j(S,\alpha) - w_j| \geq \ve$.  Let $D(S)$ be the random variable for the average degree of a vertex in $S$, and let $d = \E {D(S)}$. Let $g=w_j-  \frac{\E {W_j(S)} }{n d^{ 1+ \alpha(j-2)}}$. For $|g|< \ve/2$, $\delta=\min\left\{ \frac{\ve}{2j(w_j+\ve)}, \frac{1}{2(j-1)^2} \right\}$ and $\lambda=\ve/2- |g|$, $$\Pr{A_{\ve, \alpha}(w_j)} \leq  \frac{\Var[]{D(S)}}{ \delta^2 d^2} + \frac{\Var[]{W_j(S)}}{\lambda^2 \brac{nd^{ 1+ \alpha(j-2)}}^2 }.$$
\end{lemma}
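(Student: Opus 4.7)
The plan is to reduce the event $A_{\ve,\alpha}(w_j)$ to two simpler deviation events---one for the average degree $D(S)$ and one for the unnormalized walk count $W_j(S)$---each controllable by Chebyshev's inequality.

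I would begin by writing $\beta = 1 + \alpha(j-2)$ and $\mu_W = \E{W_j(S)}$, so that $W_j(S,\alpha) = W_j(S)/(nD(S)^\beta)$. Inserting $W_j(S)/(nd^\beta)$ and $\mu_W/(nd^\beta)$ yields the decomposition
\begin{equation*}
W_j(S,\alpha) - w_j \;=\; \underbrace{\frac{W_j(S)}{nD(S)^\beta} - \frac{W_j(S)}{nd^\beta}}_{T_1} \;+\; \underbrace{\frac{W_j(S) - \mu_W}{nd^\beta}}_{T_2} \;-\; g.
\end{equation*}
The third term is deterministic with magnitude $|g|$, and the choice $\lambda = \ve/2 - |g|$ ensures $\lambda + |g| = \ve/2$; hence it will suffice to show that both $|T_1|$ and $|T_2|$ are at most $\ve/2$ outside the bad events below.

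Next I would introduce
\begin{equation*}
E_1 = \bigl\{|D(S) - d| \geq \delta d\bigr\}, \qquad E_2 = \bigl\{|W_j(S) - \mu_W| \geq \lambda n d^\beta\bigr\},
\end{equation*}
and bound $\Pr{E_1} \leq \Var{D(S)}/(\delta^2 d^2)$ and $\Pr{E_2} \leq \Var{W_j(S)}/(\lambda^2 n^2 d^{2\beta})$ via Chebyshev. The goal is then to establish the containment $A_{\ve,\alpha}(w_j) \subseteq E_1 \cup E_2$, after which a union bound yields the stated inequality.

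For the containment, on $E_2^c$ one gets $|T_2| \leq \lambda$ directly, together with the crude bound $W_j(S)/(nd^\beta) \leq w_j + |g| + \lambda = w_j + \ve/2$. On $E_1^c$, the mean value theorem applied to $x \mapsto x^{-\beta}$ between $d$ and $D(S)$ gives
\begin{equation*}
\left| \frac{1}{D(S)^\beta} - \frac{1}{d^\beta}\right| \;\leq\; \frac{\beta \delta}{d^\beta(1-\delta)^{\beta+1}},
\end{equation*}
so $|T_1| \leq (w_j + \ve/2) \cdot \beta\delta/(1-\delta)^{\beta+1}$. The main technical hurdle is calibrating $\delta$ to make this small enough, and this is where the two-part minimum in the hypothesis appears. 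The bound $\delta \leq 1/(2(j-1)^2)$ together with $\beta+1 \leq j$ and Bernoulli's inequality forces $(1-\delta)^{\beta+1}$ to stay bounded below by an absolute constant, neutralising the higher-order terms in the expansion of $(1+u)^{-\beta}$; the linear bound $\delta \leq \ve/(2j(w_j+\ve))$ then pins down the leading-order quantity $\beta\delta(w_j+\ve)$. Combining these two estimates gives $|T_1| \leq \ve/2$, which closes the containment and completes the proof.
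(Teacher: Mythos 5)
Your high-level strategy --- condition on $D(S)$ being within $\delta d$ of $d$ and then apply Chebyshev to a deviation event for $W_j(S)$ --- is the same as the paper's, and your additive split $W_j(S,\alpha)-w_j = T_1 + T_2 - g$ is an arithmetically equivalent repackaging of their one-sided events (b) and (c). The genuine gap is in your estimate of $|T_1|$. Your MVT bound gives $|T_1|\le (w_j+\ve/2)\,\beta\delta\,(1-\delta)^{-(\beta+1)}$ and you then assert that $(1-\delta)^{\beta+1}$ is ``bounded below by an absolute constant,'' from which you conclude $|T_1|\le\ve/2$. This does not close: with $(1-\delta)^{\beta+1}\ge 5/8$ (the best constant your Bernoulli step can deliver) and $(w_j+\ve/2)\beta\delta \le \tfrac{j-1}{2j}\ve$, you only obtain $|T_1|\le \tfrac{4(j-1)}{5j}\ve$, which already exceeds $\ve/2$ at $j=3$ and tends to $\tfrac45\ve$ as $j\to\infty$. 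What is actually needed is the $j$-dependent lower bound $(1-\delta)^{\beta+1}\ge (j-1)/j$, a quantity tending to $1$, not an absolute constant. Worse, Bernoulli gives $(1-\delta)^{\beta+1}\ge 1-j\delta$, and $j\delta \le j/(2(j-1)^2)\le 1/j$ requires $j^2\le 2(j-1)^2$, which fails at $j=3$; that case needs a separate numerical check ($(7/8)^3=343/512>2/3$). Your one-line ``Bernoulli plus absolute constant'' reasoning therefore does not stand as written.

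The paper sidesteps all of this by formulating the one-sided events as $W_j(S)\ge (w_j+\ve)\,n(d(1-\delta))^\beta$ and $W_j(S)\le (w_j-\ve)\,n(d(1+\delta))^\beta$, so the correction factors are $\gamma^{-}=1-(1-\delta)^\beta\le\beta\delta$ (Bernoulli, with no inflation by $(1-\delta)^{-(\beta+1)}$) and $\gamma^{+}=(1+\delta)^\beta-1$ (handled by a short binomial expansion), and the $\delta$-calibration closes cleanly. If you want to keep your $T_1$ route, replace the MVT bound by the identity $(1-\delta)^{-\beta}-1 = \tfrac{1-(1-\delta)^\beta}{(1-\delta)^\beta}\le\tfrac{\beta\delta}{(1-\delta)^\beta}$, and then observe that $(1-\delta)^\beta\ge 1-(j-1)\delta\ge (j-1)/j$, which \emph{does} follow from $\delta\le\tfrac1{2(j-1)^2}\le\tfrac1{j(j-1)}$ uniformly for $j\ge 2$.
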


\begin{proof}
	Observe that if $A_{i,\ve, \alpha}(w_j)$ holds, then  for any $\delta>0$ at least one of the following events hold: 
	\begin{enumerate}[ \quad  (a)]
		\item $|D(S)-d|> \delta d$
		\item $W_j(S)\geq (w_j+ \ve)\brac{n \brac{d (1-\delta)}^{1+\alpha(j-2)}}$
		\item $W_j(S)\leq (w_j- \ve)\brac{n \brac{d (1+\delta)}^{1+\alpha(j-2)}}$.
	\end{enumerate}
	When (a) does not hold $$\frac{W_j(S)}{n \brac{d (1+\delta)}^{1+\alpha(j-2)}}\leq W_j(S,\alpha) \leq\frac{W_j(S)}{n \brac{d (1-\delta)}^{1+\alpha(j-2)}} .$$ Assume (a) does not hold and $A_{\ve, \alpha}(w_j)$.
	If $W_j(S, \alpha) \geq w_j + \ve$ then (b) holds. If $W_j(S, \alpha) \leq w_j - \ve$ then (c) holds. The observation follows. 
	
	We now give a bound on the probability of (b) or (c). Let $\gamma^{-}= 1- (1-\delta)^{1+ \alpha(j-2)}$ and $\gamma^{+}=  (1+\delta)^{1+ \alpha(j-2)}-1$. We write $w_j= \frac{\E {W_j(S)}}{n d^{1+\alpha(j-2)}}+g$. Statement (b) becomes 
	$$W_j(S)- \E {W_j(S)} \geq \brac{ \ve +g - \gamma^{-}\brac{w_j+\ve}}n d^{1+\alpha(j-2)},$$
	and statement (c) becomes 
	$$W_j(S)- \E {W_j(S)} \leq \brac{  \gamma^{+}\brac{w_j-\ve}-\ve +g }n d^{1+\alpha(j-2)}.$$
	Under the assumptions that $\delta=\min\left\{ \frac{\ve}{2j(w_j+\ve)}, \frac{1}{2(j-1)^2} \right\}$ and $\alpha\leq 1$, 
	$$\gamma^{-} =1- (1-\delta)^{1+ \alpha(j-2)} \leq \delta (1+ \alpha(j-2)) < \delta j \leq \frac{\ve}{2(w_j +\ve)} $$
	$$\gamma^{+}  = (1+\delta)^{1+ \alpha(j-2)} -1 \leq  \delta(j-1) + \sum_{i=2}^{j-1} \delta^i {j-1 \choose i}\leq \delta(j-1) +2 \delta^2 (j-1)^2 < \delta j \leq \frac{\ve}{2(w_j -\ve)}.$$
	Let $\lambda= \ve/2- |g|$ and note 
	$$\ve +g - \gamma^{-}\brac{w_j+\ve} \geq \lambda\quad \text{ and } \quad  \ve -g -\gamma^{+}\brac{w_j-\ve} \geq \lambda. $$
	It follows from Chebyshev's inequality that
	$$\Pr{ (b) \text{ or } (c) } \leq \Pr{ |W_j(S)-\E {W_j(S)}| \geq c(\delta) n d^{1+\alpha(j-2) }} \leq \frac{ \Var[]{W_j(S)}}{\brac{\lambda nd^{1+\alpha(j-2)}}^2}.$$
	
	Finally, we apply Chebyshev's inequality to bound the probability of (a), apply a union bound for the event $A_{i,\ve, \alpha}(w_j)$, and obtain
	$$\Pr{A_{\ve, \alpha}(w_j)}\leq \Pr{(a)}+ \Pr{ (b) \text{ or } (c) }  \leq  \frac{\Var[]{D(S)}}{ \delta^2 d^2} + \frac{\Var[]{W_j(S)}}{\lambda^2 \brac{nd^{ 1+ \alpha(j-2)}}^2 }.$$
\end{proof}

Our final example is sequences of Erd\H{o}s-R\'enyi random graphs. These demonstrate some of the subtler issues with defining limits.

\begin{lemma}[Erd\H{o}s-R\'enyi sequence] \label{er ex}
	Let $(G_n) \sim G\brac{n^{2\ell}, n^{2- 2\ell}}$ for $\ell >1$. We denote the $j^{th}$ Catalan number $Cat_j=\frac{1}{j+1} { 2j \choose j}$.
	\begin{enumerate}
		\item For $k< 2\ell$, the $k$-sparsity exponent of  $(G_n)$ is $1/2$ and the $k$-limit is $(w_3, w_4, \dots w_k)$ where $w_j=0$ for $i$ odd and $w_j = Cat_{i/2}$ for $i$ even.
		\item For $k= 2\ell$, the $k$-sparsity exponent of  $(G_n)$ is $1/2$ and the $k$-limit is $(w_3, w_4, \dots , w_{k-1}, \overline{w}_k)$ where $w_j=0$  for odd $j$, $w_j = Cat_{j/2}$ for even $j$, and $\overline{w}_k = w_k +1$.  
		\item For $k> 2\ell$, the sparsity exponent of  $(G_n)$ is $\frac{k-\ell-1}{k-2}$ and the $k$-limit is $(w_3, w_4, \dots , w_k)$ where $w_j=0$  for $j<k$, $w_k=1$.
		\item The sparsity exponent of $(G_n)$ is $1$ and the limit is $(0,0, \dots)$. 
	\end{enumerate}
\end{lemma}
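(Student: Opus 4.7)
The plan is to compute $\E{W_j(G_n)}$ by classifying closed $j$-walks according to the isomorphism type of their underlying connected subgraph $H$ with $a$ vertices and $b$ edges. Writing $N=n^{2\ell}$, $p=n^{2-2\ell}$, $d=Np\sim n^2$, each walk shape contributes $\Theta(N^a p^b)=\Theta(N^{1-c}d^b)$ in expectation, where $c=b-a+1$ is the first Betti number. Two families give the leading order. \emph{Tree walks} ($c=0$) must traverse every edge an even number of times, so they exist only for $j$ even with maximal $b=j/2$, and by the Catalan enumeration of rooted plane trees contribute $\text{Cat}_{j/2}\,Nd^{j/2}(1+o(1))$. \emph{Simple $j$-cycles} ($c=1$, $b=j$) are the only $c=1$ shapes with $b=j$ (an Eulerian degree argument rules out trees attached to a cycle) and contribute $(1+o(1))d^j$. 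Every other shape is strictly subdominant: a tree with $b<j/2$ gives $O(Nd^{j/2}/d)$, any $c=1$ shape with $b<j$ gives $O(d^{j-1})$, and any $c\geq 2$ shape gives $O(d^j/N)$.

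Since $N=d^\ell$, the ratio of the tree to cycle contributions is $N/d^{j/2}=d^{\ell-j/2}$, producing a phase transition at $j=2\ell$. Dividing by $Nd^{1+\alpha(j-2)}$ then verifies each case of the lemma. In case (1), $k<2\ell$ and $\alpha=1/2$: for even $j\le k$ the tree term gives $\text{Cat}_{j/2}$ and the cycle term vanishes like $d^{j/2-\ell}$; for odd $j$ there is no tree term and the cycle vanishes. In case (2), $k=2\ell$: the same $\alpha=1/2$ works, and at $j=k$ the cycle term also contributes at order $Nd^\ell$, producing the extra $+1$. In case (3), $k>2\ell$: matching the cycle contribution at $j=k$ forces $1+\alpha(k-2)=k-\ell$, hence $\alpha=(k-\ell-1)/(k-2)>1/2$. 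For $j<k$, the normalized tree exponent $(j-2)(1/2-\alpha)$ and the normalized cycle exponent $(1-\alpha)(j-k)$ are both strictly negative (using $\alpha<1$ since $\ell>1$), so $w_j=0$. For case (4), any $\alpha<1$ leaves the cycle exponent $j(1-\alpha)+2\alpha-1-\ell$ unbounded in $j$, so the limit fails; at $\alpha=1$ this exponent is $1-\ell<0$ uniformly, so the full sparsity exponent is $1$ and every $w_j=0$.

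To upgrade expectation statements to almost-sure convergence I apply \Cref{rg con con}(3). The variance $\Var{W_j(G_n)}$ admits the standard pair-of-walks decomposition: pairs sharing no edges contribute exactly $\E{W_j(G_n)}^2$, and the remaining pairs (sharing at least one edge) are of strictly smaller order, giving $\Var{W_j(G_n)}/(Nd^{1+\alpha(j-2)})^2=O(n^{-\eps})$ for some $\eps>0$. Similarly $\Var{D(G_n)}/d^2=O(1/(Np))=O(n^{-2})$. The resulting Borel--Cantelli series are summable, so each normalized moment converges almost surely, and a countable intersection of almost sure events is almost sure. The main obstacle is the combinatorial bookkeeping in the first step: bounding the contribution of every walk shape uniformly and, at the boundary $j=2\ell$, tracking the two leading families simultaneously with explicit $1+o(1)$ control. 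The variance estimate is a parallel but more intricate enumeration of overlapping walk pairs, routine for ER graphs but requiring care to ensure the $n$-summability needed by \Cref{rg con con}(3).
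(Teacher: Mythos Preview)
Your approach is essentially the same as the paper's: both classify closed $j$-walks by the pair $(a,b)=(\#\text{vertices},\#\text{edges})$ of the underlying subgraph, identify the two leading families (tree walks giving $\text{Cat}_{j/2}\,Nd^{j/2}$ and simple $j$-cycles giving $d^j$), read off the phase transition at $j=2\ell$, and then upgrade to almost-sure convergence via \Cref{rg con con}(3) using a pair-of-walks variance calculation. Your Betti-number bookkeeping $N^ap^b=N^{1-c}d^b$ is a clean repackaging of the paper's case split into ``walk contains a cycle'' ($a\le b$) versus ``walk traces a tree'' ($a=b+1$).

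One point to tighten: you assert $\Var{W_j(G_n)}/(Nd^{1+\alpha(j-2)})^2=O(n^{-\eps})$ for \emph{some} $\eps>0$ and then claim the Borel--Cantelli series is summable, but summability over $n$ requires $\eps>1$. The paper's \Cref{mean and var} gives the explicit order $\Var{W_j(G)}=\Theta(d^{2j-1}+n^2d^{2\lfloor j/2\rfloor-1}+nd^{\lfloor j/2\rfloor+j-1})$ and then checks case by case that the normalized variance is in fact $O(n^{-2})$. Your sketch flags this (``requiring care to ensure the $n$-summability''), but as written the quantifier on $\eps$ is too weak to conclude.
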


First we compute the expectation and variance of the number of closed $i$-walks in a E-R random graph.
\begin{lemma} \label{mean and var}
	Let $G \sim G(n, d/n)$. Let $W_j(G)$ be the random variable for the number of closed $j$ walks in $G$. Then 
	$$\E{W_j(G)}= d^j + Cat_{j/2} nd^{\lfloor j/2\rfloor} + \Theta\brac{d^{j-1}+ nd^{\lfloor j/2\rfloor-1}}$$
	$$\Var{W_j(G)}= \Theta\brac{d^{2j-1}+ n^2d^{2\lfloor j/2\rfloor-1}+ nd^{\lfloor j/2 \rfloor+j-1}}.$$
\end{lemma}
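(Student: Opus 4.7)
The plan is to expand both moments by enumerating labeled closed $j$-walks in the complete graph on $n$ vertices according to the combinatorial shape of the labeled subgraph they trace, and then identify the dominant shape classes.

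For the expectation, linearity gives
\[
\E{W_j(G)} = \sum_W p^{e(W)}, \qquad p = d/n,
\]
where $W$ ranges over closed $j$-walks on $[n]$ and $e(W)$ is the number of distinct edges used. Grouping by shape $\sigma$ with $v(\sigma)$ distinct vertices and $e(\sigma)$ distinct edges, each shape contributes $C_\sigma n^{v-e} d^e (1+o(1))$ for a combinatorial constant $C_\sigma$. Since the image of a walk is connected, $v \le e+1$, so the product $n^{v-e} d^e$ is maximized in exactly two regimes: simple $j$-cycles ($v = e = j$, contributing $d^j$) and, for $j$ even, tree walks in which each of $j/2$ edges is traversed exactly twice ($v = j/2+1$, $e = j/2$). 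For tree walks the total combinatorial constant equals $Cat_{j/2}$ via the classical bijection between closed walks on a rooted tree with $m$ edges and rooted plane trees with $m$ edges, producing the $Cat_{j/2} \cdot n d^{j/2}$ contribution. All other shapes correspond either to simple cycles with one fewer edge (giving $O(d^{j-1})$) or to tree walks with one fewer edge (giving $O(n d^{\lfloor j/2 \rfloor - 1})$), which together account for the stated error term.

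For the variance, use the second-moment identity
\[
\Var{W_j(G)} = \sum_{W_1, W_2} \left( p^{e_1 + e_2 - e_s} - p^{e_1 + e_2} \right),
\]
where $e_i = e(W_i)$ and $e_s$ counts shared edges. Pairs with $e_s = 0$ contribute zero because the two indicator events then depend on disjoint edge sets. I would group the remaining pairs by joint shape $\sigma$ with $v$ total distinct vertices and $e = e_1 + e_2 - e_s$ total distinct edges; each joint shape contributes order $n^{v-e} d^e$ (the subtracted term is smaller by a factor of $p^{e_s}$). The three summands in the stated bound correspond to the three maximizing families of joint shapes: pairs of simple $j$-cycles overlapping along a single edge (yielding the $d^{2j-1}$ contribution), pairs of tree walks with minimal nontrivial overlap so that the joint graph attains near-maximal vertex count (yielding $n^2 d^{2\lfloor j/2 \rfloor - 1}$), and a simple $j$-cycle joined to a tree walk along a single edge (yielding $n d^{\lfloor j/2 \rfloor + j - 1}$). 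The walk-labeling multiplicities in each family are bounded combinatorial constants involving Catalan and cycle-orbit factors.

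The main obstacle is the combinatorial case analysis for the variance: one must verify that no other joint-shape class exceeds these three contributions. The key structural observation is that $W_1 \cup W_2$ has at most two independent tree-like excesses (one per walk), so $v - e \le 2$, while the constraint that each $W_i$ is a valid closed $j$-walk bounds $e$ above by the values realized in the three dominant families; thus any attempt to increase $e$ beyond these values necessarily reduces $v - e$ below its maximum, and vice versa. Verifying the constants then reduces to a finite enumeration of joint-shape types, each controlled by a Catalan-type count on its tree components together with cycle counts on its cycle components, yielding the claimed $\Theta$ bounds.
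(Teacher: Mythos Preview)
Your expectation argument is correct and matches the paper's: both group walks by $(v,e)$ and read off that simple $j$-cycles give $d^j$ and, for even $j$, walks whose image is a tree on $j/2$ edges give $Cat_{j/2}\,n d^{j/2}$.

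For the variance, your covariance decomposition is a cleaner route than the paper's (the paper computes $\E{W_j^2}$ over \emph{all} pairs, including edge-disjoint ones, extracts the leading and sub-leading orders, and then subtracts $\E{W_j}^2$; in that approach the stated $\Theta$-terms arise already inside $\E{W_j^2}$ before any cancellation). However, your identification of the three dominant joint shapes does not produce the orders you claim. Once two walks share at least one edge their union is connected, so $v\le e+1$, and in each of your three families the shared edge forces you to lose \emph{two} vertices, not one:
\begin{itemize}
\item two simple $j$-cycles sharing one edge have $v=2j-2$, $e=2j-1$, contributing $n^{-1}d^{2j-1}$;
\item two maximal tree walks sharing one edge have $v=2\lfloor j/2\rfloor$, $e=2\lfloor j/2\rfloor-1$, contributing $n\,d^{2\lfloor j/2\rfloor-1}$;
\item a simple $j$-cycle and a maximal tree walk sharing one edge have $v=e=j+\lfloor j/2\rfloor-1$, contributing $d^{\,j+\lfloor j/2\rfloor-1}$.
\end{itemize}
So your route in fact yields each term a factor of $n$ below the statement. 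This is not a failure of your method: the bound you obtain is the true order of the covariance sum, and the lemma as written should be read with $O(\cdot)$ rather than $\Theta(\cdot)$ on the variance (only the upper bound is used downstream, in the Borel--Cantelli argument). The paper's larger expression survives because its sub-leading $\Theta$-terms in $\E{W_j^2}$ include contributions from edge-disjoint pairs (e.g.\ a simple $j$-cycle together with a disjoint closed $j$-walk on $j-1$ edges, giving $a=b=2j-1$), which cancel against matching terms in $\E{W_j}^2$; the paper does not track that cancellation. If you want your write-up to literally match the stated bound, you should note that each of your three covariance contributions is dominated by the corresponding term in the lemma and present it as an upper bound.
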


\begin{proof}
	Let $W_j^{a,b}(G_n)$ be the number of closed $j$ walks involving  $a$ vertices and $b$ edges, so $b \leq j$ and either $a\leq b$ (the walk contains a cycle) or $a=b+1$ and $b \leq j/2$ (the walk traces a tree). Let $f(a,b,j)$ be the number of closed $j$-walks with $b$ total edges on $a$ labeled vertices $1,2, \dots a$ such that the order in which the vertices are first visited is $1,2, \dots a$.  Note $f(j,j,j)=1$ and $f( j/2 +1,j/2,j)=Cat_{j/2  }$ for $j$ even (because there are $Cat_{b}$ ordered trees on $b$ edges, see \cite{sta97}).  Let $\zeta(j)$ be one if $j$ is even and zero otherwise. We split the sum based on whether the walk contains a cycle or traces a tree and compute
	\begin{align*}
	\E{W_j(G)}&= \sum_{b=1}^j \sum_{a=1}^{b+1} \E{W_j^{a,b}(G)}= \sum_{b=1}^j \sum_{a=1}^{b+1}f(a,b,j) \frac{n!}{(n-a)!} \bfrac{d}{n}^b \\
	& = \sum_{b=3}^j \sum_{a=1}^{b}f(a,b,j)  \frac{n!}{(n-a)!} \bfrac{d}{n}^b + \sum_{b=1}^{\lfloor j/2 \rfloor}  f(b+1,b,j)  \frac{n!}{(n-(b+1))!} \bfrac{d}{n}^b \\
	&= d^j + \zeta(j) Cat_{j/2} nd^{ j/2} + \Theta\brac{d^{j-1}+ nd^{\lfloor j/2\rfloor-1}}.
	\end{align*}
	
	To find the variance of $W_j(G)$ we compute the expectation squared. Let $P_j^{a,b}(G)$ be the number of pairs of closed $j$ walks involving a total of $a$ vertices and $b$ edges. Let $g(a,b,j)$ be the number of pairs of closed $j$-walks with $b$ total edges on $a$ labeled vertices $1,2, \dots a$ such that the order in which the vertices are first visited is $1,2, \dots a$ when the first walk is traversed then the second walk. Note $g(2j, 2j, j)= 1$ and $g( j +2 , j, j)= \brac{Cat_{ j/2}}^2$ (since there are $\brac{Cat_{b}}^2$ ways to pick two disjoint ordered trees on $b$ edges).
	
	We split the sum  based on whether both walks contain a cycle, or both trace trees, or one traces a tree and one traces a cycle and compute
	\begin{align*}
	\E{ W_j(G)^2}&= \sum _{b=1}^{2j} \sum_{a=1}^{b+2} \E{P_j^{a,b}(G)} = \sum _{b=1}^{2j} \sum_{a=1}^{b+2} g(a,b,j) \frac{n!}{(n-a)!} \bfrac{d}{n}^b\\
	&= \sum _{b=1}^{2j} \sum_{a=1}^{b} g(a,b,j) \frac{n!}{(n-a)!} \bfrac{d}{n}^b +\sum _{b=1}^{2\lfloor j/2 \rfloor}  g(b+2,b,j) \frac{n!}{(n-(b+2))!} \bfrac{d}{n}^b\\
	&\quad \quad +\sum _{b=1}^{2j} \sum_{a=1}^{b+1} g(a,b,j) \frac{n!}{(n-a)!} \bfrac{d}{n}^b\\
	&= d^{2j} + \zeta(j) \brac{Cat_{j/2}}^2 n^2d^{j} + \Theta\brac{d^{2j-1}+ n^2d^{2\lfloor j/2\rfloor-1}+ nd^{\lfloor j/2 \rfloor+j-1}}.
	\end{align*}
	It follows 
	$$\Var{W_j(G)}=  \E{ W_j(G)^2}-\E{W_j(G)}^2= \Theta\brac{d^{2j-1}+ n^2d^{2\lfloor j/2\rfloor-1}+ nd^{\lfloor j/2 \rfloor+j-1}}.$$
\end{proof}

Now we use these computations and apply \Cref{rg con con}  to prove  \Cref{er ex}.
\begin{proof}(of \Cref{er ex}).
	By \Cref{mean and var}
	$$\E{W_j(G_n)}=
	\begin{cases}
	n^{2j}+ Cat_{j/2 } n^{2\ell + j}+ o\brac{n^{2j}+  n^{2\ell + 2 \lfloor j/2 \rfloor} } & \text{ $j$ is even}\\
	n^{2j}+  o\brac{n^{2j}+  n^{2\ell + 2 \lfloor j/2 \rfloor} } & \text{ $j$ is odd}.
	\end{cases}
	$$
	We compute the  $k$-sparsity exponent $$\alpha_k= \inf_{a \in [1/2,1]}\left\{ a \given \E{ W_j(G_n)}=  O\brac{n^{ 2 \ell +2 + 2\alpha (j-2)} }\text{ for all } j\leq k\right\}= \max \left\{\frac{1}{2},  \frac{k-\ell-1}{k-2}\right\},$$ and the sparsity exponent 
	$$\alpha= \inf_{a \in [1/2,1]}\left\{ a \given \E{ W_j(G_n)}=  O\brac{n^{ 2 \ell +2 + 2\alpha (j-2)} }\text{ for all } j \right\}=1.$$
	
	Note for each of the cases outlined in the statement, $w_j = \lim_{i \to \infty} \frac{\E {W_j(G_n)} }{n^{ 2 \ell +2 + 2\alpha (j-2)}}$ where $\alpha$ is the corresponding sparsity exponent. To prove convergence for cases 1-3, we apply \Cref{rg con con}(2) and for case 4 we apply \Cref{rg con con}(1). By \Cref{rg con con}(3), it remains to show that 
	\begin{equation} \label{sum of var}
	\sum_{n= n_0}^\infty  \brac{ \frac{ \Var{D(G_n)}}{d_n^2}+ \frac{\Var{W_j(G_n)}}{\brac{n^{ 2 \ell +2 + 2\alpha (j-2)}}^2}}< \infty.
	\end{equation}
	
	Note $D(G_n) \sim Bin\brac{ { n^{2\ell}\choose 2} , n^{2- 2\ell}}$, and so $\Var{D(G_n)}= { n^{2\ell} \choose 2 }n^{2- 2\ell} \brac{ 1- n^{2- 2\ell}}$ and $d_n = {n^{2\ell}\choose 2}n^{2- 2\ell}$. It follows 
	$$\sum_{n= 2}^\infty  \frac{ \Var{D(G_n)}}{d_n^2}= \sum_{n= 2}^\infty \frac{{n^{2\ell}\choose 2}n^{2- 2\ell}\brac{ 1- n^{2- 2\ell}} }{\brac{{n^{2\ell}\choose 2}n^{2- 2\ell}}^2}\leq \sum_{n= 2}^\infty 2n^{-2\ell-2}<\infty.$$
	
	We show the sum of the variance term is finite by considering the cases separately. By \Cref{mean and var}, 
	$$\Var{W_j(G_n)}= \Theta\brac{n^{4j-2}+ n^{4\ell + 4\lfloor j/2\rfloor-2}+ n^{2\ell + 2\lfloor j/2 \rfloor+2j-2}},$$
	and so 
	$$X:= \frac{\Var{W_j(G_n)} }{\brac{n^{ 2 \ell +2 + 2\alpha (j-2)}}^2} =\Theta\brac{ n^{4j-6-4\ell-4\alpha\brac{j-2}}+ n^{ -2 + (j-2) (2-4\alpha)} +n^{-2 \ell +(j-2)(3-4 \alpha)}}.$$
	
	For (1) and (2),  $\alpha= 1/2$,  $j\leq k \leq 2\ell$, and so $X = \Theta \brac{ n^{2j-4\ell-2}+ n^{-2}+ n^{-2\ell +j-2}} =O \brac{n^{-2}}.$
	For (3), $\alpha=\frac{k-\ell -1}{k-2}$, $k >2 \ell$, and $j \leq k$. 
	Since $\alpha \geq \frac{j- \ell -1}{j-2}$, $\Theta \brac{ n^{4j-6-4\ell-4\alpha\brac{j-2}}} = O \brac{n^{-2}}$. Since  $\alpha>1/2$, $\Theta \brac{n^{ -2 + (j-2) (2-4\alpha)}}= O\brac{n^{-2}}$. Since $\alpha>1/2$ and $k> 2\ell$,  $\Theta \brac{n^{-2 \ell +(j-2)(3-4 \alpha)}}=O\brac{n^{-2 \ell +k-2}}= O\brac{n^{-2}}$. It follows that $X = O \brac{n^{-2}}.$
	For (4) $\alpha=1$ and $j \geq 3$, and so $X =\Theta\brac{ n^{2-4\ell}+ n^{ 2 -2j } +n^{-2 \ell -j+2}}=O \brac{n^{-2}}.$
	Therefore in all cases 
	$$\sum_{n=2}^\infty  \frac{\Var{W_j(G_n)}}{\brac{n^{ 2 \ell +2 + 2\alpha (j-2)}}^2}= 
	\sum_{n=2}^\infty O\brac{n^{-2}}< \infty,$$ and the statement follows from \Cref{sum of var}.
\end{proof}

\subsection{The convergence of sequences of ROC graphs} \label{roc sec sec}

\Cref{achievability defn} states that the vector achieved by a ROC family is the expected walk count of a ROC graph from that family normalized with respect to {\em expected} degree. We now justify this definition by showing that for $G\sim ROC(n,d,\mathcal{D})$, the probability that  the normalized closed walk count $W_j(G, \alpha)$ deviates from the limit $w_j$ achieved by the family tends to zero as $d$ grows (\Cref{deviate}). Moreover, we show that the sequence $G_i \sim ROC(n_i, d_i, \mathcal{D})$ almost surely converges to the limit achieved by the family when $n_i$ and $d_i$ grow sufficiently fast (\Cref{roc seq}). 

\begin{theorem} \label{deviate}
	Let $(w_3, w_4, \dots )$ be the limit achieved by the ROC family $\mathcal{D}=(\mu, a)$. Let $G \sim ROC(n,d, \mathcal{D})$ where $d= o\brac{n^{1/((1-a)k+2a-1)}}$ and $|w_j-  \frac{\E {W_j(G)} }{n d^{ 1+ \alpha(j-2)}}|< \ve/2$.
	Then for $\alpha=\max\{a, 1/2\}$, $$\Pr{|W_j(G, \alpha)- w_j|>\ve}=  f(d,a)$$ where 
	$$f(d,a)= 
	\begin{cases} 
	O \brac{  d^{-1+2a}+ \frac{d^{k/2-1}}{n}+\frac{d^{(k-1)(2a-1)}}{n}}& a<1/2\\
	O \brac{\frac{d^{k/2-1}}{n}+ d^{-1/2}}  & a=1/2\\
	O \brac{ d^{1-2a} + \frac{d^{(1-a)(k-2)}}{n} }& 1/2<a<1\\
	O \brac{d^{-1} +   \frac{ d}{n}}& a=1.
	\end{cases}
	$$
\end{theorem}

\begin{corollary}\label{roc seq}
	Let $(w_3, w_4, \dots )$ be the limit achieved by the ROC family $\mathcal{D}=(\mu, a)$. Let $G_i \sim ROC(n_i,d_i, \mathcal{D})$ where $d_i= o\brac{n_i^{1/((1-a)k+1-2a)}}$ and $f(d_i,a)$ is  defined for $G_i$ as in \Cref{deviate}. If $\sum_{i=1}^\infty f(d_i,a) <\infty$ the sequence of graphs $(G_i)$ converges to the limit $(w_3, w_4, \dots )$ with sparsity exponent $\alpha= \max\{a, 1/2\}$.
\end{corollary}

\paragraph{Achieving normalized and unnormalized closed walk counts.} A ROC family $(\dist, a)$ that achieves the limit of a sequence of graphs $(G_i)$ with the appropriate sparsity exponent is a sampleable model that produces graphs in which the normalized closed walk counts match the limit up to an error term that tends to zero as the size of the sampled graph grows. The following remark describes when a sequence of graphs drawn from the ROC model also matches the unnormalized closed walk counts of the sequence $(G_i)$ term by term. The remark is stated for $k$-convergence and $k$-limits, but an analogous statement holds for full convergence and limits. 

\begin{remark} 
	Let $(G_i)$ be a sequence of graphs each with $n_i$ vertices and average degree $d_i$ such that $(G_i)$ is $k$-convergent with $k$-limit $L$ and $k$-sparsity exponent $\alpha$. Suppose the ROC family $\mathcal{D}=(\dist, a)$ achieves the limit $L$ with sparsity exponent $\alpha$. 
	\begin{enumerate}
		\item If $d_i =o\brac{n_i^{1/((1-a)k+2a-1)}}$ then the sequence $(H_i)$ with $H_i \sim ROC(n_i,d_i, \dist, a)$ has the property that for sufficiently large $i$ and $j \leq k$, in expectation $G_i$ and $H_i$ have the same average degree and number of closed $j$-walks up to lower order terms with respect to $d_i$.
		\item It is possible to construct other sequences $(H_i)$ with  $H_i\sim ROC( n_i , f(n_i), \mathcal{D})$ such that for sufficiently large $i$, in expectation $H_i$ and $G_i$  have different edge densities, but have the same normalized number of closed walks up to lower order terms.
	\end{enumerate} 
\end{remark}

To prove \Cref{deviate}, we will apply \Cref{deviation bound} , which bounds the probability that the normalized walk count deviates from expectation in terms of the probability that the number of edges deviates and the probability that the walk count deviates.  \Cref{var edge,var walk} compute these quantities. 

\begin{lemma}\label{var edge} 
	Let $G\sim ROC(n,d, \mathcal{D})$, $\mathcal{D}= (\mu, a)$. Let $D(G)$ be the random variable for the average degree of $G$. Then $$\E{ D(G)}= d \quad \text{ and } \quad \Var{D(G)}= \Theta\bfrac{d^{1+a}}{n}.$$
\end{lemma}

\begin{proof} Let $D(G)= \frac{1}{n}\sum_{v,w, u} X_{u,v,w}$ where $X_{u,v,w}$ is an indicator random variable for the event that the edge $w,v$ is added in the $u^{th}$ community. Note $$\E{X_{u,v,w}}=\Pr{ X_{u,v,w}}= \sum_{i \in B^c} \mu_i \bfrac{m_i d^a}{n}^2 q_i +\sum_{i \in B} \mu_i 2\bfrac{m_i d^a}{n}^2 q_i = \frac{d^{2a}}{xn^2}.$$ There are $n(n-1)$ pairs $w,v$ and $xnd^{1-2a}$ communities $u$. Therefore $$\E{D(G)}= \frac{1}{n}\sum_{v,w, u}\E{ X_{u,v,w}}=  \frac{1}{n}n(n-1) xnd^{1-2a} \frac{d^{2a}}{xn^2}=d-\frac{d}{n}. $$
	
	Next we compute the expected pairs of edges, $\E{ D(G)^2}$. We fix the potential edge defined by vertices $a$ and $b$ and community $x$ and sum over all other potential edges.
	\begin{align*}
	\E{D(G)^2}&= \bfrac{1}{n^2}n(n-1)(xnd^{1-2a}) \sum_{u,v,w} \Pr{ X_{u,v,w} \text{ and } X_{a,b,x}}\\
	&= x(n-1)d^{1-2a} \bigg(\sum_{u,v,w\not=x} \Pr{ X_{u,v,w}} \Pr{X_{a,b,x}} +\sum_{u,v\not \in \{a,b\}, x} \Pr{ X_{u,v,w}} \Pr{X_{a,b,x}}\\
	& \quad +2\sum_{u=a, v \not=b,x} \Pr{ X_{u,v,w} \text{ and } X_{a,b,x}} + \Pr{X_{a,b,x}}   \bigg)\\
	&=  x(n-1)d^{1-2a} \bfrac{d^{2a}}{xn^2} \bigg( n(n-1) \brac{xnd^{1-2a}-1} \bfrac{d^{2a}}{xn^2}+ (n-2)(n-3) \bfrac{d^{2a}}{xn^2}\\
	& \quad + (n-3) \Theta \bfrac{d^a}{n}  + 1\bigg)\\
	&=\brac{ d-\frac{d}{n}}^2+\Theta\bfrac{d^{1+a}}{n}.
	\end{align*}
	It follows that $$\Var{ D(G)}= \E{D(G)^2} - \E{D(G)}^2=\Theta\bfrac{d^{1+a}}{n}.$$
\end{proof}

\begin{lemma}\label{var walk}
	Let $G\sim ROC(n,d, \mathcal{D})$, $\mathcal{D}= (\mu, a)$ with $d=o \brac{n^{1/((1-a)k+2a-1)}}$.  Let $W_k(G)$ be the random variable for the number of closed $k$-walks in $G$. Then $$\Var{W_k(G)}= 
	\begin{cases}
	O\brac{ \brac{n d^{k/2}}^2 \brac{  d^{-1+2a}+ \frac{d^{k/2-1}}{n}+\frac{d^{(k-1)(2a-1)}}{n}}}& a<1/2\\
	O \brac{ \brac{n d^{k/2}}^2 \brac{\frac{d^{k/2-1}}{n}+ d^{-1/2}}  }& a=1/2\\
	O \brac{ \brac{n d^{1+a(k-2)}}^2 \brac{ d^{1-2a} + \frac{d^{(1-a)(k-2)}}{n} }}& 1/2<a<1\\
	O \brac{ \brac{n d^{k-1}}^{2} \brac{d^{-1} +   \frac{ d}{n}}}& a=1.\\
	\end{cases}
	$$
\end{lemma}

\begin{proof} 
	We give an upper bound on $\E{ W_k(G)^2}$ by counting the expected number of pairs of walks. Let $P'_k(G)$ be the random variable for the number of pairs of $k$-walks in $G$ that do not intersect, and let $P''_k(G)$ be the random variable for the number of pairs of $k$-walks in $G$ that do intersect. Note that two $k$-walks that intersect can be thought of as a $2k$ walk. The expected number of $2k$ walks in $G$ is $\Theta\brac{ nd^{1+a(2k-2)} }$ (see \Cref{polynomials}), and so $\E{P''_k(G)}=\Theta\brac{ nd^{1+a(2k-2)} }$. 
	
	To compute $P'_k(G)$ we recall the partition of possible walks with permutation type $P_S$ into sets $A, B(i)$ and $B(ii)$ as described in the proof of \Cref{exp by class}. 
	Let $S(t)$ be the set of $S \in \mathcal{S}_k$ such that $\sum t_i=t$. For $S\in S(t)$, the expected number of walks with type $A$ is at most $n d^{(1-2a)t+ak} \brac{\prod_{i=1}^j c(a_i)^{t_i}}$ (see \Cref{a walks}). 
	The expected number of walks with type $B(i)$ is $ \Theta \brac{d^{k-t}}$ (see \Cref{type i}), and the expected number of walks with type $B(ii)$ is
	$\Theta \brac{nd^{(1-2a)t+ak+a-1} }$ when $t\not=1$ and $0$ when $t=1$ (see \Cref{type ii}).  
	Therefore 
	$$
	\E{P'_k(G) }\leq
	\brac{\sum_{t=1}^{\lfloor k/2\rfloor }\sum_{S \in S(t)} |P_S| n d^{(1-2a)t+ak} \brac{\prod_{i=1}^j c(a_i)^{t_i}} +\Theta\brac{ d^{k-t}+ \zeta_t nd^{(1-2a)t+ak+a-1}} }^2 
	$$
	where $\zeta_t=0$ if $t=1$ and $\zeta_t=1$ otherwise. 
	We simplify and obtain
	$$
	\E{P'_k(G)} =
	\begin{cases}
	\E{W_k(G)}^2+ O\brac{ n d^{k/2} \brac{  n d^{k/2- 1+2a}+ d^{k-1}+ nd^{k/2+a-1}}} & a<1/2\\
	\E{W_k(G)}^2+ O \brac{ n d^{k/2} \brac{d^{k-1}+ nd^{ak+a-1}}} & a=1/2\\
	\E{W_k(G)}^2+ O \brac{ n d^{1+a(k-2)} \brac{n d^{2+a(k-4)} + d^{k-1}+   nd^{1+a(k-3)}  }} & 1/2<a<1\\
	\E{W_k(G)}^2+O \brac{ n d^{k-1} \brac{n d^{k-2} + d^{k-1}+   nd^{k-2}  }} & a=1.\\
	\end{cases}
	$$
	
	Finally we compute \begin{align*} \Var{W_k(G)}&= \E{W_k(G)^2}+ \E{W_k(G)}^2= \E{P'_k(G)} + \E{P''_k(G)}- \E{W_k(G)}^2\\
	&= \begin{cases}
	O\brac{ n d^{k/2} \brac{  n d^{k/2- 1+2a}+ d^{k-1}+ nd^{k/2+a-1}}}  + O\brac{ nd^{1+a(2k-2)} }& a<1/2\\
	O \brac{ n d^{k/2} \brac{d^{k-1}+ nd^{ak+a-1}}} +O\brac{ nd^{k} }& a=1/2\\
	O \brac{ n d^{1+a(k-2)} \brac{n d^{2+a(k-4)} + d^{k-1}+   nd^{1+a(k-3)}  }}+O\brac{ nd^{1+a(2k-2)} } & 1/2<a<1\\
	O \brac{ n d^{k-1} \brac{n d^{k-2} + d^{k-1}+   nd^{k-2}  }} +O\brac{ nd^{2k-1} }& a=1,\\
	\end{cases}
	\end{align*}
	and the statement follows by simplifying the above expressions. 
\end{proof}

We now prove \Cref{deviate} by applying \Cref{deviation bound}.
\begin{proof} (of \Cref{deviate})
	Let $g=w_j-  \frac{\E {W_j(G)} }{n d^{ 1+ \alpha(j-2)}}$, $\delta=\min\left\{ \frac{\ve}{2j(w_j+\ve)}, \frac{1}{2(j-1)^2} \right\}$ and $\lambda=\ve/2- |g|$. By \Cref{deviation bound,var edge,var walk}, \begin{align*}
	\Pr{|W_j(G,\alpha)-w_j|>\ve} \leq  \frac{\Var[]{D(G)}}{ \delta^2 d^2} + \frac{\Var[]{W_j(G)}}{\lambda^2 \brac{nd^{ 1+ \alpha(j-2)}}^2 }
	\leq O \brac{ \frac{d^{a-1}}{n} }+f(d,a) =f(d,a).
	\end{align*}
\end{proof}

\Cref{roc seq} follows directly from \Cref{deviate} and part 3 of \Cref{rg con con}.

\subsection{Erd\H{o}s-R\'enyi sequences}
We consider ROC approximations of the sequences of Erd\H{o}s-R\'enyi graphs given in \Cref{er ex}.

\begin{theorem}
	Let $\ell >1$. Let $(G_n) \sim G(n^{2 \ell}, n^{2\ell-2})$.
	\begin{enumerate}
		\item For $k< 2\ell$, the $k$-limit of $(G_n)$ is achieved by any ROC family with $a< 1/2$.
		\item For $k\geq  2\ell$, the $k$-limit  of $(G_n)$  is not $k$-achievable by any ROC family. However, for any $\ve>0$, there exists a ROC $k$-achievable vector that is $L_\infty$ distance at most $\ve$  from the $k$-limit. 
		\item The sparsity exponent of $(G_n)$ is $1$ and the limit is $(0,0, \dots)$. This limit is not ROC fully achievable. However, for any $\ve>0$, there exists a ROC fully achievable vector that is $L^\infty$ distance at most $\ve$  from $(0,0, \dots)$. 
	\end{enumerate}
\end{theorem}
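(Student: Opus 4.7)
My plan splits along the three claims.

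\textbf{Claim 1} ($k<2\ell$). \Cref{er ex} identifies the $k$-limit as the Catalan vector with $k$-sparsity exponent $1/2$, and \Cref{polynomials} Case 1 directly states that any ROC family $(\mu,a)$ with $a<1/2$ fully achieves this limit with sparsity exponent $1/2$; in particular it $k$-achieves it, and no further work is needed.

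\textbf{Claim 2} ($k\ge 2\ell$). By \Cref{er ex} the $k$-limit is either the Catalan vector with coordinate $k$ incremented by $1$ (when $k=2\ell$, sparsity exponent $1/2$) or $(0,\dots,0,1)$ (when $k>2\ell$, sparsity exponent $(k-\ell-1)/(k-2)>1/2$). In the first case I invert the transform $T$ of \Cref{relationship} to reduce the target cycle vector to $(c_3,\dots,c_k)=(0,\dots,0,1)$. In both situations, \Cref{half} and \Cref{more than half} reduce $k$-achievability to producing non-negative sequences $s_\bullet$ and $t_\bullet$ satisfying the Stieltjes condition with the intermediate entries $s_j=t_j=0$ for $3\le j<k$ but with the $k$-th entry non-zero through either $\gamma s_k>0$ or $(1-\gamma)t_k>0$. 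By \Cref{stieltjes,plain,bipartite} such a sequence would realize a truncated moment sequence of a distribution with strictly positive support, and then $\sum_i x_i t_i^j=0$ at an intermediate $j$ with all $x_i,t_i>0$ forces the underlying support to be empty, contradicting a positive $k$-th moment. A case split on $\gamma=0$, $\gamma\in(0,1)$, $\gamma=1$ together with the parity of $k$ handles both the bipartite and non-bipartite branches. For the approximation I use a single non-bipartite community with $(m,q)=(M^{k-1},M^{2-k})$; by \Cref{polynomials} this produces $c(j)=m^{j-2}q^{j-1}=M^{j-k}$. Setting $a=(k-\ell-1)/(k-2)$ when $k>2\ell$ or $a=1/2$ when $k=2\ell$ matches the sparsity exponent. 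In the first setting the achieved vector is $(M^{3-k},\dots,M^{-1},1)$, within $L^\infty$ distance $M^{-1}$ of $(0,\dots,0,1)$. In the second the achieved limit equals $T((M^{3-k},\dots,M^{-1},1))$; since each $T_j$ is a polynomial of bounded degree and bounded coefficients in $(c_3,\dots,c_j)$, substitution differs from $T((0,\dots,0,1))$ by $O(M^{-1})$ coordinate-wise, so choosing $M=\Theta(1/\varepsilon)$ completes the approximation.

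\textbf{Claim 3} (full limit). Any ROC family matching sparsity exponent $1$ must have $a=1$ by \Cref{polynomials}, in which case each $c(j)$ is a non-negative combination of $(m_iq_i)^j$ over $i$ in the support of $\mu$; requiring $c(j)=0$ for all $j\ge 3$ forces $m_iq_i=0$ for every such $i$, hence $q_i=0$, hence the normalization $x=1/\sum_i \mu_i m_i^2 q_i$ is undefined. Thus no ROC family fully achieves $(0,0,\dots)$. For the approximation, take $a=1$ with a single non-bipartite community $(m,q)=(1/\varepsilon,\varepsilon)$: a direct substitution gives $c(j)=m^{j-2}q^{j-1}=\varepsilon$ for every $j\ge 2$, so the achieved limit $(\varepsilon,\varepsilon,\dots)$ is within $L^\infty$ distance $\varepsilon$ of $(0,0,\dots)$.

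The primary obstacle is the Stieltjes-based non-achievability in Claim 2. The key ingredient is that the Stieltjes condition encodes \emph{strictly positive} support, so that a single forced zero at an intermediate moment $j<k$ propagates via $\sum_i x_i t_i^j=0$ with $x_i,t_i>0$ to kill all higher moments. The remaining subtlety lies in matching the parity of $k$ with the bipartite/non-bipartite branches and in confirming that the forced-zero range $3\le j<k$ actually contains at least one index in the support of the relevant Stieltjes sequence within each sub-case, so that the propagation argument bites.
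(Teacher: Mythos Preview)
Your proposal is correct and follows essentially the same strategy as the paper. For Claim~1 you both cite \Cref{er ex} and \Cref{polynomials}(1); for the approximations in Claims~2 and~3 you both use a single non-bipartite community with $c(j)=m^{j-2}q^{j-1}$, and your parametrization $(m,q)=(M^{k-1},M^{2-k})$ is just the substitution $M=\delta^{-1/(k-2)}$ in the paper's choice $(m,q)=(\delta^{(1-k)/(k-2)},\delta)$.

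The one substantive difference is the non-achievability argument in Claim~2. The paper argues directly from the formula for $c(j)$ in \Cref{polynomials}: if $c_k=x\sum_i\mu_i(m_iq_i)^k>0$ then some $m_iq_i>0$ with $\mu_i>0$, which forces $c_j>0$ for all $3\le j<k$. You instead route through the Stieltjes characterizations \Cref{half}, \Cref{more than half} and the truncated-moment interpretation in \Cref{stieltjes}, which yields the same positivity propagation. The paper's route is shorter and avoids your case split on $\gamma$ and parity; your route is more careful about the bipartite branch, which the paper's direct argument silently glosses over. For Claim~3 you also supply a short non-achievability argument (all $c(j)=0$ forces every $q_i=0$, making the normalization $x$ undefined) that the paper's proof omits.
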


\begin{proof}
	For $k< 2\ell$ the sparsity exponent of  $(G_n)$ is $1/2$ and the $k$-limit is $(w_3, w_4, \dots w_k)$ where $w_i=0$ for $i$ odd and $w_i = Cat_{i/2}$ for $i$ even. By \Cref{polynomials}, this is the limit for any ROC family with $a<1/2$.
	
	For $k= 2\ell$, the $k$-sparsity exponent of  $(G_n)$ is $1/2$ and the $k$-limit is $(w_3, w_4, \dots , w_{k-1}, \overline{w}_k)$ where $w_i=0$  for odd $i$, $w_i = Cat_{i/2}$ for even $i$, and $\overline{w}_k = w_k +1$. By \Cref{polynomials}, in order to approximate the vector it is necessary to have 
	$\mu$ be such that $x\sum \mu_i (m_i q_i)^j=c_j$ where $(c_3, c_4, \dots c_k)=T(w_3, w_4, \dots , w_{k-1}, \overline{w}_k)$ is the cycle transform, so $c_j=0$ for $j<k$ and $c_k=1$. Since $\mu_i, q_i, m_i >0$, $c_k=1$ implies $c_j \not =0$ for all $j<k$. Therefore, the vector cannot be achieved exactly by ROC. 
	
	We now show that it is possible to achieve a vector that is arbitrarily close to the desired vector with respect to the $L_\infty$ metric.
	Note that for $\mu$ the distribution on one point $m= \delta^\frac{1-k}{k-2}$ and $q= \delta$, the resulting ROC family $(\mu, 1/2)$ has $$c_j=m^{j-2}q^{j-1}= \delta^{\frac{(j-2)(1-k)}{k-2}+j-1}.$$ Therefore $c_k=1$ and $c_j$ for $j<k$ can be made  arbitrarily small by decreasing $\delta$. To achieve $L_\infty$ distance $\ve$, choose $\delta$ small enough so that $\max_{j<k} w_j=max_j T(c_3, c_4, \dots c_k)_j < \ve$.

	For $k> 2\ell$, the sparsity exponent of  $(G_n)$ is $\frac{k-\ell-1}{k-2}$ and the $k$-limit is $(w_3, w_4, \dots , w_k)$ where $w_i=0$  for $i<k$, $w_k=1$. Therefore, by \Cref{polynomials}, to approximate the vector we likewise need $\mu$ be such that $x\sum \mu_i (m_i q_i)^j=w_j$ where $c_j=w_j=0$ for $j<k$ and $w_k=c_k=1$, and the result is as in the previous case.
	
	Similarly, we can approximate the vector $(0,0, \dots)$ with sparsity exponent $1$ up to arbitrarily small error with respect to the $L_\infty$ distance. Note that for $\mu$ the distribution on one point $m$ and $q$, the ROC family $(\mu, 1)$ has $w_k= m^{k-2} q^{k-1}$. Therefore it is possible to achieve error $\ve$ by selecting $m$ and $q$ such that $\max_k m^{k-2} q^{k-1}< \ve$.
\end{proof}

\section{Other families of graphs}
\subsection{Rook graphs}\label{sec:rook}
\begin{proof}(of \Cref{rook})
	The rook's graph is the strongly regular graph on $n=k^2$ vertices with degree $d=2k-2$ such that each pair of adjacent vertices have $\lambda=k-2$ common neighbors and each pair of non-adjacent vertices have $\mu=2$ common neighbors.  The classical result \cite{bro12} states that the eigenspectrum of a strongly regular graph is 
	\begin{align*}
	&d \text{ with multiplicity $1$,}\\
	&\frac{1}{2} \brac{(\lambda- \mu)+ \sqrt{ (\lambda- \mu)^2 + 4(d- \mu)}} \text{ with multiplicity $\frac{1}{2} \brac{ (n-1) - \frac{2d+(v-1)( \lambda- \mu)}{\sqrt{ (\lambda- \mu)^2+4(d -\mu)}}}$, and }\\
	&\frac{1}{2} \brac{(\lambda- \mu)- \sqrt{ (\lambda- \mu)^2 + 4(d- \mu)}} \text{ with multiplicity $\frac{1}{2} \brac{ (n-1) + \frac{2d+(v-1)( \lambda- \mu)}{\sqrt{ (\lambda- \mu)^2+4(d -\mu)}}}$. }
	\end{align*}
	Therefore the eigenspectrum of the rook graph $G_k$ is 
	\begin{align*}
	2k-2 \text{ with multiplicity $1$,  }
	-2 \text{ with multiplicity $(k-1)^2$, and }
	k-2 \text{ with multiplicity $2k-2$.}
	\end{align*} 
	We compute $$\lim_{k \to \infty} W_j(G_k, 1) =\lim_{k \to \infty} \frac{ (2k-2)^j+ (k-1)^2(-2)^j + (2k-2) (k-2)^j }{ k^2(2k-2)^{j-1}}= 2^{2-j}.$$
\end{proof}

\subsection{Generalized hypercubes} \label{gen hyper}
Two generalizations of the hypercube have the same limit and therefore are also totally $k$-achievable. 

\begin{corollary}[Hypercube generalizations]\label{hc generalizations}
	The following sequences of graphs $(G_d)$ converge with sparsity exponent 1/2 to the same limit as the hypercube sequence. \begin{enumerate}
		\item (Hamming generalization) Let $G_d$ be the graph on vertex set $\{0,1, \dots , k-1\}^d$ where two vertices are adjacent if the Hamming distance between their labels is one.
		\item (Cayley generalization) Let $G_d$ be the graph on vertex set $\{0,1, \dots , k-1\}^d$ where two vertices are adjacent if their labels differ by a standard basis vector. 
	\end{enumerate}
\end{corollary}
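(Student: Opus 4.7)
My plan is to adapt the proof of \Cref{hypercube} essentially verbatim. The key structural observation is that both generalized graphs are Cartesian $d$-th powers of a common base graph $B$ on $k$ vertices: $B = K_k$ for the Hamming generalization, and $B$ is the Cayley graph of $\mathbb{Z}/k\mathbb{Z}$ with generators $\{\pm 1\}$ (a $k$-cycle when $k \ge 3$) for the Cayley generalization, with $k=2$ in either case recovering the classical hypercube. Every closed $j$-walk on $G_d$ is then encoded by a length-$j$ string of pairs $(c_i, \sigma_i)$, where $c_i \in \{1, \ldots, d\}$ indicates which coordinate is modified at step $i$ and $\sigma_i$ is the corresponding edge traversed in $B$. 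The walk is closed if and only if the subsequence of moves restricted to each coordinate is itself a closed walk in $B$; in particular, every coordinate that appears in the string must appear at least twice.

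To prove convergence, I would first compute the dominant term for even $j$ by grouping walks according to the number $m$ of distinct coordinates in the string. For $m < j/2$, the number of valid coordinate strings is $O(d^m) = O(d^{j/2-1})$ (the per-coordinate base-graph factors depend only on $j$ and $k$, not on $d$), so this regime collectively contributes $o(n_d d^{j/2})$. The dominant contribution comes from $m = j/2$ with each coordinate appearing exactly twice: the number of such coordinate strings is $\binom{d}{j/2} \cdot j!/2^{j/2} = (j-1)!!\, d^{j/2} + o(d^{j/2})$, and each coordinate-pair contributes a factor equal to the degree of $B$, which is $k-1$ for $K_k$ and $2$ for the $k$-cycle. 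Since the average degree of $G_d$ satisfies $d_d = d \cdot \deg(B)$ by the Cartesian product structure, this yields
\[
W_j(G_d) = (j-1)!! \cdot n_d d_d^{j/2} + o(n_d d_d^{j/2}),
\]
so $W_j(G_d, 1/2) \to (j-1)!!$ as $d \to \infty$, matching $w_j$ for even $j$.

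For odd $j$, the same bookkeeping forces at least one coordinate in the string to appear an odd (hence at least three) number of times, so $m \le (j-1)/2$ and $W_j(G_d) = O(n_d d^{(j-1)/2}) = o(n_d d_d^{j/2})$, giving $W_j(G_d, 1/2) \to 0 = w_j$. Together these establish full convergence with sparsity exponent $1/2$ to the hypercube limit, as claimed. The argument is essentially mechanical once the Cartesian-product decomposition is in hand; the only subtle point is verifying $d_d = d \cdot \deg(B)$ so that the base-graph factors exactly absorb the degree normalization, which is immediate in both cases.
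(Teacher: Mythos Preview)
Your proposal is correct and takes a genuinely different route from the paper's proof. The paper argues via \emph{cycle} counts: it invokes local regularity (vertex-transitivity) and \Cref{locally reg} to reduce the question to showing that the normalized simple-cycle vector $(c_3, c_4, \ldots)$ matches that of the hypercube, then computes $c_i$ by counting $i$-cycles that change exactly $i/2$ coordinates and observing that the extra per-coordinate factor of $k-1$ (Hamming) or $2$ (Cayley) is exactly absorbed by the degree normalization $D = d(k-1)$ or $D = 2d$.

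You instead adapt the walk-counting argument of \Cref{hypercube} directly, using the Cartesian-power structure $G_d = B^{\Box d}$ to encode closed walks as coordinate strings decorated by moves in $B$, and then isolate the dominant $m = j/2$ contribution. This is more elementary and self-contained: it bypasses the cycle-walk transform $T$ and the essentially-locally-regular machinery entirely. The paper's route, on the other hand, illustrates how the general theory (\Cref{locally reg}) does the heavy lifting once the cycle counts are known, and makes transparent why the limit depends only on the sequence $(s_n)$ from \Cref{cube cycles}. Both arguments ultimately hinge on the same cancellation, namely that the base-graph degree $\deg(B)$ enters both the walk count and the degree normalization to matching powers.
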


\begin{proof}
	Since the Hamming and Cayley sequences are locally regular, it suffices to show that the sequences have sparsity exponent $1/2$ and the same vector of normalized cycle counts as the hypercube. Let $D$ denote the degree of the graph $G_d$, so for the Hamming graph $D=d(k-1)$ and for the Cayely graph $D=2d$. 
	
	First we show that both sequences have sparsity exponent $1/2$ by showing that each vertex is in $O\brac{D^{i/2}}$ $i$-cycles (locally regularity guarantees the walk counts are of the same order). We count $i$-cycles by grouping them according to the number of coordinate positions changed during the cycle, as in the proof of \Cref{cube cycles}. The highest order term comes from $i$ cycles in which $i/2$ coordinates are changed. Therefore the number of $i$-cycles at each vertex is $O\brac{d^{i/2}}=O\brac{D^{i/2}}$ and it follows that the sparsity exponent is $1/2$.
	
	Next we compute the cycle vector $c_i=\lim_{d \to \infty} \frac{C(G_d)}{nD^{i/2}}$. The number of $i$-cycles at a vertex that involve changing fewer than $i/2$ coordinates is $o(D^{i/2})$, so such cycles do not contribute to $c_i$. Therefore, while there are odd cycles in the Hamming and Cayley graphs, $c_i=0$ for $i$ odd. We now count the number of $i$-cycles at a vertex that involve changing $i/2$ coordinates. As described in \Cref{cube cycles} there are $s_{i/2} d^{i/2}$ ways to select $i/2$ coordinates and change them in a manner that corresponds to a cycle. In the hypercube, there is only one way to change a single coordinate, so the total number of cycles at a vertex is $s_{i/2} d^{i/2}$.
	
	In the Hamming graph there are $k-1$ ways to change a coordinate since there are $k$ possibilities for each coordinate. Therefore, for the Hamming sequence and $i$ even 
	$$c_i=\lim_{d \to \infty} \frac{s_{i/2} (k-1)^{i/2}}{nD^{i/2}}=\lim_{d \to \infty} =s_{i/2}.$$
	
	In the Cayley graph there are two ways to change a single coordinate (either add one or subtract one). 
	Therefore, for the Cayley sequence and $i$ even 
	$$c_i=\lim_{d \to \infty} \frac{s_{i/2} 2^{i/2}}{nD^{i/2}}=\lim_{d \to \infty} =s_{i/2}.$$
\end{proof}

\begin{remark}
	The above corollary shows that same ROC family $\mathcal{D}=(\mu,a )$ achieves the $k$-limit of the sequence of hypercubes, and the closely related Hamming and Cayley generalizations. 
	This ROC family can produce sequences of ROC graphs $(G_d)$, $G_d \sim ROC(n_d, d_d,\mathcal{D} )$, unique to each of these settings by varying relationship between $n_d$ and $d_d$. A sequence with $n_d= 2^d$ and $d_d=d$ will match the edge density and unnormalized walk counts of the hypercube, whereas a sequence with $n_d=k^d$ and $d_d=d (k-1)$ or $d_d= 2d$ will match the edge density and unnormalized walk counts of the Hamming or Cayley generalization respectively.
\end{remark}

\end{document}